\documentclass{amsart}[12pt]

\usepackage{amsfonts,amssymb,amsmath,amscd,amsthm, mathrsfs}
\usepackage{graphicx}
\usepackage{epstopdf}
\usepackage{hyperref}
\usepackage{upgreek}


\newtheorem{thm}{Theorem}[section]
\newtheorem{dfn}{Definition}[section]
\newtheorem{prop}{Proposition}[section]
\newtheorem{lm}{Lemma}[section]

\newtheorem{rem}{Remark}[section]
\newtheorem*{con}{Conjecture}
\newtheorem*{main}{Main Theorem}
\newtheorem{ex}{Example}[section]

\newcommand{\mb}[1]{\mathbb{#1}}
\newcommand{\mf}[1]{\mathfrak{#1}}
\newcommand{\mc}[1]{\mathcal{#1}}
\newcommand{\rr}{\mathbb{R}}
\newcommand{\zz}{\mathbb{Z}}
\newcommand{\cc}{\mathbb{C}}

\renewcommand{\Im}{\operatorname{Im}}

\newcommand{\al}{\alpha}

\newcommand{\dl}{\delta}

\newcommand{\ep}{\varepsilon}

\newcommand{\Lmd}{\Lambda}
\newcommand{\lmd}{\lambda}
\newcommand{\om}{\omega}
\newcommand{\Om}{\Omega}
\newcommand{\sg}{\sigma}

\newcommand{\tht}{\theta}
\newcommand{\kp}{\kappa}

\newcommand{\ey}{\frac{1}{2}}


\newcommand{\ac}{\mathcal{A}}
\newcommand{\ack}{\mathcal{A}_{\I}}
\newcommand{\kk}{K_{\I}}
\newcommand{\uk}{U_{\I}}
\newcommand{\lk}{L_{\I}}

\newcommand{\N}{\mathbf{N}}
\newcommand{\nf}{\mf{n}}
\newcommand{\I}{\mathbf{I}}

\newcommand{\se}{\frac{2}{3}}

\newcommand{\lmh}{\hat{\Lmd}}
\newcommand{\ldn}{\Lmd^{D_N}_N}
\newcommand{\hld}{\hat{\Lmd}^{D_N}_N}

\newcommand{\lo}{\Lmd^+_{\om}}
\newcommand{\hlo}{\hat{\Lmd}^+_{\om}}

\newcommand{\zd}{\dot{z}}
\newcommand{\zb}{\bar{z}}
\newcommand{\ze}{z^{\ep}}
\newcommand{\zey}{z^{\ep_1}}
\newcommand{\zel}{z^{\ep_0}}

\newcommand{\zo}{z^{\om}}
\newcommand{\ztl}{\tilde{z}}
\newcommand{\zh}{\hat{z}}

\newcommand{\xh}{\hat{x}}
\newcommand{\xd}{\dot{x}}

\newcommand{\vt}{\tilde{v}}
\newcommand{\vte}{\tilde{v}^{\ep}}
\newcommand{\sgt}{\tilde{\sg}}

\newcommand{\lmn}{\lmd_n}

\begin{document}

	\title{Simple choreographies of the planar Newtonian $N$-body Problem}
	\author{Guowei Yu}
	\email{yu@math.utoronto.ca}
	\date{08/29/2016}
	
	\address{Department of Mathematics, University of Toronto}
	
	\begin{abstract} 
	      In the $N$-body problem, a simple choreography is a periodic solution, where all masses chase each other on a single loop. In this paper we prove that for the planar Newtonian $N$-body problem with equal masses, $N \ge 3$, there are at least $2^{N-3} + 2^{[(N-3)/2]}$ different main simple choreographies. This confirms a conjecture given by Chenciner and etc. in \cite{CGMS02}. 
	\end{abstract}
	
	\maketitle
	
\section{Introduction} \label{sec intro}

The Newtonian $N$-body problem describes the motion of $N$ point masses under the attraction of each other according to Newton's gravitational law. When all the masses are equal, there exist periodic solutions, where all the masses travel on a single loop (it is still an open problem whether such a solution exists when the masses are unequal \cite{C04}; throughout the paper we assume all the masses are equal and $N \ge 3$). Such solutions usually satisfy certain symmetric constraints, as if they are dancing according to certain choreographies. This inspired Carles Sim\'o to name them \emph{simple choreographies}. 

A trivial example of simple choreographies is the \emph{rotating $N$-gon}, where the $N$ point masses form a regular $N$-gon at each moment and rotate rigidly around the center of mass at a constant angular velocity. The first non-trivial simple choreography is the now famous \emph{Figure-Eight} solution, which was discovered numerically by Moore \cite{Mr93}, and then independently and rigorously proved by Chenciner and Montgomery \cite{CM00}. This remarkable solution immediately caught a lot of attention and many simple choreographies were found numerically: the \emph{Super-Eight} solution of the $4$-body problem by Gerver and several families of simple choreographies for different values of $N$ by Sim\'o (\cite{CGMS02}, \cite{Si00}). Some more recently numerical discoveries can be found in \cite{MS13}. 

On the other hand to rigorously prove their existence is a much harder task. Following \cite{CM00}, the idea is to find a simple choreography as a minimizer of the Lagrangian action functional among loops satisfying certain constraints. For Newtonian potential, as already noticed by Poincar\'e, when two or more bodies collide, the action functional may still be finite. As a result, the desired minimizer may contain collision, which prevents it to be a real solution. This is the main obstacle to apply variational methods to the Newtonian $N$-body problem. 

A lot of progresses have been made to overcome this difficulty since the proof of the \emph{Figure-Eight} solution. We briefly summarize them as following. 

\begin{itemize}	
	\item \textbf{Local deformation}: assuming there is an isolated collision along the minimizing path, one tries to show that after a small deformation near the isolated collision, one gets a new path with strictly smaller action value, which gives a contradiction. For the details see \cite{Mo98}, \cite{Ve02}, \cite{Y16b}, \cite{Mc02}, \cite{C02} and \cite{FT04}. In the last three references the existence of such a local deformation was implied implicitly through Marchal's average method. 
			
	\item \textbf{Level estimate}: one gives a sharp lower bound estimate of the action functional among all the collision paths in the admissible class and then try to find a test path within the admissible class, such that its action value is strictly smaller than the previous lower bound estimate. The Figure-Eight solution was originally proved in \cite{CM00} using this method. Results obtained using this method can also be found in \cite{Ch01}, \cite{Ch03}, \cite{Ch08}, \cite{CL09}, \cite{COX12}, \cite{Ch13} and the references within. 
\end{itemize}

Despite of the above progresses, up to our knowledge very few simple choreographies have been rigorously proved. Besides the Figure-Eight, the Super-Eight was first proved in \cite{KZ03} with rigorous numerical method and then analytically in \cite{Sh14}. The best result so far is obtained by Ferrario and Terracini in \cite{FT04}, where they proved there is at least one Fight-Eight type simple choreography for every odd $N.$

One way to bypass the difficulty is to change the potential from Newtonian to \emph{strong force} (for a precise definition see the end of Section \ref{sec proof}). For a strong force potential, the action value of a path with any collision must be infinity. With this result, in \cite{CGMS02} the authors showed that there are infinitely many distinct \textbf{main} simple choreographies, when the potential is a strong force (following \cite{CGMS02}, by a \textbf{main} simple choreography, we mean it is not derived from a given simple choreography either by traveling around it multiple times or by a continuation of the angular momentum, or a combination of both). For the Newtonian potential, based on numerical discoveries, they made the following conjecture in the same paper. 

\begin{con} \label{conjecture}    
      For every $N \ge 3$, there is a \textbf{main} simple choreography solution for the equal mass Newtonian $N$-body problem different from the trivial circle one (i.e., the rotating $N$-gon). The number of such distinct \textbf{main} simple choreographies grows rapidly with $N$.  
\end{con}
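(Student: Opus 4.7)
My plan is to prove the stronger quantitative form stated in the abstract — the existence of at least $2^{N-3}+2^{[(N-3)/2]}$ distinct main simple choreographies — which in particular settles the conjecture. The strategy is variational: realize each choreography as the minimizer of the Lagrangian action on a distinct symmetry-constrained loop space, exclude collisions so that the minimizers are classical solutions, and then argue that the resulting orbits are pairwise inequivalent and not derived from simpler ones.

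A simple choreography is determined by a single planar loop $q:\rr/\zz\to\rr^2$ via $q_i(t)=q(t+i/N)$, so the base space is $H^1(S^1,\rr^2)$, carrying a built-in $\zz_N$ shift symmetry. For each $N\ge 3$ I would construct a combinatorial catalog of additional finite symmetry groups $G_\om$, indexed by ``symmetry words'' $\om$ over a small alphabet, each acting on $H^1(S^1,\rr^2)$ through a combination of time reversal, shifts by multiples of $1/N$, and planar reflections. The asserted count should split naturally as $2^{N-3}$ generic words plus $2^{[(N-3)/2]}$ palindromic words that enjoy a bonus reflection symmetry; this is consistent with the numerical patterns of Sim\'o catalogued in \cite{CGMS02}. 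For each $\om$ I minimize the action over the $G_\om$-equivariant subspace $\Lmd_\om\subset H^1$ reduced modulo translations. The symmetry pins the center of mass and provides coercivity, so the direct method and weak lower semicontinuity produce a minimizer $q_\om$.

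The hard step is showing every $q_\om$ is collision-free. Since the Newtonian potential is not strong-force, a minimizer can a priori pass through isolated binary or total collisions with finite action; this is precisely what blocked a direct adaptation of the strong-force theorem of \cite{CGMS02}. I would follow the local-deformation line of \cite{FT04}, \cite{C02} and \cite{Y16b}, using Marchal's averaging together with an equivariant perturbation near each putative collision to produce a $G_\om$-equivariant variation whose leading-order action change is strictly negative, contradicting minimality. The combinatorial heart of the argument — and the step I expect to be the main obstacle — is verifying uniformly across the full list of $\om$'s a suitable ``rotating-circle''/transversality condition on the isotropy subgroup at each collision instant, so that the equivariant deformation is not forced to preserve the singular configuration. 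This requires a delicate case analysis of the fixed-point structure of each $G_\om$, and the bookkeeping over all $2^{N-3}+2^{[(N-3)/2]}$ classes is nontrivial; I would organize it by the palindromic/non-palindromic dichotomy and exploit the fact that reflections generically break coincidence.

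Once collisions are excluded, each $q_\om$ is a genuine periodic solution realized as a simple choreography. Distinctness of $q_\om$ and $q_{\om'}$ for $\om\neq\om'$ follows from a planar isotopy invariant of the image loop (a signed pattern of crossings and reflection axes, determined by $G_\om$) that is preserved under time reparameterization and Euclidean motions. Finally, to certify that each $q_\om$ is \emph{main} in the sense of \cite{CGMS02}, I would check that it is not a multiple cover of a lower-period choreography — its minimal period is pinned by $G_\om$ — and not obtained by angular-momentum continuation, since the imposed reflections already force zero angular momentum. Combining these ingredients produces at least $2^{N-3}+2^{[(N-3)/2]}$ pairwise inequivalent main simple choreographies, in particular establishing the conjecture.
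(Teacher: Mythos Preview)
Your overall architecture is right in spirit, but there is a genuine gap at the collision-exclusion step, and it stems from a misidentification of what the index $\om$ parametrizes. In the paper the different choreographies do \emph{not} come from different symmetry groups $G_\om$; they all live in the same $D_N$-equivariant loop space $\Lmd_N^{D_N}$, and $\om\in\{\pm1\}^{N-1}$ labels different \emph{topological} sectors (signs of $\Im z_0(j/2)$) within that one space. This matters because the Ferrario--Terracini rotating-circle machinery you propose is designed to rule out collisions for minimizers over a full $G$-equivariant class: Marchal's averaging perturbs the colliding cluster in all directions and takes a mean. Once topological constraints are imposed, that average can --- and here does --- push the deformed path into a different $\om$-sector, so it is no longer an admissible competitor. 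The paper says this explicitly in the introduction, and your ``case analysis of the isotropy subgroup at each collision instant'' cannot repair it, because the obstruction is topological, not coming from isotropy.

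The idea you are missing, and the paper's main innovation, is an additional \emph{monotone constraint}: one minimizes not over $\Lmd^{D_N}_{N,\om}$ but over the smaller set $\Lmd^+_\om$ of loops whose real part $x_0(t)$ is nondecreasing on each half-integer subinterval. This extra constraint (i) forces the bodies into separate vertical strips, so the only possible collisions are binary and occur at the endpoints $t=0,1/2$; (ii) supplies enough control on relative positions that when the collision angle is generic one can do a local deformation \emph{and then globally re-monotonize} the path to stay in $\Lmd^+_\om$; and (iii) in the one bad case $\tht_j^+=-\pi/2$ where local deformation is known to fail (Gordon's obstruction), it enables a genuinely global deformation whose action drop comes from potential gain far from the collision. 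None of this is accessible from the equivariant-averaging viewpoint alone. A separate technical point: one must also show the minimizer does not saturate the monotone constraint (else it is not a critical point), which requires its own deformation arguments; your outline has no analogue of this step either.
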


Let $[\cdot]$ denote the integer part of a real number. We confirm this conjecture by proving the following result in this paper. 

\begin{main} 
 For every $N \ge 3$, there are at least $2^{N-3} + 2^{[(N-3)/2]}$ different \textbf{main} simple choreographies for the equal mass Newtonian $N$-body problem. 
\end{main}

We remark that if the condition of being \textbf{main} simple choreographies was dropped, then the existence of infinitely many simple choreographies have been established by Chenciner and F{\'e}joz in \cite{CF09}.

In the rest of the paper, we will only consider the planar $N$-body problem. For simplicity, we also assume every mass is one: $m_j = 1,$ for any $j \in \N := \{0, 1, \dots, N-1\}.$ Let $z = (z_j)_{j \in \N} \in \cc^N$ be the positions of the masses, then it satisfies the following equations
\begin{equation}
 \label{N body} \ddot{z}_j = \sum_{k \in \N \setminus \{j\}} - \frac{z_j - z_k}{|z_j - z_k|^3}, \quad j \in \N.
\end{equation}
This is the Euler-Lagrange equation of the action functional
$$ \ac(z, T_1, T_2) := \int_{T_1}^{T_2} L(z, \zd) \,dt, \quad \quad L(z, \zd) := K(\zd) + U(z), $$
where $K(\zd)$ is the kinetic energy and $U(x)$ is the (negative) potential energy: 
$$ K(\zd) := \ey \sum_{j \in \N} |\zd_j|^2, \quad \quad U(z) := \sum_{\{j < k \} \subset \N} \frac{1}{|z_j - z_k|}. $$
Furthermore we set $\ac(z, T) := \ac(z, 0, T).$

By the homogeneity of the potential, given a periodic solution, one can find another one with any prescribed period. Because of this, we will only consider periodic solutions with period $N$.

Following \cite{CM00} and \cite{CGMS02}, the key idea is to impose proper symmetric constraints on the loop space, then obtain the simple choreographies as collision-free minimizers. We recall it briefly in the following. 

Let $\Lmd_N = H^1(\rr/N\zz, \cc^{N})$ be the space of Sobolev loops and $\hat{\cc}^N := \{ z \in \cc^{N}: z_j \ne z_k, \; \forall \{j \ne k \} \subset \N \}$ the space of collision-free configurations, then $\lmh_N= H^1(\rr/N\zz, \hat{\cc}^N)$ is the subspace of collision-free loops. Given a finite group $G$, we can define its action on $\Lmd_N$ as following:
\begin{equation}
 \label{symmetry} 
 g\big( z(t) \big)= \big( \rho(g) z_{\sg(g^{-1})(0)} (\uptau(g^{-1})t), \dots, \rho(g) z_{\sg(g^{-1})(N-1)} (\uptau(g^{-1})t) \big), \quad \forall g \in G, 
\end{equation}
where 
\begin{enumerate}
 \item $\uptau: G \to O(2)$ representing the action of $G$ on the time circle $\rr/N\zz,$
 \item $\rho: G \to O(2)$ representing the action of $G$ on the $2$-dim Euclid space,
 \item $\sg: G \to \mc{S}_N$ representing the action of $G$ on the index set $\N. $
\end{enumerate}

$\Lmd^G_N := \{ z \in \Lmd_N: g(z(t)) = z(t), \; \forall g \in G \}$ is the space of $G$-equivariant loops. As all masses are equal, the action functional $\ac$ is invariant under the above group action in $\Lmd^G_N$. By the symmetric critical principle of Palais \cite{Pa79}, a critical point of $\ac$ in $\hat{\Lmd}^G_N$ ($\hat{\Lmd}^G_N := \Lmd^G_N \cap \lmh_N$)  is also a critical point of it in $ \hat{\Lmd}_N$, and therefore a solution of equation \eqref{N body}.

Consider the cyclic group $\zz_N = \langle g | \; g^N = 1 \rangle $ with the following action
\begin{equation}
 \label{g} \uptau(g) t = t-1, \quad \rho(g) = \text{identity}, \quad \sg(g) = (0,1, \dots, N-1).
\end{equation}
Then the following holds for any $z =(z_j)_{j \in \N} \in \Lmd^{\zz_N}_N$, 
\begin{equation}
 \label{chore eq} z_j(t) = z_0(j+t),\;\; \forall t \in \rr, \;\; \forall j \in \N. 
\end{equation}
This means any collision-free critical point of $\ac$ in $\Lmd^{\zz_N}_N$ must be a simple choreography. The most obvious critical point is of course the global minimizer. However as proved in \cite{BT04}, the global minimizer in $\Lmd^{\zz_N}_N$ is nothing but the \emph{rotating $N$-gon}. 

A possible way to get non-trivial simple choreographies is to consider a group $G$ with proper action, such that it contains $\zz_N$ with action given in \eqref{g} as a subgroup, and meanwhile the symmetric constraints is strong enough to rule out the rotating $N$-gon. This was exactly the idea used in \cite{CM00} and \cite{FT04}. 

However the real gold deposits locate on the topological constraints. As we can see the space of collision-free loops $\hat{\Lmd}^{\zz_N}_N$ has infinitely many connected components. A nice way of distinguishing these components is through the braids group, for the details see the beautiful paper by Montgomery \cite{Mo98}. In principle it is possible to find a main choreography solution in each connected component. For a strong force potential, this was indeed proved in \cite{CGMS02}.

Meanwhile for the Newtonian potential, it is much more difficult to show that a minimizer is collision-free when topological constraints are involved. First \emph{local deformation} usually fails in this case, because to be able to lower the action the possible directions of local deformation are restricted and in many cases you end up within a topological class different from the required one. Second although in principle \emph{level estimate} should work under topological constraints, in practice except some special cases, it is very difficult to give an accurate lower bound estimate of the action values of the collision paths.

To overcome the difficulty caused by the topological constraints, we propose to introduce an additional \emph{monotone constraints} (see Definition \ref{mc}). This constraints provide further information regarding the relative positions of the masses. With such information, first we can rule out collisions involving more than two masses, and second when there is a binary collision, it allows us to make certain \emph{global deformation} of the path to get a new one with strictly lower action value and reach a contradiction. We believe this is the first time such idea has been used in this classic problem. The details will be given in Section \ref{sec proof}.

In the rest of this section, a proof of the \textbf{Main Theorem} will be given. Let's consider the dihedral group
$D_N = \langle g, h |\; g^N = h^2= 1, \; (gh)^2 =1 \rangle$ with the action of $g$ defined as in \eqref{g} and $h$ as following
\begin{equation}
 \label{h} \uptau(h) t = -t+1, \quad \rho(h) q = \bar{q}, \quad \sg(h) = (0, N-1)(1, N-2)\cdots(\nf, N-1-\nf),
\end{equation}
where $q \in \cc$, $\bar{q}$ is its conjugate, and 
\begin{equation}
\label{eq: N star}  \nf := [(N-1)/2].
\end{equation}

By the above definition, $\ldn \subset \Lmd^{\zz_N}_N$. However the rotating $N$-gon is also contained in $\ldn$. Hence the minimizer of $\ac$ in $\ldn$ is again such a trivial solution. In order to get non-trivial simple choreographies, topological constraints need to be added into the problem. 

First by the symmetric constraints, $z =(z_j)_{j \in \N} \in \ldn$ must satisfy \eqref{chore eq} and 
\begin{equation}
\label{eq: symm 1} z_j(t) = \bar{z}_{N-1-j}(1-t), \;\; \forall t \in \rr, \; \forall j \in \N.
\end{equation}
In particular, 
\begin{equation}
\label{eq: symm 3} z_j(0)  = \bar{z}_{N-j}(0), \; \text{ if } j \in \N \setminus \{0\}; \;\; z_j(1/2)  = \bar{z}_{N-1-j}(1/2), \; \text{ if } j \in \N.
\end{equation}
The symmetric constraints also imply 
\begin{equation}
\label{eq: symm 2} z_0(t) = \bar{z}_0(-t), \;\; \forall t \in \rr,
\end{equation}
\begin{equation}
\label{eq: z0 zj zN-j} z_0(j+t) = \begin{cases}
z_j(t), \;\; & \forall t \in [0, 1/2], \\
\zb_{N-1-j}(1-t), \;\; & \forall t \in [1/2, 1],
\end{cases} \;\; \text{if } j \in \{0, 1, \dots, \nf\}.
\end{equation}
As a result, if $z \in \ldn$ is collision-free, it must satisfies the following:
\begin{equation}
\label{eq: z0 top const} \Im(z_0(j/2)) \ne 0, \;\; \forall j \in \N \setminus \{0\}.
\end{equation}

Based on the above observation, we set 
\begin{equation}
\label{eq: Om_N} \Om_N := \{ \om = (\om_j)_{j \in \N \setminus \{0\}} : \om_j \in \{\pm 1\}, \; \forall j \in \N \setminus \{0\} \}.
\end{equation}
\begin{dfn}
For any $\om \in \Om_N$, we say a loop $z \in \ldn$ satisfies the \textbf{$\om$-topological constraints}, if 
\begin{equation}
 \label{eq: top con} \Im(z_0(j/2)) = \om_j |\Im(z_0(j/2))|, \;\; \forall j \in \N \setminus \{0\}.
 \end{equation}. 
\end{dfn}
Notice that if $z \in \hld$, then 
\begin{equation}
\label{eq: top con 2} \Im(z_0(j/2)) \ne 0 \text{ and } \frac{\Im(z_0(j/2))}{|\Im(z_0(j/2))|} =\om_j,\;\; \forall j \in \N \setminus \{0\}.
\end{equation} 

\begin{thm} \label{thm 1}
	For each $\om \in \Om_N$, there is at least one simple choreography $z^{\om}=(\zo_j)_{j \in \N} \in \hld$ satisfying \eqref{N body}, the $\om$-topological constraints and the following 
	\begin{equation}
	\label{eq: dot x > 0} \dot{x}_0^{\om}(t) > 0, \; \forall t \in (0, N/2), \text{ and } \dot{x}_0^{\om}(0) = \dot{x}_0^{\om}(N/2)=0, 
	\end{equation}
	where $x_0^{\om}(t) = \Im(\zo_0(t))$. 	
\end{thm}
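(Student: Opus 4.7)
The strategy is the direct method of the calculus of variations applied to $\ac(\cdot, N)$ restricted to a carefully chosen closed, convex subset $\lo \subset \ldn$ encoding both the $\om$-topological constraint and a compatible monotonicity constraint.

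First, define $\lo$ as the set of loops in $\ldn$ satisfying the $\om$-topological constraint \eqref{eq: top con} together with the \emph{monotone constraint} of Definition~\ref{mc} from Section~\ref{sec proof}. Both extra conditions are closed and convex, so $\lo$ is weakly closed in the $H^1$-topology. A test loop (for instance, a suitable perturbation of the rotating $N$-gon respecting the prescribed sign data) shows that $\lo$ is non-empty and contains elements of finite action. The $h$-symmetry $\rho(h)q=\bar q$ pins the center of mass at the origin, so by standard estimates $\ac$ is coercive on $\lo$; together with the weak lower semicontinuity of $\ac$ (the kinetic term is convex quadratic and the potential term is a positive lower semicontinuous functional of $z$), the direct method yields a minimizer $\zo \in \lo$. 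Since the rotating $N$-gon fails the monotone constraint, $\zo$ is automatically non-trivial.

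The central difficulty, and the technical heart of the paper, is to show that $\zo$ is free of collisions, so that $\zo \in \hlo \subset \hld$. I expect this to split into two parts. First, collisions involving three or more bodies are excluded by combining the monotone constraint with the choreographic relation \eqref{chore eq}: the monotonicity induces a strict ordering of the $N$ bodies away from a measure-zero set, which is incompatible with three or more of them coinciding. Second, for an isolated binary collision, the monotone constraint determines on which side of the relevant axis the non-colliding bodies lie, and one can then perform a \emph{global deformation}---a symmetric reflection or swap of a neighborhood of the collision---producing a competitor in $\lo$ with strictly smaller action, contradicting minimality. The main obstacle is verifying that this deformed path remains in both the $D_N$-symmetry class and the monotone-constraint class; I expect this to be where the majority of Section~\ref{sec proof} is spent.

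Once $\zo$ is collision-free, the $\om$-topological constraint sharpens to \eqref{eq: top con 2}, and Palais's symmetric critical point principle upgrades $\zo$ to a critical point of $\ac$ on the full collision-free loop space $\hat{\Lmd}_N$, hence a classical solution of \eqref{N body}. The strict inequality $\dot{x}_0^\om(t)>0$ on $(0,N/2)$ together with the endpoint vanishing then follows from combining the one-sided monotonicity built into $\lo$ with real-analyticity of the Newton flow (a non-trivial solution cannot have $\dot{x}_0^\om$ vanishing on a subinterval) and with the symmetries of $\zo_0$ that render $t=0$ and $t=N/2$ natural critical points of $x_0^\om$.
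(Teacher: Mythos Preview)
Your overall architecture matches the paper's, but the logical order of the last two paragraphs is inverted, and this creates a genuine circularity. Palais's principle handles the $D_N$-symmetry constraint, but it says nothing about the monotone constraint, which is a closed set of \emph{inequalities}, not a fixed-point set of a group action. A minimizer over $\lo$ that sits on the boundary of the monotone region is not, a priori, a critical point of $\ac$ on $\hat\Lambda_N$; you cannot conclude it solves \eqref{N body}, and therefore you cannot invoke real-analyticity of the flow to deduce strict monotonicity. The paper resolves this by proving strict monotonicity \emph{first}, directly from minimality and by hand (Lemmas~\ref{nondecreasing}--\ref{lm: odd strict}): it builds explicit competitors in $\lo$ that beat any minimizer with $x_0^\om$ constant on a subinterval. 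A key intermediate step you are missing entirely is ruling out the degenerate case $x_0^\om \equiv \text{const}$ (Lemma~\ref{degenerate}), which requires a separate blow-up and local-deformation argument carried out in the Appendix. Only after strict monotonicity is established does the minimizer lie in the interior of the constraint set, and only then does Palais apply.

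Two smaller points. Your coercivity argument is not quite right: the $h$-symmetry forces the center of mass onto the real axis but does not pin it to the origin, so horizontal translations remain and coercivity would fail. The paper fixes this by building the normalization $x_0(0)\le 0\le x_0(N/2)$ into the definition of $\Lambda^+$; see \eqref{0} and the proof of Proposition~\ref{lm 1}. Also, the rotating $N$-gon \emph{does} satisfy the monotone constraint (for $\om_j\equiv 1$ the paper explicitly notes that $\zo$ is the rotating $N$-gon), so your non-triviality remark is incorrect and in any case unnecessary for the theorem as stated. Finally, your sketch of the binary-collision step underestimates the difficulty: the paper needs Sundman's asymptotics and the existence of a collision angle $\theta_j^+$, and must treat the borderline case $\theta_j^+=-\pi/2$ separately (Lemma~\ref{thtp2}) by a deformation whose action gain is of order $\ep^2$ against a kinetic cost of order $\ep^{8/3}$; a generic ``reflection or swap'' will not handle this case.
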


Such $\zo$'s will be obtained as collision-free minimizer. The detailed proof of the above theorem will be given in Section \ref{sec proof}. For the moment we use it to give a proof of the \textbf{Main Theorem}. 

\begin{proof}
Although there are $2^{N-1}$ different $\om$'s in $\Om_N$. The number of distinct \text{main} simple choreographies is smaller. As some of the $\zo$'s obtained in Theorem \ref{thm 1} are identical to each other after a time reversing, a rigid motion or a combination of both. 

First for any $\om \in \Om_N$, $\zo(-t)$ satisfies the $(-\om)$-monotone constraints, so we only need to count the $\om$'s from the set $\Om^+_N: = \{ \om \in \Om_n: \; \om_1 = 1 \}$. 

Meanwhile let $\om^* = (\om^*_j)_{j \in \N \setminus \{0\}} \in \Om_N$ be defined as $\om^*_j = \om_{N-j}$. Then $-\zo(-t)$ satisfies the $\om^*$-monotone constraints. To exclude those, consider the following disjoint union $\Om_N^+= \Om_N^{+, -} \cup \Om_N^{+, +}$, where 
$$ \Om_N^{+, -}: = \{ \om \in \Om_N^+: \om_{N-1}=-1 \}, \; \; \Om_N^{+, +}:= \{ \om \in \Om_N^{+}: \om_{N-1} =1 \}. $$

There are $2^{N-3}$ different $\om$'s in $\Om_N^{+, +}$. However if $\om \in \Om_N^{+, +}$, so is $\om^* \in \Om_N^{+, +}$. Meanwhile notice that there are $2^{[(N-2)/2]}$ different $\om$'s in $\Om_N^{+,+}$ with $\om^*= \om$. As a result, the number of effective $\om$ we can count from $\Om_{N}^{+, +}$ is 
\begin{equation}
\label{eq: count om ++} 2^{N-4} +2^{[(N-4)/2]} = \ey \big( 2^{N-3}+ 2^{[(N-2)/2]} \big).
\end{equation}

Now for any $\om \in \Om^{+, -}_N$, although $\om^* \notin \Om_N^{+, -}$, but $-\om^*$ is. At the same time, notice that, there are $2^{[(N-2)/2]}$ different $\om \in \Om_N^{+,-}$ with $\om= -\om^*$, if $N$ is odd, and this number is zero, if $N$ is even (this is because, when $N$ is even, $\om=-\om^*$ implies $\om_{N/2}= -\om_{N/2}$, which can never happen, as $\om_{N/2} \in \{\pm1 \}$). As a result, the number of effective $\om$, we can count from $\Om_N^{+,-}$ is 
\begin{equation}
\label{eq: count om +-} \begin{cases}
 2^{N-4}, \; & \text{ if } N \text{ is even}, \\
 2^{N-4}+2^{[(N-4)/2]}, \; & \text{ if } N \text{ is odd}. 
\end{cases}
\end{equation}

By \eqref{eq: count om ++} and \eqref{eq: count om +-}, there are $2^{N-3}+2^{[(N-3)/2]}$ distinct main simple choreographies, and this finishes our proof.
\end{proof}

The number $2^{N-3} +2^{[(N-3)/2]}$ obtained above is the same as the number of simple choreographies belonging to a special family called \emph{linear chains}, see \cite[Proposition 5.1]{CGMS02}. This is not a coincidence. The family of linear chains consists of all simple choreographies which look like a chain of consecutive bubbles placed along the real axis (Both the Figure-Eight solution and the Super-Eight solution are from this family). As each simple choreography $\zo \hat{\Lmd}_{N}^{D_N}$ obtained in Theorem \ref{thm 1} satisfies \eqref{eq: symm 2}, \eqref{eq: top con 2} and \eqref{eq: dot x > 0}, it must belong to the family of linear chains as well. 

For each $\om$, the number of bubbles of the corresponding $\zo$ has a minimum determined by $\om$. For example, if $\om_j=1$, $\forall j \in \N \setminus \{0\}$, then the minimum is $1$. In fact, we know in this case $\zo$ must be the rotating $N$-gon, which consists exactly one bubble. However in general we don't know if the number of bubbles of $\zo$ is exactly this minimum. 

Our paper is organized as following: in Section \ref{sec proof}, we give a of Theorem \ref{thm 1}; in Section \ref{sec: addsym}, we consider simple choreographies with extra symmetries; in the last section, the Appendix, the proof of a technical lemma will be given. 

\textbf{Notations:} the following notation rules will apply through out the paper. 
\begin{itemize}
\item $i$ will always represent $\sqrt{-1}$;
\item Given a path $z(t)$ of the $N$-body, then $z_j(t)$ is the corresponding path of $m_j$, for any $j \in \N$ with $z_j(t)= x_j(t)+iy_j(t)$ and $x_j(t), y_j(t) \in \rr$;
\item If instead of $z(t)$, a path of the $N$-body is denoted by $\ztl(t), \zo(t), z^{\ep}(t) \dots$, then the corresponding changes will be made on $z_j(t), x_j(t)$ and $y_j(t)$;
\item Given any two non-negative integers $j_0, j_1$: 
$$ \{j_0, \dots, j_1 \} := \begin{cases}
\{j \in \zz: \; j_0 \le j \le j_1\}, \; & \text{ if } j_0 \le j_1, \\
\emptyset, \; & \text{ if } j_0 > j_1;
\end{cases} $$
\item $C_1, C_2, \dots$ representing positive constants vary from proof to proof. 
\end{itemize}

\section{Proof of Theorem \ref{thm 1}} \label{sec proof}
When there is no confusion, in this section $\ldn, \hld$ will simply be written as $\Lmd, \hat{\Lmd}$ respectively.

Due to the symmetric constraints, a loop $z \in \Lmd$ is entirely determined by $z(t), t \in [0, 1/2]$, so it is enough to focus on the \emph{fundamental domain}: $[0, 1/2].$ In the rest of the paper, when we try to define a loop $z \in \Lmd$, only $z(t), t \in [0, 1/2]$, will be given explicitly (the rest will follow from the symmetric constraints). Furthermore in this section, we set $\ac(z) := \ac_K (z) + \ac_U (z)$, where 
$$ \ac_K(z) := \int_0^{1/2} K(\zd) \,dt, \;\; \ac_U(z): = \int_0^{1/2} U(z) \,dt.$$

First let's introduce the monotone constraints. 
\begin{dfn} \label{mc}
	We say a loop $z \in \Lmd$ satisfies the \textbf{monotone constraints}, if 
	$$ x_0(j/2) \le x_0(j/2+t) \le x_0(j/2+1/2),\;\; \forall t \in [0, 1/2], \; \forall j \in \N;$$ 
	and the \textbf{strictly monotone constraints}, if 
	$$ x_0(t_1) < x_0(t_2), \;\; \text{ for any } 0 \le t_1 < t_2 \le N/2. $$ 
\end{dfn}

Let $\Lmd^+$ be the subset of all loops in $\Lmd$, which satisfy the monotone constraints and the following inequalities
\begin{equation}
\label{0} x_0(0) \le 0 \le x_0(N/2).
\end{equation}

By \eqref{eq: symm 1}, \eqref{eq: symm 3} and \eqref{eq: z0 zj zN-j}, $z \in \Lmd$ satisfying the monotone constraints is equivalent to the following:
\begin{equation} \label{eq: monotone 1} 
\begin{cases}
x_j(0) \le x_j(t) \le x_j(1/2), \; &\text{ if } j \in \{0, \dots, \mf{n}\},\\
x_j(0) \ge x_j(t) \ge x_j(1/2), \; &\text{ if } j \in \N \setminus \{0, \dots, \mf{n}\}, 
\end{cases} \;\; \forall t \in [0, 1/2];
\end{equation}
\begin{equation} 
\label{eq: monotone 2} \begin{split}
& x_0(0) \le x_0(1/2) = x_{N-1}(1/2) \le x_{N-1}(0)= x_1(0) \le  \cdots  \\
                    & \cdots \le x_{n+1}(0) = x_{n-1}(0) \le x_{n-1}(1/2)= x_n(1/2) \le x_n(0), \; \text{ if } N=2n;
\end{split}
\end{equation}
\begin{equation}
\label{eq: monotone 3} \begin{split}
& x_0(0) \le x_0(1/2) = x_{N-1}(1/2) \le x_{N-1}(0)= x_1(0) \le  \cdots  \\
& \cdots \le x_{n-1}(1/2)= x_{n+1}(1/2) \le x_{n+1}(0)= x_n(0) \le x_n(1/2), \; \text{ if } N =2n+1, 
\end{split}
\end{equation}

As a result, the motion of the masses satisfy certain monotone constraints along the horizontal direction. For illuminating pictures, see Figure \ref{fig: C0}. Roughly speaking the masses are located on the solid vertical lines, when $t =0$; on the dashed vertical lines, when $t =1/2$. Furthermore between $t=0$ and $t=1/2$, each mass is confined within the unique vertical strip bounded by the neighboring vertical lines. Whether the masses are above or below the real axis on each vertical line are determined by the $\om$-topological constraints. The pictures in Figure \ref{fig: C0} are corresponding to $\om \in \Om_N$ with $\om_j=1$, for any $i \in \N \setminus \{0\}$. 

\begin{figure}
	\centering
	\includegraphics[scale=0.65]{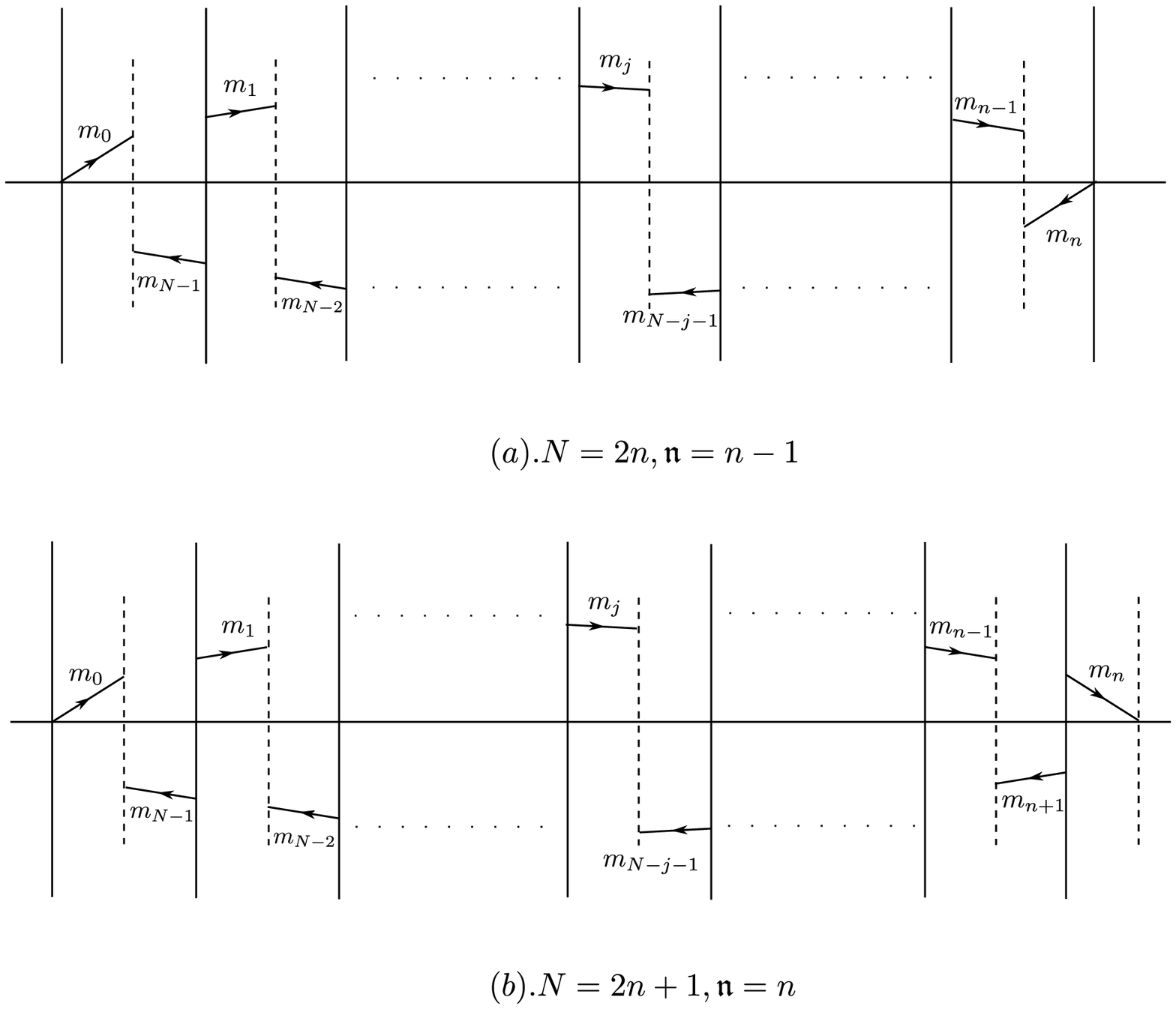}
	\caption{}
	\label{fig: C0}
\end{figure}

To put the topological constraints back into the problem. For any $\om \in \Om_N$, we define $\lo$ to be the subset of loops in $\Lmd^+$, which satisfy the $\om$-topological constraints. Correspondingly we set $ \hat{\Lmd}^+: = \Lmd^+ \cap \hat{\Lmd}$, $\hlo: = \lo \cap \hat{\Lmd}.$

\begin{prop}
	\label{lm 1} For each $\om \in \Om_N$, there is a $z^{\om} \in \lo,$ which is a minimizer of the action functional $\ac$ in $\lo.$
\end{prop}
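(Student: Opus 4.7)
The plan is to apply the direct method of the calculus of variations to $\ac$ on $\lo$: I will verify that $\lo$ is non-empty and weakly closed in $H^1(\rr/N\zz, \cc^N)$, that $\ac$ is coercive and weakly lower semi-continuous on it, and then extract a weakly convergent subsequence from a minimizing sequence whose limit realizes the infimum. Non-emptiness is straightforward: one writes down a smooth piece $z_0|_{[0,1/2]}$ with affine real part and an imaginary part whose sign at each $t=j/2$ matches $\om_j$, extends it to all of $\rr$ via \eqref{chore eq} and \eqref{eq: symm 1}, and, by choosing the shape generically, ensures no collisions and a finite action value.

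Weak closure of $\lo$ follows from the compact Sobolev embedding $H^1(\rr/N\zz) \hookrightarrow C^0(\rr/N\zz)$: weak $H^1$-convergence implies uniform convergence, and every constraint defining $\lo$ is preserved in the uniform limit, namely the $D_N$-equivariance, the monotone inequalities of Definition \ref{mc}, the endpoint bound $x_0(0) \le 0 \le x_0(N/2)$, and the $\om$-topological constraints \eqref{eq: top con} (equivalent to the closed conditions $\om_j \Im(z_0(j/2)) \ge 0$). Weak lower semi-continuity of $\ac$ is then standard: $\ac_K$ is a convex continuous quadratic form in $\zd$, hence weakly lower semi-continuous, while the potential $U$ is non-negative and lower semi-continuous on $\cc^N$ when extended by $+\infty$ on the collision set, so Fatou's lemma yields $\liminf \ac_U(z_n) \ge \ac_U(z_\infty)$ along any uniformly convergent sequence.

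The main obstacle I expect is coercivity, since the planar Newtonian action is invariant under translating all bodies by a common complex constant and nothing in the definition of $\lo$ is \emph{immediately} translation-breaking. The trick should be to combine the symmetric and monotone constraints. A loop $z \in \lo$ is determined by $z_0: \rr/N\zz \to \cc$ satisfying $z_0(-t) = \zb_0(t)$, so its imaginary part $y_0$ is odd on $\rr/N\zz$ and hence has zero mean, while its real part $x_0$ is even and, by \eqref{eq: monotone 2}--\eqref{eq: monotone 3} combined with local monotonicity on each half-interval, non-decreasing on $[0, N/2]$; the inequality $x_0(0) \le 0 \le x_0(N/2)$ then forces $x_0$ to vanish somewhere. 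Poincar\'e--Sobolev on $\rr/N\zz$ now supplies a constant $C$ with
\[
 \|x_0\|_{L^\infty} + \|y_0\|_{L^\infty} \le C\, \|\zd_0\|_{L^2},
\]
and since $\ac_K(z)$ is a positive multiple of $\|\zd_0\|_{L^2}^2$ while the remaining $z_j$ are mere time-translates of $z_0$, this yields $\ac(z) \to \infty$ whenever $\|z\|_{H^1(\rr/N\zz,\cc^N)} \to \infty$ inside $\lo$.

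Putting all four pieces together, a minimizing sequence in $\lo$ is bounded in $H^1$, its weak $H^1$-limit along a subsequence lies in $\lo$ by weak closure, and realizes the infimum of $\ac$ by lower semi-continuity; this limit is the desired $\zo$.
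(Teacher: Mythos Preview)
Your proof is correct and follows essentially the same route as the paper: weak lower semi-continuity and weak closedness of $\lo$ are taken as standard, and coercivity is obtained from a Poincar\'e-type inequality using $y_0(0)=0$ (from $z_0(-t)=\bar z_0(t)$) together with the existence of a zero of $x_0$ on $[0,N/2]$ (from $x_0(0)\le 0\le x_0(N/2)$), exactly as in the paper's estimates \eqref{eq: zk le dot zk}--\eqref{eq: A ge dot z}. One small correction: the monotone constraints of Definition~\ref{mc} do \emph{not} make $x_0$ non-decreasing on $[0,N/2]$ (that is only proved later, in Lemma~\ref{nondecreasing}, for the minimizer), but you do not need this---continuity of $x_0$ and the endpoint inequalities already give the zero via the intermediate value theorem.
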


\begin{proof}
	It is well known the action functional $\ac$ is weakly semi-lower continuous with respect to the Sobolev norm $H^1$. Meanwhile $\lo$ is weakly closed with respect to the same norm. Therefore we just need to show $\ac$ is coercive in $\lo$. 

	Choose a sequence of loops $\{z^k \in \lo\}$ with $\|z^k\|_{H^1}$ goes to infinity, as $k$ goes to infinity, it is enough to show that $\ac(z^k, N)$ goes to infinity as well.

	Recall that $z^k_0(t)= x^k_0(t)+iy^k_0(t)$. By \eqref{eq: symm 2}, $y^k_0(0)=0$, then 
	$$ |y_0^k(t_0)| = |y^k_0(t_0)-y^k_0(0)| \le \int_0^N |\dot{y}^k_0(t)| \,dt, \;\; \forall t_0 \in [0, N). $$
	By Cauchy-Schwartz inequality, 
	$$ |y^k_0(t_0)|^2 \le \big( \int_0^N |\dot{y}^k_0(t)| \,dt \big)^2 \le N \int_0^N |\dot{y}^k_0(t)|^2 \,dt, \;\; \forall t_0 \in [0, N). $$
	Hence 
	$$ \int_0^N |y^k_0(t)|^2 \,dt \le N^2 \int_0^N |\dot{y}^k_0(t)|^2 \,dt. $$

	Meanwhile by \eqref{0}, there is always a $t_k \in [0, N/2]$, such that $x^k_0(t_k)=0$. Then by similar computations as above, 
	$$ \int_0^N |x^k_0(t)|^2 \,dt \le N^2 \int_0^N |\dot{x}^k_0(t)|^2 \,dt. $$
	As a result,
	\begin{equation}
	\label{eq: zk le dot zk} \int_0^N |z^k_0(t)|^2 \,dt \le N^2 \int_0^N |\dot{z}^k_0(t)|^2 \,dt. 
	\end{equation} 
	Because $z^k \in \lo$, it satisfies \eqref{chore eq}. Then 
	\begin{equation}
	\label{eq: zk H1 norm}  \|z^k\|^2_{H^1} = \int_0^N \sum_{i \in \N} \big( |z^k_j(t)|^2 + |\dot{z}^k_j(t)|^2\big) \,dt = N \int_0^N |z^k_0(t)|^2+|\dot{z}^k_0(t)|^2 \,dt.
	\end{equation}
	Notice that the action functional satisfies 
	\begin{equation}
	\label{eq: A ge dot z} \ac(z^k, N) \ge \ey \int_0^N \sum_{j \in \N} |\zd^k_j(t)|^2 \,dt = \frac{N}{2} \int_0^N |\zd_0^k(t)|^2 \,dt.
	\end{equation}
	Together \eqref{eq: zk le dot zk}, \eqref{eq: zk H1 norm} and \eqref{eq: A ge dot z} imply
	$$ \ac(z^k, N) \ge \frac{1}{2(N^2+1)}\|z^k\|^2_{H^1}. $$
	As a result, $\ac(z^k, N)$ goes to infinity, as $k$ goes to infinity.  

\end{proof}

We need to prove the action minimizer $\zo$ is collision-free. However because of the monotone constraints, this does not necessarily mean it is a solution of equation \eqref{N body}. In addition, we also need to show it satisfies the \emph{strictly monotone constraints}. As this property will be useful in showing $\zo$ is collision-free, we will prove it first. 

\begin{lm}
	\label{nondecreasing} For any $\om \in \Om_N$, if $\zo \in \lo$ is an action minimizer of $\ac$ in $\lo$, then 
	 $$ x^{\om}_0(t_1) \le x^{\om}_0(t_2), \;\; \text{ for any } 0 \le t_1 < t_2 \le N/2.$$
\end{lm}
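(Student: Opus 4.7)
The approach is proof by contradiction: suppose there exist $0 \le t_1 < t_2 \le N/2$ with $x^{\om}_0(t_1) > x^{\om}_0(t_2)$, and I will construct a competitor $\tilde{z}^{\om} \in \lo$ with $\ac(\tilde{z}^{\om}) < \ac(z^{\om})$. The first step is to localize the violation: specializing $t = 1/2$ in the monotone constraint $x^{\om}_0(j/2) \le x^{\om}_0(j/2+t)$ yields $x^{\om}_0(j/2) \le x^{\om}_0((j+1)/2)$ for every $j$, so the half-integer values of $x^{\om}_0$ already form a non-decreasing sequence on $[0, N/2]$. Any failure of monotonicity must therefore be confined to some subinterval $I_j := [j/2, (j+1)/2]$, on which $x^{\om}_0$ attains its minimum at the left endpoint and its maximum at the right endpoint yet oscillates.

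Next, I would construct $\tilde{z}^{\om}$ by replacing $x^{\om}_0|_{I_j}$ with a monotone non-decreasing profile $\tilde{x}^{\om}_0$ having the same endpoint values $x^{\om}_0(j/2)$ and $x^{\om}_0((j+1)/2)$, for instance by integrating the positive part of $\dot{x}^{\om}_0$ and rescaling to recover the total rise, while keeping $y^{\om}_0$ unchanged; outside $I_j$ and the conjugate-symmetric interval forced by \eqref{eq: symm 2} leave $z^{\om}_0$ untouched, then extend to the full loop via the $D_N$ symmetries. Because $\tilde{x}^{\om}_0(I_j) \subseteq [x^{\om}_0(j/2), x^{\om}_0((j+1)/2)]$ and no half-integer value is altered, both the $\om$-topological and monotone constraints persist, so $\tilde{z}^{\om} \in \lo$. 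A standard one-dimensional rearrangement inequality yields $\int_{I_j} |\dot{\tilde{x}}^{\om}_0|^2\, dt < \int_{I_j} |\dot{x}^{\om}_0|^2\, dt$ whenever $x^{\om}_0|_{I_j}$ is genuinely non-monotone, hence the kinetic part $\ac_K$ strictly decreases.

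The main obstacle will be controlling the change in the potential part $\ac_U$. Because the choreography condition $z^{\om}_j(t) = z^{\om}_0(j+t)$ couples all $N$ particles to the single loop, modifying $z^{\om}_0$ on $I_j$ shifts particle $k$ horizontally at every time $t$ with $k + t \in I_j \pmod{N}$, and the resulting change in the pairwise distances $|z^{\om}_0(s) - z^{\om}_0(s+m)|$ carries no definite sign under a direct comparison: shifting particle $0$ horizontally can bring it closer to some of the other particles while moving it away from others. I expect to overcome this by either a careful localization, e.g.\ shrinking the modified bump to small size $\ep$ so that the potential change is higher order in $\ep$ while the kinetic gain remains strictly negative, or a more refined global rearrangement exploiting jointly the $\om$-topological and monotone constraints to derive a one-sided bound on $\Delta \ac_U$. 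Establishing this estimate is the hard part of the proof; the prior steps (localization to $I_j$, construction of a constraint-preserving rearrangement, and strict kinetic decrease) are comparatively routine.
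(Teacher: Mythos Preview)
Your localization step is fine and matches the paper: because the monotone constraint already forces $x^{\om}_0(j/2)\le x^{\om}_0((j+1)/2)$, any violation of global monotonicity lives on a single half-interval, and in fact one can find $0\le t_1<t_2\le 1/2$ and $j_0\in\N$ on which $t\mapsto x^{\om}_0(j_0/2+t)$ is strictly decreasing.

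The genuine gap is in the deformation step. Your plan is to replace $x^{\om}_0$ on $I_j$ by a monotone rearrangement, gaining strictly on the kinetic term, and then to somehow absorb the change in $\ac_U$. As you already noticed, this potential change has no sign: shifting one body horizontally moves it closer to some bodies and farther from others, and neither of your suggested fixes (shrinking the bump to size $\ep$, or a ``more refined'' rearrangement) comes with an argument. In particular the small-$\ep$ idea does not help, since the kinetic gain from a monotone rearrangement also vanishes as the modified interval shrinks, and there is no reason the two effects are of different orders.

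The paper's proof runs the trade-off the other way: it builds a competitor with \emph{unchanged} kinetic energy and \emph{strictly smaller} potential energy. Concretely, writing $j_0=2k$ (the odd case is symmetric), on $[t_1,t_2]$ one reflects the $x$-coordinate of $m_k$ about the vertical line $x=x_k(t_2)$, setting $z^{\ep}_k(t)=\bigl(2x_k(t_2)-x_k(t)\bigr)+iy_k(t)$; on $[0,t_1]$ one translates $m_k$ left by $2\ep$ where $\ep=x_k(t_1)-x_k(t_2)>0$; and one rigidly translates the blocks $\{j<k\}\cup\{j>N-1-k\}$ left by $2\ep$ while leaving $\{k+1,\dots,N-1-k\}$ fixed. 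Reflection and translation are isometries, so $\ac_K$ is exactly preserved. The point is that the monotone constraints pin down the horizontal ordering of all the bodies at every time, and with this ordering one checks directly that $|z^{\ep}_j(t)-z^{\ep}_l(t)|\ge|z_j(t)-z_l(t)|$ for every pair and every $t\in[0,1/2]$, with strict inequality somewhere; hence $\ac_U$ strictly decreases. This use of the monotone constraints to make every pairwise distance move the right way is the missing idea in your proposal.
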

	
\begin{proof}
	For simplicity, let $z = \zo$. As $z \in \lo$, by the definition of monotone constraints, it is enough to show that, for any $0 \le t_1 \le t_2 \le 1/2$, 
	$$ x_0(j/2+t_1) \le x_0(j/2+t_2), \;\; \forall j \in \N. $$
	By a contradiction argument, let's assume there exist a $j_0 \in \N$ and $0 \le t_1 < t_2 \le 1/2$, such that
	$$ x_0(j_0/2 +s_1) >  x_0(j_0/2 +s_2) , \;\; \text{ for any } t_1 \le s_1 < s_2 \le t_2. $$
	By \eqref{eq: z0 zj zN-j}, depend on $j_0$ is even or odd, this is equivalent to 
	\begin{equation*}
	 \begin{cases}
	 x_{k}(s_1) > x_{k}(s_2), \; &\text{ if } j_0 =2k, \\
	 x_{N-k}(1/2 - s_1) > x_{N-k}(1/2-s_2), \; &\text{ if } j_0 =2k-1,
	 \end{cases} \; \text{ for any } t_1 \le s_1 < s_2 \le t_2.
	 \end{equation*} 
	Let $\ep= x_0(j_0/2 +s_1)-x_0(j_0/2 +s_2)>0$. We discuss it in two cases correspondingly. 

	\emph{Case 1}: $j_0$ is even ($j_0=2k$). We define a new loop $\ze \in \lo$ as following:
	$$ \ze_k(t) = \begin{cases}
	z_k(t)-2\ep, \;\; & \forall t \in [0, t_1], \\
	\big( 2x_k(t_2)-x_k(t) \big)+iy_k(t), \;\; & \forall t \in [t_1, t_2], \\
	z_k(t), \;\; & \forall t \in [t_2, 1/2], 
	\end{cases} $$
	\begin{equation*}
	\ze_j(t) = \begin{cases}
	 z_j(t), \;\; & \text{if } j \in \{k+1, \dots, N-1-k\}, \\
	 z_j(t)- 2\ep, \;\; & \text{if } j \in \N \setminus \{k, \dots, N-1-k\}, 
	\end{cases}
	\;\; \forall t \in [0, 1/2].
	\end{equation*}
	By the above definition of $\ze$,  $\ac_K(\ze) = \ac_K(z)$. Meanwhile by the monotone constraints, $\ac_U(\ze)<  \ac_U(z)$. Therefore $\ac(\ze) < \ac(z)$, which is absurd. 

	\emph{Case 2}: $j_0$ is odd ($j_0 =2k-1$). Similarly we define a new loop $\ze \in \lo$ as following: 
	$$ \ze_{N-k}(t) = \begin{cases}
	z_{N-k}(t), \;\; & \forall t \in [0, 1/2-t_2], \\
	\big( 2x_{N-k}(1/2-t_2)-x_{N-k}(t)\big)+iy_{N-k}(t), \;\; & \forall t \in [1/2-t_2, 1/2-t_1], \\
	z_{N-k}(t)-2\ep, \;\; & \forall t \in [1/2-t_1, 1/2], 
	\end{cases} $$
	\begin{equation*}
	\ze_j(t) = \begin{cases}
	 z_j(t), \;\; & \text{if } j \in \{k, \dots, N-1-k\}, \\
	 z_j(t)- 2\ep, \;\; & \text{if } j \in \N \setminus \{k, \dots, N-k\}, 
	\end{cases}
	\;\; \forall t \in [0, 1/2].
	\end{equation*}
	The rest is the same as in \emph{Case 1}. 
\end{proof}

Next we need to exclude the degenerate case, where the masses always stay on a single vertical line.
\begin{lm}
	\label{degenerate} For any $\om \in \Om_N$, if $\zo \in \lo$ is an action minimizer of $\ac$ in $\lo$, then $x^{\om}_0(N/2) - x^{\om}_0(0) > 0.$
\end{lm}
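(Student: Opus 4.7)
The plan is to argue by contradiction: suppose $x_0^\om(N/2) = x_0^\om(0)$. Together with \eqref{0} this forces $x_0^\om(0) = x_0^\om(N/2) = 0$, and by Lemma \ref{nondecreasing} combined with the $D_N$-symmetries (which make $x_0^\om$ even in $t$ via \eqref{eq: symm 2} and symmetric about $N/2$ via $z_0(s)=\bar z_0(N-s)$), one concludes $x_0^\om\equiv 0$ on $\rr$. The minimizer is therefore collinear: $\zo_j(t)=iy_0(j+t)$ for a single real-valued function $y_0$. Moreover $y_0$ is odd and satisfies $y_0(s)=-y_0(N-s)$, so $y_0(0)=y_0(N/2)=0$; hence $\zo_0$ passes through the origin at $t=0$ and $t=N/2$, and when $N$ is even this further yields a forced binary collision $\zo_0(0)=\zo_{N/2}(0)=0$.

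I will derive the contradiction by constructing a horizontal perturbation in $\lo$ with strictly smaller action. Take
\begin{equation*}
\ze_j(t)=\zo_j(t)+\ep\,a_0(j+t),\quad \ep>0,
\end{equation*}
where $a_0:\rr\to\rr$ is real-valued, even, periodic of period $N$, satisfies $a_0(s)=a_0(N-s)$, is non-decreasing on $[0,N/2]$, and obeys $a_0(0)<0<a_0(N/2)$; a concrete choice is $a_0(t)=-\cos(2\pi t/N)$. These properties ensure $\ze\in\lo$: the evenness and reflection give the $D_N$-equivariance via \eqref{symmetry}, the monotonicity of $a_0$ together with $\ep a_0(0)<0<\ep a_0(N/2)$ enforce the monotone constraints and \eqref{0}, and the $\om$-topological constraints are untouched because the perturbation only affects real parts.

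The heart of the proof is estimating $\ac(\ze)-\ac(\zo)$ for small $\ep>0$. Because $\dot{\zo}_j=i\dot y_j$ is purely imaginary and $\dot a_j$ is real, $|\dot{\ze}_j|^2=\dot y_j^2+\ep^2\dot a_j^2$, so the kinetic increase is exactly $\tfrac{\ep^2}{2}\int_0^{1/2}\sum_j\dot a_j^2\,dt=O(\ep^2)$. For the potential, $|\ze_j-\ze_k|^2=(y_j-y_k)^2+\ep^2(a_j-a_k)^2$, and the elementary bound
\begin{equation*}
\frac{1}{|y_j-y_k|}-\frac{1}{|\ze_j-\ze_k|}\ge\frac{\ep^2(a_j-a_k)^2}{2\bigl((y_j-y_k)^2+\ep^2(a_j-a_k)^2\bigr)^{3/2}}
\end{equation*}
provides a lower bound on the potential decrease. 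When $N$ is even, set $p_0:=|a_0(0)-a_0(N/2)|>0$. Since $\zo\in H^1$, Sobolev embedding in one variable gives the H\"older bound $|y_0(t)-y_{N/2}(t)|\le C|t|^{1/2}$ near $t=0$, so the measure of $\{t\in[0,1/2]:|y_0-y_{N/2}|<\ep p_0\}$ is at least of order $\ep^2$. On this set the integrand in the bound above is $\ge c/\ep$, hence the potential decrease is at least of order $\ep$, which strictly dominates the $O(\ep^2)$ kinetic increase. Therefore $\ac(\ze)<\ac(\zo)$ for small $\ep>0$, contradicting minimality.

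The main obstacle is the case $N$ odd: there $y_0(0)=y_0(N/2)=0$ forces only one mass (namely $\zo_0$) to pass through the origin, so no binary coincidence occurs and both the kinetic and potential variations are honestly $O(\ep^2)$. The sign of the resulting quadratic form $\int\sum_j\dot a_j^2\,dt-\int\sum_{j<k}(a_j-a_k)^2/|y_j-y_k|^3\,dt$ is not immediate from symmetry considerations alone, and extracting a negative direction will require either a finer choice of $a_0$ tailored to the minimizer $y_0$, or an auxiliary argument exploiting the instability of 1D collinear configurations under 2D Newtonian perturbations. This constrained-Hessian analysis is the technically delicate step.
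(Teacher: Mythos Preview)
Your argument for $N$ even is correct and rather elegant: the global cosine perturbation, orthogonal to the collinear minimizer, gives an $O(\ep^2)$ kinetic cost while the forced collision $\zo_0(0)=\zo_n(0)$ together with the $H^1$-H\"older estimate yields an $O(\ep)$ potential gain. This is genuinely different from the paper's proof, which instead invokes a local deformation lemma near the isolated collision (built from Sundman estimates, a blow-up argument, and a comparison with the parabolic homothetic solution). Your route is more elementary in the even case and avoids all of that machinery.

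The gap is in your treatment of $N$ odd. You conclude that ``no binary coincidence occurs'' and therefore turn to a second-variation analysis; but this premise is false. Even for $N=2n+1$, the collinear minimizer must contain a collision on $[0,1/2]$, and the mechanism is the $\om$-topological constraints, not the bare $D_N$-symmetry. Assuming $\om_1=1$ (otherwise reflect), one has $y_0(0)=0\le y_0(1/2)$; if all $\om_j=1$ then $y_n(0)\ge 0=y_n(1/2)$, so $y_0-y_n$ changes sign on $[0,1/2]$; if instead $\om_{j_0}=1$ and $\om_{j_0+1}=-1$ for some $j_0$, then \eqref{eq: z0 zj zN-j} forces some $y_k$ to go from $\ge 0$ to $\le 0$ across $[0,1/2]$, and again $y_0-y_k$ changes sign. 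In every case the intermediate value theorem produces $t_0\in[0,1/2]$ and $k\ne 0$ with $z_0(t_0)=z_k(t_0)$. This is exactly what the paper does (in the Appendix), after which it applies its local deformation lemmas at $t_0$; note that six subcases arise depending on $k$ and whether $t_0\in\{0,1/2\}$ or $t_0\in(0,1/2)$, because the deformed path may violate the monotone constraints and has to be repaired.

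Once you see that a collision is forced, your global-perturbation strategy can in principle be salvaged, but not with the fixed profile $a_0(t)=-\cos(2\pi t/N)$: the collision now occurs at an unknown $t_0$ between $m_0$ and an unknown $m_k$, and for your estimate you need $a_0(t_0)\ne a_0(k+t_0)$, which can fail (e.g.\ whenever $t_0$ and $k+t_0$ are symmetric about $N/2$). So either you must tailor $a_0$ to the location of the collision---at which point you are essentially doing a local deformation---or follow the paper and build the deformation locally from the start. The constrained-Hessian route you sketch is the wrong direction here; the decisive input is the existence of a collision, not a sign computation for a quadratic form.
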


Our approach is to show that in the degenerate case, $\zo$ becomes a collinear solution, which contains at least one isolated collision. Then by local deformations near the isolated collision, we can find another loop from $\lo$, whose action value is strictly smaller than $\zo$'s, which gives us a contradiction. The proof is technical and lengthy, which will be given in the Appendix.   

With the above two lemmas. The fact that $\zo$ satisfies the strictly monotone constraints will be established by the following two lemmas. 

\begin{lm}
\label{lm: even strict} When $N=2n$, for any $\om \in \Om_N$, if $\zo \in \lo$ is an action minimizer of $\ac$ in $\lo$, then $x^{\om}_0(t_1) < x^{\om}_0(t_2)$ for any $0 \le t_1 < t_2 \le N/2$. 
\end{lm}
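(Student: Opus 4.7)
The plan is a proof by contradiction. Suppose $x^{\om}_0(t_1') = x^{\om}_0(t_2')$ for some $0 \le t_1' < t_2' \le N/2$; by Lemma~\ref{nondecreasing}, $x^{\om}_0$ is then constant on a maximal nontrivial subinterval $[t_1,t_2]\subset[0,N/2]$ with constant value $c$, and Lemma~\ref{degenerate} prevents $[t_1,t_2]$ from being all of $[0,N/2]$. After shrinking $[t_1,t_2]$ so that it lies inside a single half-integer subinterval $[j/2,(j+1)/2]$, the relation \eqref{eq: z0 zj zN-j} produces an index $k \in \N$ and a subinterval $[s_1,s_2]\subset[0,1/2]$ with $s_1<s_2$ on which $x^{\om}_k \equiv c$. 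In the even case $N=2n$, the central pairing $x_{n-1}(1/2)=x_n(1/2)$ in \eqref{eq: monotone 2} combined with Lemma~\ref{degenerate} can be used to arrange that $c$ is strictly interior to $[x^{\om}_k(0),x^{\om}_k(1/2)]$, possibly after passing to an adjacent column if the initial choice of $k$ sits at a chain equality.

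Next, I would run a bump variation. For a smooth $\eta \in C^{\infty}_c((s_1,s_2))$ and small $\ep \in \rr$, define a competitor $z^{\ep} \in \lo$ by setting
\[
x^{\ep}_k(t) := x^{\om}_k(t) + \ep\,\eta(t), \qquad t\in[0,1/2],
\]
leaving all other coordinates of $z^{\om}$ unchanged on $[0,1/2]$, and then extending to a full loop via the $D_N$-symmetries \eqref{chore eq} and \eqref{eq: symm 1}. Because $c$ is strictly interior, $z^{\ep} \in \lo$ for small $|\ep|$, and because $\dot x^{\om}_k \equiv 0$ on the support of $\eta$ the kinetic cross term vanishes, giving
\[
\ac(z^{\ep}) - \ac(z^{\om}) = \ep \int_{s_1}^{s_2} \eta(t)\,\partial_{x_k}U\big(z^{\om}(t)\big)\,dt + O(\ep^2).
\]
Allowing $\eta$ to be arbitrary and $\ep$ of either sign, the minimality of $z^{\om}$ forces $\partial_{x_k}U(z^{\om}(t)) \equiv 0$ on $[s_1,s_2]$.

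Finally, I would propagate this identity to derive the contradiction. Away from collisions, $t \mapsto \partial_{x_k}U(z^{\om}(t))$ is real analytic, so the vanishing extends to the entire connected collision-free subinterval around $[s_1,s_2]$; combined with the Euler-Lagrange identity $\ddot x^{\om}_k = -\partial_{x_k}U(z^{\om}(\cdot))$, this keeps $x^{\om}_k$, and hence by \eqref{chore eq} also $x^{\om}_0$, horizontally flat across a macroscopic interval. Iterating the same variational argument at the neighboring indices and exploiting the chain \eqref{eq: monotone 2} — whose central equality $x_{n-1}(1/2)=x_n(1/2)$ specific to $N=2n$ is rigid — collapses every column of the chain to a single vertical line, contradicting Lemma~\ref{degenerate}. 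In the case where collisions block the analytic continuation, I would instead follow the local-deformation strategy announced for Lemma~\ref{degenerate} in the Appendix, reducing to an isolated collision inside the flat region and constructing a competitor in $\lo$ with strictly smaller action. The main obstacle is exactly this last step: making the analytic-continuation / chain-collapse cascade rigorous despite possibly wild collisions, and verifying that in the even case the central pairing in \eqref{eq: monotone 2} really does force the cascade to terminate in a contradiction, a feature not shared by the odd-$N$ case which is treated separately.
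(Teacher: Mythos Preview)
Your approach diverges substantially from the paper's, and the propagation step in your third paragraph has a genuine gap. To invoke real analyticity of $t\mapsto \partial_{x_k}U(z^{\om}(t))$ you need $z^{\om}$ to solve the full $N$-body system near $[s_1,s_2]$, but the minimizer satisfies the Euler--Lagrange equation for $x_j$ only where the monotone constraint on body $j$ is inactive; some other body may well sit at its boundary value on part of $[s_1,s_2]$, so analyticity is simply not available. The ensuing ``chain-collapse cascade'' is asserted, not argued, and the preliminary claim that $c$ can be taken strictly interior (``possibly after passing to an adjacent column'') is likewise unproved. More conceptually, a bump $\eta$ supported in $(s_1,s_2)$ can at best yield the \emph{local} identity $\partial_{x_k}U\equiv 0$ on that interval; this says nothing about the rest of $[0,1/2]$ and gives no direct leverage on the global gap furnished by Lemma~\ref{degenerate}.

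The paper bypasses all of this with a single explicit deformation that is global in the body index. On the flat interval $[t_1,t_2]$ it ramps $x_k$ linearly from $x_k-\ep$ to $x_k$, and simultaneously translates \emph{every} body on one side of $k$ (indices in $\N\setminus\{k,\dots,N-1-k\}$) by the constant $-\ep$ on all of $[0,1/2]$; the monotone and $\om$-constraints are preserved, so $z^\ep\in\lo$. Since $\dot x_k\equiv 0$ on $[t_1,t_2]$, the kinetic change is exactly $\ep^2/(2(t_2-t_1))$, i.e.\ second order. The potential gain is harvested not on $[t_1,t_2]$ but on a fixed interval $[0,t_0]$ where Lemma~\ref{degenerate} supplies $x_n(t)-x_0(t)\ge\dl_1>0$ (here is where $N=2n$ enters, via $x_0^{\om}(N/2)=x_n(0)$): the rigid shift of $m_0$ away from $m_n$ produces a first-order drop $-C\ep+o(\ep)$. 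Hence $\ac(z^\ep)<\ac(z)$ for small $\ep$, a contradiction in one step, with no analyticity, no cascade, and no collision analysis.
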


\begin{proof}
For simplicity let $z =\zo$. By Lemma \ref{nondecreasing}, it is enough to show that, 
$$ x_0(j/2 +t_1) \ne x_0(j/2 +t_2), \;\; \text{ for any } 0 \le t_1 < t_2 \le 1/2, \;\; \forall j \in \N. $$
By a contradiction argument, let's assume there exist $j_0 \in \N$, and $ 0 \le t_1 < t_2 \le 1/2$, such that 
\begin{equation}
\label{eq: x0 equal} x_0(j_0/2 +t) = x_0(j_0/2 +t_1), \;\; \forall t \in [t_1, t_2].
\end{equation}

By Lemma \ref{degenerate}, there exist $t_0, \dl_1>0$ small enough, such that 
\begin{equation}
\label{eq: dl1} x_n(t) - x_0(t) \ge \dl_1, \;\; \forall t \in [0, t_0].
\end{equation}
Meanwhile we can always find a $\dl_2>0$, such that 
\begin{equation}
\label{eq: dl2} |z_n(t) - z_0(t)| \le \dl_2^{-1}, \;\; \forall t \in [0, 1/2].
\end{equation}
Depending on the value of $j_0$, two different cases need to be considered. 

\emph{Case 1}: $j_0$ is even ($j_0=2k$). Then \ref{eq: x0 equal} implies 
$$ x_k(t) = x_k(t_1), \;\; \forall t \in [t_1, t_2].$$
Choose a $\ep>0$ small enough, we define a new loop $\ze \in \lo$ as following:
\begin{equation}
\label{eq: ze 1} \ze_k(t) = \begin{cases}
z_k(t) -\ep, \;\; & \forall t \in [0, t_1], \\
z_k(t)- \frac{t_2-t}{t_2-t_1} \ep, \;\; & \forall t \in [t_1, t_2], \\
z_k(t), \;\; &\forall t \in [t_2, 1/2];
\end{cases}
\end{equation}
\begin{equation}
\label{eq: ze 2} \ze_j(t) = \begin{cases}
z_j(t), \;\; &\text{if } j \in \{k+1, \dots, N-1-k \}, \\
z_j(t)-\ep, \;\; &\text{if } j \in \N \setminus \{k, \dots, N-1-k\},
\end{cases}
\;\; \forall t \in [0, 1/2]. 
\end{equation}
By the above definition of $\ze$ and \eqref{eq: x0 equal}, 
\begin{equation}
\label{eq: AK} \ac_K(\zd^{\ep}) - \ac_k(\zd) = \ey \int_{t_1}^{t_2} |\zd^{\ep}(t)|^2 - |\zd|^2 \, dt = \ey \int_{t_0}^{t_1} \frac{\ep^2}{(t_2-t_1)^2} \,dt = \frac{\ep^2}{2(t_2-t_1)}. 
\end{equation}
This shows the change in kinetic energy. For potential energy, notice that 
\begin{equation}
\label{eq: ze ge z} |\ze_j(t) - \ze_l(t)| \ge |z_j(t)- z_l(t)|, \;\; \forall t \in [0, 1/2], \; \forall \{j \ne l\} \subset \N. 
\end{equation}
When $k >0$, by the definition of $\ze$ and \eqref{eq: dl1}, for any $t \in [0, t_0]$, 
\begin{align*}
|\ze_n(t)-\ze_0(t)|^2 & = |x_n(t) -x_0(t) + \ep + i(y_n(t)-y_0(t))|^2 \\
   					  & = |z_n(t) -z_0(t)|^2 + 2(x_n(t)-x_0(t))\ep + \ep^2 \\
   					  & \ge |z_n(t) -z_0(t)|^2 + 2\dl_1 \ep + \ep^2.
\end{align*}		  
Together with \eqref{eq: dl2}, for any $t \in [0, t_0]$ 
\begin{align*}
|\ze_n(t) & -\ze_0(t)|^{-1}- |z_n(t)-z_0(t)|^{-1} \\
		  & \le \frac{1}{|z_n(t)-z_0(t)|} \Big( ( 1+ \frac{2\dl_1 \ep}{|z_n(t)-z_0(t)|^2}+\frac{\ep^2}{|z_n(t)-z_0(t)|^2})^{-\ey}-1 \Big) \\
		  & \le -\frac{\dl_1 \ep}{|z_n(t)-z_0(t)|^3} + o(\ep) \le -\dl_1 \dl_2^3 \ep + o(\ep).
\end{align*}
Combine this with \eqref{eq: ze ge z}, 
\begin{align}
\label{eq: AU 1} \begin{split}
\ac_U(\ze) - \ac_U(z) & \le \int_0^{t_0} |\ze_n(t) - \ze_0(t)|^{-1} - |z_n(t) - z_0(t)|^{-1} \,dt \\
					  & \le \int_0^{t_0} -\dl_1 \dl_2^3 \ep + o(\ep) \, dt = -C_1(\dl_1, \dl_2, t_0) \ep + o(\ep). 
\end{split}
\end{align}

When $k=0$, if $t_0 < t_1$, the above estimates for potential energy will still hold. However if $t_0> t_1$, then things are slightly different. Now let's assume $k=0$ and $t_0> t_1$, by \eqref{eq: dl1}, for any $t \in [t_1, t_3]$, where $t_3 = \min\{t_0, t_2\}$, 
\begin{align*}
 |\ze_n(t)-\ze_0(t)|^2 & = |x_n(t) -x_0(t) + \frac{t_2-t}{t_2-t_1}\ep + i(y_n(t)-y_0(t))|^2\\
                       & = |z_n(t)-z_0(t)|^2 + 2(x_n(t)-x_0(t))\frac{t_2-t}{t_2-t_1}\ep + \big(\frac{t_2-t}{t_2-t_1}\big)^2 \ep^2 \\
                       & \ge |z_n(t)-z_0(t)|^2 +2\dl_1\ep\frac{t_2-t}{t_2-t_1} + o(\ep).
 \end{align*}	 
Combine the above with \eqref{eq: dl2}, we get, for any $t \in [t_1, t_3]$,
\begin{align*}
|\ze_n(t) & -\ze_0(t)|^{-1}- |z_n(t)-z_0(t)|^{-1} \\
          & \le \frac{1}{|z_n(t)-z_0(t)|} \Big( \big(1+ \frac{2\dl_1 \ep \frac{t_2-t}{t_2-t_1}}{|z_n(t)-z_0(t)|^2}\big)^{-\ey} -1 \Big) \\
          & \le -\frac{\dl_1 \ep \frac{t_2-t}{t_2-t_1}}{|z_n(t)-z_0(t)|^3} + o(\ep) \le -\dl_1 \dl_3^3 \ep \frac{t_2-t}{t_2-t_1}+ o(\ep).
\end{align*}
As a result 
\begin{align}
\label{eq: AU 2} \begin{split}
\ac_U(\ze) &- \ac_U(z)  \le \int_{t_1}^{t_3} |\ze_n(t) -\ze_0(t)|^{-1}- |z_n(t)-z_0(t)|^{-1} \,dt \\
                      & \le \int_{t_1}^{t_3} -\dl_1 \dl_3^3 \ep \frac{t_2-t}{t_2-t_1}+ o(\ep) \, dt \le -C_2(\dl_1, \dl_2, t_0, t_1, t_2)\ep + o(\ep).
\end{split}
\end{align}
Together \eqref{eq: AK}, \eqref{eq: AU 1} and \eqref{eq: AU 2} show that, for $\ep$ small enough 
\begin{equation*}
\ac(\ze) - \ac(z) \le -C_3(\dl_1, \dl_2, t_0, t_1, t_2)\ep + \frac{\ep^2}{2(t_2 -t_1)} + o(\ep) <0,
\end{equation*}
which is a contradiction. This finishes our proof of \emph{Case 1}. 

\emph{Case 2}: $j_0$ is odd ($j_0= 2k-1$). Then \ref{eq: x0 equal} implies 
$$ x_{N-k}(t) = x_{N-k}(1/2-t_1), \;\; \forall t \in [1/2-t_2, 1/2-t_1].$$
For $\ep>0$ small enough, we define a new loop $\ze \in \lo$ as following:

\begin{equation}
\label{eq: ze 1 odd} \ze_{N-k}(t) = \begin{cases}
z_{N-k}(t) +\ep, \;\; & \forall t \in [0, 1/2-t_2], \\
z_{N-k}(t)+ \frac{1/2-t_1-t}{t_2-t_1} \ep, \;\; & \forall t \in [1/2-t_2, 1/2-t_1], \\
z_{N-k}(t), \;\; &\forall t \in [1/2-t_1, 1/2];
\end{cases}
\end{equation}
\begin{equation}
\label{eq: ze 2 odd} \ze_j(t) = \begin{cases}
z_j(t)+\ep, \;\; &\text{if } j \in \{k, \dots, N-1-k \}, \\
z_j(t), \;\; &\text{if } j \in \N \setminus \{k, \dots, N-k\},
\end{cases}
\;\; \forall t \in [0, 1/2]. 
\end{equation}
By estimates similarly as in \emph{Case 1}, we can show $\ac(\ze)-\ac(z)<0$ for $\ep$ small enough, which is a contradiction. 

\end{proof}

\begin{lm}
\label{lm: odd strict} When $N=2n+1$, for any $\om \in \Om_N$, if $\zo \in \lo$ is an action minimizer of $\ac$ in $\lo$, then $x^{\om}_0(t_1) < x^{\om}_0(t_2)$ for any $0 \le t_1 < t_2 \le N/2$.
\end{lm}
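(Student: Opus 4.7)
The strategy is identical to Lemma \ref{lm: even strict}, with adjustments for the unpaired middle mass $z_n$ that arises when $N = 2n+1$. Argue by contradiction: assume $x_0^\om(j_0/2 + t)$ is constant on some $[t_1, t_2]$. By \eqref{eq: z0 zj zN-j} with $\nf = n$, this means either $x_k$ is constant on $[t_1, t_2]$ (Case 1: $j_0 = 2k$, $k \in \{0, \dots, n\}$) or $x_{N-k}$ is constant on $[1/2 - t_2, 1/2 - t_1]$ (Case 2: $j_0 = 2k - 1$, $k \in \{1, \dots, n\}$). The gap estimates analogous to \eqref{eq: dl1}--\eqref{eq: dl2} are supplied by Lemma \ref{degenerate}: combined with \eqref{eq: z0 zj zN-j}, it gives $x_n(1/2) > x_0(0)$; the monotone chain \eqref{eq: monotone 3} must therefore have a strict inequality somewhere, which by continuity produces $t_0, \dl_1, \dl_2 > 0$ and a suitable pair of indices realising the gap (now possibly near $t = 1/2$ rather than $t = 0$).

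For Case 1 with $k \in \{0, \dots, n-1\}$ and for Case 2 with $k \in \{1, \dots, n\}$, the deformations \eqref{eq: ze 1}--\eqref{eq: ze 2} and \eqref{eq: ze 1 odd}--\eqref{eq: ze 2 odd} apply verbatim because the interior group $\{k+1, \dots, N-1-k\}$ is nonempty, and the pair realising the gap lies on opposite sides of it. Repeating the estimates \eqref{eq: AK}--\eqref{eq: AU 2} with only cosmetic relabeling yields $\ac(\ze) < \ac(z)$ for small $\ep$, contradicting the minimality of $z = \zo$.

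The only essentially new subcase is Case 1 with $k = n$, where the interior group is empty and \eqref{eq: ze 1}--\eqref{eq: ze 2} degenerates. The substitute deformation shifts every $z_j$ with $j \ne n$ uniformly by $-\ep$ on $[0, 1/2]$ while applying the piecewise shift of \eqref{eq: ze 1} to $z_n$ alone; monotonicity is preserved because on $[0, t_1]$ the configuration is rigidly translated by $-\ep$, and on $[t_2, 1/2]$ only $z_n$ is displaced by $+\ep$ relative to the others (compatible with $x_n$ being the largest coordinate by \eqref{eq: monotone 3}). The kinetic cost is again $\ep^2 / (2(t_2-t_1))$. On $[t_2, 1/2]$, each pairwise distance $|z_n - z_j|$ is pushed outward to first order in $\ep$, producing a potential drop proportional to $\sum_{j \ne n} (x_n - x_j) / |z_n - z_j|^3$. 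The strict positivity of this sum on a positive-measure subset of $[t_2, 1/2]$ is secured via Lemma \ref{degenerate}: if, on the contrary, $x_n(t) = x_j(t)$ for every $j \ne n$ and every $t$ in that interval, then via \eqref{eq: z0 zj zN-j} one deduces that $x_0$ is constant on $[t_2, N/2]$, and combined with the assumed constancy on $[n+t_1, n+t_2]$ this extended constant segment can be iteratively attacked until it contradicts $x_0(N/2) > x_0(0)$.

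The principal technical obstacle is precisely this $k = n$ subcase, which has no analogue in the even setting and demands both a tailored deformation and a more delicate Lemma-\ref{degenerate}-based gap argument; the remaining cases are straightforward adaptations of Lemma \ref{lm: even strict}.
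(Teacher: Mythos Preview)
Your overall approach matches the paper's: contradiction assumption, case split on parity of $j_0$, same deformations \eqref{eq: ze 1}--\eqref{eq: ze 2} and \eqref{eq: ze 1 odd}--\eqref{eq: ze 2 odd} for the generic subcases, and correct identification of $k=n$ (even $j_0=2n$) as the one subcase with no analogue in Lemma~\ref{lm: even strict}. Your observation that the gap \eqref{eq: dl1} may now occur near $t=1/2$ instead of $t=0$ also matches the paper's dichotomy \eqref{eq: odd strict 1}/\eqref{eq: odd strict 2}.

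The treatment of the $k=n$ subcase, however, differs from the paper and has a gap. You integrate the potential drop over $[t_2,1/2]$ and then argue by a nested contradiction: if $\sum_{j\ne n}(x_n-x_j)/|z_n-z_j|^3$ vanishes identically there, you deduce constancy of $x_0$ on $[t_2,N/2]$ and claim this can be ``iteratively attacked until it contradicts $x_0(N/2)>x_0(0)$.'' But constancy on $[t_2,N/2]$ does not by itself force constancy on $[0,t_2]$, so Lemma~\ref{degenerate} is not directly contradicted; and noting that $[n+t_1,n+t_2]\subset[t_2,N/2]$ adds nothing. The intended ``iteration'' is not spelled out. (Also, if $t_2=1/2$ your integration interval is a single point; you would need to first shrink $t_2$.)

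The paper avoids this by a cleaner bootstrap. It orders the argument so that Case~3 (odd $j_0$) and Case~4 with $k<n$ are disposed of \emph{before} $k=n$ is considered. At that point every link in the chain \eqref{eq: monotone 3} except the last is already known to be strict, which yields a uniform lower bound $x_n(t)-x_0(t)\ge\delta_3>0$ on the \emph{entire} fundamental domain $[0,1/2]$ (equation \eqref{eq: odd dl 3}). The potential drop is then computed on $[t_1,t_2]$ itself --- not on $[t_2,1/2]$ --- against the single pair $(0,n)$, giving a first-order gain $-C_4\varepsilon$ that beats the $O(\varepsilon^2)$ kinetic cost. No nested contradiction or ``iterative attack'' is needed; the already-proven subcases supply the gap estimate directly.
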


\begin{proof} 
Let $z=\zo$. Like the proof of Lemma \ref{lm: even strict}, by a contradiction argument, let's assume \eqref{eq: x0 equal} holds for some $j_0 \in \N$ and $0 \le t_1 < t_2 \le 1/2$.

Notice that by Lemma \ref{nondecreasing} and \ref{degenerate}, 
\begin{equation*}
\max \{ x_0(n) - x_0(0), \; x_0(n+1/2) -x_0(1.2) \} >0.
\end{equation*}
Hence there exist $t_0, \dl_1>0$, such that one of the following must hold
\begin{align}
\label{eq: odd strict 1} x_n(t) -x_0(t) \ge \dl_1, \;\; & \forall t \in [0, t_0];\\
\label{eq: odd strict 2} x_n(t) -x_0(t) \ge \dl_1, \;\; & \forall t \in [1/2-t_0, 1/2].
\end{align}

First let's assume \eqref{eq: odd strict 1} holds. Again we will consider two different cases depending on the value of $j_0$ (to separate them from the two cases consider in the proof of Lemma \ref{lm: even strict}, we will count them as \emph{Case 3} and \emph{Case 4}).  

\emph{Case 3}: $j_0$ is odd ($j_0= 2k-1$). Let $\ze \in \lo$ be defined as in \emph{Case 2}, then a contradiction can reached like there.

\emph{Case 4}: $j_0$ is even ($j_0=2k$). In this case $0 \le k \le n$. Let $\ze \in \lo$ be define as in \emph{Case 1}. 
When $k<n$, a contradiction can be reached by the same argument given there. When $k=n$, more needs to be said. First the set $\{k+1, \dots, N-1-k\}$ in \eqref{eq: ze 2} is empty. By what we have proven so far in \emph{Case 4} and in \emph{Case 3}, there is a $\dl_3>0$, such that 
\begin{equation} 
\label{eq: odd dl 3} x_n(t) - x_0(t) \ge \dl_3, \;\; \forall t \in [0, 1/2]. 
\end{equation}
With \eqref{eq: dl2} still holds, by similarly computations given in \emph{Case 1},
\begin{align*}
\ac_U(\ze) -\ac_U(z) & \le \int_{t_1}^{t_2} |\ze_n(t)-\ze_0(t)|^{-1} - |z_n(t)-z_0(t)|^{-1}\,dt \\
                 & \le -C_4(t_1, t_2, \dl_1, \dl_3)\ep + o(\ep).
\end{align*}
Since the change in kinetic energy is still given by \eqref{eq: AK}, we get $\ac(\ze) -\ac(z)<0$ for $\ep$ small enough, which is absurd. This finishes our proof of \emph{Case 4}. 

Now let's assume \eqref{eq: odd strict 2} hold. The proof is almost the same as above. We will not repeat the details here. 

\end{proof}

Let $\zo$ be an action minimizer of $\ac$ in $\lo$ (for simplicity, we set $z =\zo$ in the rest of this section). By Lemma \ref{lm: even strict} and \ref{lm: odd strict}, it satisfies the strictly monotone constraints. As a result, if $z$ is collision-free, it must be a solution of \eqref{N body}. Notice that Lemma \ref{lm: even strict} and \ref{lm: odd strict} already imply that $z(t), t \in (0, 1/2),$ must be collision-free, and the only possible collisions are binary collisions at $t=0$ or $t=1/2$ between certain pairs of masses determined by the symmetric constraints. To be precise, when $t =0$, a binary collision can only happen between the pairs of masses with the following indices
$$ \{1, N-1 \}, \{2, N-2\}, \dots, \{ \nf, N- \nf \}; $$
when $t = 1/2$, between the following pairs
$$ \{0, N-1 \}, \{1, N-2\}, \dots, \{ \nf, N-\nf-1 \}. $$
When $N=2n+1$, $\nf = N-\nf -1 = n$, so in this case, there is no collision between $m_{\nf}$ and $m_{N-\nf-1}$ at $t=1/2$.

In the following, we will show none of the above binary collisions can exist. First let's assume $z_j(0) = z_{N-j}(0)$, for some $j \in \{1, \dots, \nf\}$. Notice that $y_j(0) = y_{N-j}(0)=0$, as $z_j(0) = \zb_{N-j}(0)$. Without loss of generality, we may further assume $z_j(0)= z_{N-j}(0)=0$. 

For any $t \in [0, 1/2]$ and $k \in \{j, N-j \}$, we set
\begin{align}
\label{zhat} \zh(t) &:= \xh(t)+i \hat{y}(t) := (z_j(t) + z_{N-j}(t))/2,\\
\label{wcom} w_k(t) &:= u_k(t) +i v_k(t) := z_k(t)- \zh(t). 
\end{align}
Here $\zh(t)$ is the center of mass of $m_j$ and $m_{N-j}$, and $w_k(t)$ is the relative position of $m_k$ with respect to $\zh$. Put $w_k(t)$ in polar coordinates
\begin{equation}
\label{wpolar}  w_k(t) = \rho_k(t) e^{i \tht_k(t)},
\end{equation}
we have the following two results.
\begin{prop}
\label{prop: sudman} For any $k \in \{j, N-j\}$, when $t>0$ is small enough
$$ \rho_k(t) = C_1 t^{\frac{2}{3}} + o(t^{\frac{2}{3}}), \;\; \dot{\rho}_k(t)= C_2 t^{-\frac{1}{3}} + o(t^{-\frac{1}{3}}).$$
\end{prop}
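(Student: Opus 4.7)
The plan is to reduce the analysis near the isolated binary collision to a perturbed planar Kepler problem. By Lemmas~\ref{lm: even strict} and \ref{lm: odd strict}, every mutual distance $|z_l-z_m|$ with $\{l,m\}\ne\{j,N-j\}$ stays bounded below by some $\dl>0$ on a small interval $[0,\eta]$, so the only singular term in the Newton equations near $t=0$ comes from the pair $\{j,N-j\}$. Since $\hat{z}(t)$ is the average of $z_j$ and $z_{N-j}$, adding the equations \eqref{N body} for these two masses makes the singular contribution cancel, and I would conclude that $\ddot{\hat{z}}(t)=F(t)$ with $F$ continuous and bounded on $[0,\eta]$, so $\hat{z}$ extends as a $C^2$ function through $t=0$. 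Subtracting the two equations and using $z_j-z_{N-j}=2w_j$ (which follows from $w_j+w_{N-j}=0$) gives a perturbed Kepler equation
\begin{equation*}
\ddot w = -\frac{w}{4\rho^3} + G(t), \qquad w:=w_j, \;\; \rho:=|w|,
\end{equation*}
with $G$ bounded on $[0,\eta]$.

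Next I would invoke conservation of the total energy $H=K-U$. All potential terms except $|z_j-z_{N-j}|^{-1}=(2\rho)^{-1}$ and all kinetic terms except those of $m_j,m_{N-j}$ are bounded on $[0,\eta]$, so isolating the blow-up yields
\begin{equation*}
\tfrac12|\dot w|^2 = \frac{1}{4\rho} + h(t),
\end{equation*}
with $h$ bounded. In particular $\rho|\dot w|^2$ is bounded, and since $\dot\rho^2\le|\dot w|^2$ we get $\rho^{1/2}|\dot\rho|\le C$. Integrating this yields the one-sided bound $\rho(t)\le Ct^{\se}$, which already fixes the correct order of magnitude.

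To upgrade this to the sharp asymptotics, I would study the (reduced) angular momentum $\ell(t):=\Im(\bar w\,\dot w)=\rho^2\dot\tht$ of the relative coordinate, where $w=\rho e^{i\tht}$. From the perturbed equation one computes $\dot\ell=\Im(\bar w\, G)=O(\rho)$, so $\ell$ extends continuously to $t=0$ and $|\ell(t)-\ell(0)|=O(t^{5/3})$ by the upper bound on $\rho$. A standard Sundman-type argument shows $\ell(0)=0$: if $\ell(0)\neq 0$, then $\rho^2\ge|\ell|^2/|\dot w|^2$ combined with $|\dot w|^2=O(1/\rho)$ would force $\rho$ to stay bounded below, contradicting $\rho\to 0$. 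Hence $\ell(t)=O(\rho(t))$, which makes the angular kinetic energy $\rho^2\dot\tht^2=\ell^2/\rho^2$ bounded, and therefore
\begin{equation*}
\tfrac12\dot\rho^2 = \tfrac12|\dot w|^2 - \tfrac12\rho^2\dot\tht^2 = \frac{1}{4\rho}(1+o(1)) \quad\text{as }t\to 0^+.
\end{equation*}
Integrating $\dot\rho\sim(2\rho)^{-1/2}$ gives $\tfrac23\rho^{3/2}\sim t/\sqrt{2}$, i.e.\ $\rho(t)=C_1t^{\se}+o(t^{\se})$ with $C_1=(9/8)^{1/3}$, and differentiating gives $\dot\rho(t)=C_2 t^{-1/3}+o(t^{-1/3})$.

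The main obstacle is the angular-momentum step: in the exact Kepler problem one uses conservation of $\ell$, but here $\ell$ is only approximately conserved because of the bounded perturbation $G$ coming from the interaction with the other $N-2$ bodies. One has to control $\dot\ell$ carefully using the a priori bound $\rho=O(t^{\se})$ to close the argument, and to rule out oscillatory behaviour of $\dot\rho$ that would prevent the clean asymptotic form. The rest of the argument is a textbook Sundman--Sperling analysis adapted to the perturbed two-body problem.
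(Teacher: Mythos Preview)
The paper does not give its own proof of this proposition; it simply states ``This is the well-known Sundman's estimate, for a proof see \cite[(6.25)]{FT04}.'' Your sketch reproduces the classical Sundman--Sperling analysis that underlies that reference: isolate the colliding pair, show the center of mass is $C^2$, reduce the relative motion to a bounded perturbation of Kepler, use the energy identity to get the upper bound $\rho=O(t^{2/3})$, show the reduced angular momentum vanishes at the collision so that the angular kinetic energy is negligible, and then read off the radial asymptotics from the energy relation. This is exactly the standard route and is correct.

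Two small remarks. First, the closing phrase ``differentiating gives $\dot\rho(t)=C_2t^{-1/3}+o(t^{-1/3})$'' is informal: one cannot literally differentiate an asymptotic expansion. The honest derivation is to substitute the asymptotic for $\rho$ back into $\dot\rho^2=\tfrac{1}{2\rho}(1+o(1))$, after checking that $\dot\rho>0$ near $0$ (which follows because $\dot\rho^2\to\infty$ forces $\dot\rho$ to have constant sign, and $\rho(0)=0<\rho(t)$ fixes that sign). Second, the step ``$\ell(t)=O(\rho(t))$'' is actually stronger than you need and stronger than what your estimate gives directly: from $\dot\ell=O(\rho)$ and $\rho=O(t^{2/3})$ you get $\ell(t)=O(t^{5/3})$, hence $\ell^2/\rho^2=O(t^2)\to 0$, which is what makes the angular term negligible. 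Neither point affects the validity of the argument.
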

This is the well-known Sundman's estimate, for a proof see \cite[(6.25)]{FT04}.

\begin{prop} \label{prop: angle limit}
For any $k \in \{ j, N-j \}$, there exist finite $\tht_k^+$, satisfying the following: 
\begin{equation}
\label{ang lim} \lim_{t \to 0^+} \tht_k(t) = \tht_k^+, \;\; \lim_{ t \to 0^+} \dot{\tht}_{k}(t) = 0, \;\; \tht_{N-j}^+= \tht_j^+ +\pi.
\end{equation}
\end{prop}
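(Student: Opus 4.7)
The plan is to reduce both cases $k\in\{j,N-j\}$ to a single statement about the relative vector $w_j$, then derive its polar asymptotics near the binary collision. Directly from the definition, $w_{N-j}(t)=-w_j(t)$, so $\rho_{N-j}\equiv\rho_j$ and $\theta_{N-j}\equiv\theta_j+\pi\pmod{2\pi}$. Hence the identity $\theta_{N-j}^+=\theta_j^++\pi$ is automatic once I prove $\theta_j(t)\to\theta_j^+$ and $\dot\theta_j(t)\to 0$ as $t\to 0^+$, and I focus on the case $k=j$.

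First I derive the equation of motion for $w_j$ on a small interval $[0,t_0]$. Using $z_j-z_{N-j}=2w_j$, the singular interaction between $m_j$ and $m_{N-j}$ contributes $-w_j/(4\rho_j^3)$ to $\ddot z_j$. All other interactions in \eqref{N body} involve distances $|z_j-z_l|$ or $|z_{N-j}-z_l|$ with $l\notin\{j,N-j\}$; by the strict monotonicity in Lemmas \ref{lm: even strict} and \ref{lm: odd strict}, $x_l(0)\ne 0$ for every such $l$, so by continuity these distances are bounded below on some $[0,t_0]$ and the associated forces stay uniformly bounded there. The singular contributions cancel in $\ddot{\hat z}$, so subtracting yields
\[
\ddot w_j(t)=-\frac{w_j(t)}{4\,\rho_j(t)^3}+G(t),\qquad \|G\|_{L^{\infty}([0,t_0])}<\infty.
\]
Writing $w_j=\rho_j e^{i\theta_j}$ and isolating the tangential component gives
\[
\frac{d}{dt}\bigl(\rho_j^2\,\dot\theta_j\bigr)=\rho_j\,G_\theta(t),\qquad |G_\theta|\le C_0.
\]
Combined with the upper bound $\rho_j(t)\le C\,t^{2/3}$ from Proposition \ref{prop: sudman}, this produces $|\tfrac{d}{dt}(\rho_j^2\dot\theta_j)|\le C'\,t^{2/3}$, so by Cauchy's criterion $\rho_j^2\dot\theta_j$ admits a finite limit $L$ as $t\to 0^+$, with $|\rho_j^2\dot\theta_j(t)-L|=O(t^{5/3})$.

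The next step is to rule out $L\ne 0$ via finiteness of the action. Since $\zo$ has finite action and $|\dot w_j|^2=\dot\rho_j^{\,2}+\rho_j^2\dot\theta_j^{\,2}$, we have $\int_0^{1/2}\rho_j^2\dot\theta_j^{\,2}\,dt<\infty$. If $L\neq 0$, the lower bound $\rho_j(t)\ge c\,t^{2/3}$ (also from Proposition \ref{prop: sudman}) forces $\rho_j^2\dot\theta_j^{\,2}\sim L^2/\rho_j^2=O(t^{-4/3})$ near $0$, violating integrability; hence $L=0$. Combining $|\rho_j^2\dot\theta_j|=O(t^{5/3})$ with $\rho_j\ge c\,t^{2/3}$ then yields $|\dot\theta_j(t)|=O(t^{1/3})$, so $\dot\theta_j(t)\to 0$ and, since $\dot\theta_j$ is integrable on $(0,t_0]$, $\theta_j(t)=\theta_j(t_0)-\int_t^{t_0}\dot\theta_j(\tau)\,d\tau$ converges to a finite $\theta_j^+$ as $t\to 0^+$. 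The main technical delicacy is the uniform boundedness of $G$ near the collision, which I expect to follow cleanly from the strict monotonicity lemmas; everything afterwards is the standard Sundman/angular-momentum bookkeeping.
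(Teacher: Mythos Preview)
Your argument is correct and is the standard Sundman-type analysis for isolated binary collisions: isolate the singular Kepler force, bound the remainder, study the angular momentum $\rho_j^2\dot\theta_j$, and use finite action to kill the limit $L$. Each step checks out; in particular, your use of Lemmas~\ref{lm: even strict} and~\ref{lm: odd strict} to guarantee that every $z_l$ with $l\notin\{j,N-j\}$ stays a positive distance from the collision point on a short time interval is exactly what is needed to make $G$ bounded, and the integrability contradiction forcing $L=0$ is clean.

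As for comparison: the paper does not give its own proof of this proposition. It simply declares the result well-known and refers the reader to \cite[Proposition~4.2]{Y16b}. So you have supplied a self-contained proof where the paper defers to an outside reference. The only contextual remark worth making is that the cited result is stated for a general isolated binary collision (the boundedness of $G$ coming from the assumption that no third body participates in the collision), whereas you derive this isolation from the strict monotone constraints specific to the minimizer $z^{\omega}$. That is perfectly legitimate here, since Proposition~\ref{prop: angle limit} is invoked only in that context; but if you wanted your argument to match the generality of the cited statement, you would replace the appeal to strict monotonicity by the hypothesis that the collision is isolated.
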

The above proposition is also well-known. It shows that $m_j$ and $m_{N-j}$ approach to the binary collision from two definite directions that are opposition to each other (a proof can be found in \cite[Proposition 4.2]{Y16b}). 

By the strictly monotone constraints, $x_j(t) - x_{N-j}(t) >0$, $\forall t \in (0, 1/2]$. Hence
$$ x_{N-j}(t) - \xh(t) <0 <  x_j(t) - \xh(t), \;\; \forall t \in (0, 1/2].$$
As a result, we may assume
$$ \tht_j(t) \in (-\pi/2, \pi/2), \;\;  \tht_{N-j}(t) \in (\pi/2, 3\pi/2),\;\; \forall t \in (0, 1/2],$$
and this means
$$ \tht_j^+ \in  [-\pi/2, \pi/2], \;\; \tht_{N-j}^+ \in [\pi/2, 3\pi/2]. $$ 

Depending on the values of $\tht_j^+$, two types of deformations will be applied to $z$ to get a contradiction. To make sure the loop we obtained after the deformation is still contained in $\lo$, we need to know the precise value of $\om_{2j}$. Without loss of generality, let's assume $\om_{2j}=1$ in the following. 

\begin{lm}
 \label{thtp1} If $z_j(0)=z_{N-j}(0)$ and $\tht_j^+ \in (-\pi/2, \pi/2]$, then there is a $\ztl \in \lo$ with $\ac(\ztl) < \ac(z).$
\end{lm}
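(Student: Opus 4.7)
The plan is to construct an explicit small perturbation $\ztl$ of $z$ near the collision time $t=0$ that lies in $\lo$ and has strictly smaller action, contradicting the minimality of $z$ established in Proposition \ref{lm 1}.

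First I would construct $\ztl$ by modifying $z_0$ symmetrically around its collision time $s=j$. Fix $\dl>0$ small and a smooth even cut-off $\eta$ supported in $(-\dl,\dl)$ with $\eta(0)=1$, and for $\ep>0$ small set $\ztl_0(s)=z_0(s)+i\ep\,\eta(s-j)$; extend to $\rr$ by $\ztl_0(-t)=\overline{\ztl_0(t)}$ and $N$-periodicity. The evenness of $\eta$ makes the extension consistent with the symmetries of $\Lmd_N^{D_N}$, and in the fundamental domain one obtains
\[
\ztl_j(t)=z_j(t)+i\ep\,\eta(t),\qquad \ztl_{N-j}(t)=z_{N-j}(t)-i\ep\,\eta(t),\qquad t\in[0,\dl],
\]
with all other components unchanged. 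Since only $y$-components are modified, the monotone constraints continue to hold, and $\Im(\ztl_0(j))=\ep>0$ is consistent with $\om_{2j}=1$; for $\dl$ small enough the remaining $\om$-topological constraints are unaffected, so $\ztl\in\lo$.

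Next I would split $\Dl\ac=\ac(\ztl)-\ac(z)$ into kinetic and potential parts. A direct expansion gives
\[
\Dl\ac_K=\int_0^\dl\bigl[2\ep\,\eta'(t)\,\dot v_j(t)+\ep^2\eta'(t)^2\bigr]\,dt,
\]
where $v_j=\Im(w_j)$; using Sundman's estimate (Proposition \ref{prop: sudman}) which gives $\dot v_j=O(t^{-1/3})$, this contribution is $O(\ep)$ for $\dl$ fixed. The potential change is dominated by the pair $\{j,N-j\}$; completing the square yields
\[
\Dl\ac_U^{\mathrm{main}}=\int_0^\dl\Bigl[\frac{1}{2\sqrt{(\rho_j+\ep\eta\sin\tht_j)^2+\ep^2\eta^2\cos^2\tht_j}}-\frac{1}{2\rho_j}\Bigr]\,dt,
\]
while cross-pair interactions $\{j,k\}$ and $\{N-j,k\}$ with $k\notin\{j,N-j\}$ contribute smoothly $O(\ep)$. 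Substituting the Sundman asymptotic $\rho_j=C t^{2/3}(1+o(1))$ together with Proposition \ref{prop: angle limit} ($\tht_j\to\tht_j^+$) and splitting the integral at the transition scale $t_0\sim\ep^{3/2}$, one obtains $\Dl\ac_U^{\mathrm{main}}=-C_3\,\ep^{1/2}+o(\ep^{1/2})$, where $C_3$ is strictly positive precisely when $\sin\tht_j^+>-1$, i.e.\ when $\tht_j^+\in(-\pi/2,\pi/2]$. Since $\ep^{1/2}$ dominates $\ep$ as $\ep\to 0^+$, this yields $\Dl\ac<0$ for $\ep$ sufficiently small.

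The main obstacle is the asymptotic analysis in the transition region $t\sim\ep^{3/2}$, where neither the inner bound $|\ztl_j-\ztl_{N-j}|\gtrsim\ep$ nor the outer Taylor expansion $-\ep v_j/(2\rho_j^3)$ alone captures the leading behaviour; the two regimes must be combined. The direction-dependent piece enters through the cross term $2\rho_j\cdot\ep\eta\sin\tht_j$ inside the square root, and tracking it carefully shows that the coefficient of $\ep^{1/2}$ vanishes exactly at $\tht_j^+=-\pi/2$ and is strictly positive on $(-\pi/2,\pi/2]$. That borderline case is precisely what is excluded from the hypothesis and will be handled by a different deformation in a companion lemma.
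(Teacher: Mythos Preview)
Your deformation is pleasantly explicit and does keep the loop in $\lo$ (only $y$-components are changed, so the monotone constraints are untouched). But the decisive claim --- that the coefficient $C_3$ of $\ep^{1/2}$ is strictly positive for every $\tht_j^+\in(-\pi/2,\pi/2]$ --- is false, and the argument breaks precisely where it is most needed, namely for $\tht_j^+$ close to $-\pi/2$.

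After your rescaling $r=C_1 t^{2/3}/\ep$ one finds
\[
\Dl\ac_U^{\mathrm{main}}\;=\;\frac{3\,\ep^{1/2}}{4C_1^{3/2}}\;I\bigl(\sin\tht_j^+\bigr)+o(\ep^{1/2}),
\qquad
I(b):=\int_0^\infty\Bigl[\frac{1}{\sqrt{r^2+2br+1}}-\frac{1}{r}\Bigr]r^{1/2}\,dr,
\]
so that $C_3>0$ is equivalent to $I(\sin\tht_j^+)<0$. Now $\partial I/\partial b=-\int_0^\infty r^{3/2}(r^2+2br+1)^{-3/2}\,dr<0$, so $I$ is strictly decreasing; one checks $I(1)=-\pi$ and, writing $b=-1+\beta$, the near-singularity at $r=1-\beta$ contributes $\sim\log(1/\beta)$, hence $I(b)\to+\infty$ as $b\to-1^+$. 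Thus $I$ has a unique zero $b^*\in(-1,0)$, and for $\sin\tht_j^+\in(-1,b^*)$ your vertical shift \emph{increases} the action at leading order. Geometrically this is transparent: when $\tht_j^+$ is close to $-\pi/2$, the relative position $w_j(t)$ leaves the origin almost along $-i$, so the translated curve $w_j(t)+i\ep$ passes within distance of order $\ep\cos\tht_j^+$ of the origin, producing a large positive potential contribution that overwhelms the gain near $t=0$.

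The paper handles this range by a different mechanism. It invokes Proposition~\ref{bicoll}, a local deformation lemma (proved in \cite{Y16b}) based on Marchal's comparison between the Keplerian arcs and the parabolic collision--ejection orbit. That deformation is \emph{not} a rigid translation: it replaces the collision segment by a genuine two-body arc and is therefore effective for the entire open range $\tht_j^+\in(-\pi/2,\pi/2]$. The price is that the deformed path $\ze$ may move $x$-components, so $\ze$ need not satisfy the monotone constraints; the paper then applies a second, explicit modification (reflections across the line $x=0$ together with horizontal translations of the outer blocks) to produce $\ztl\in\lo$ with $\ac(\ztl)\le\ac(\ze)<\ac(z)$. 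Your shortcut of perturbing only in the imaginary direction avoids that repair step, but it cannot reach the angles $\tht_j^+$ just above $-\pi/2$.
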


To prove the above lemma, we need the following local deformation result near an isolated binary collision.

\begin{prop}
\label{bicoll}
 If $z_j(0)=z_{N-j}(0)$ and $\tht_j^+ \in (-\pi/2, \pi/2]$, then for $\ep>0$ and $t_0=t_0(\ep)>0$ small enough, there is an $\ze \in H^1([0, 1/2], \cc^{N})$ (a local deformation of $z$ near $t=0$) satisfying $\ac(\ze) < \ac(z)$ and the following:
 \begin{enumerate}
 \item[(a).] For any $l \in \N \setminus \{j, N-j\}$, $\ze_l(t) = z_l(t)$, $\forall t \in [0, 1/2]$, 
 \item[(b).] For any $k \in \{j, N-j\}$, 
 $$ \begin{cases}
 \ze_k(t) = z_k(t), \;\; & \text{ when } t \in [t_0, 1/2]; \\
 |\ze_k(t) - z_k(t)| \le \ep, \;\; & \text{ when } t \in [0, t_0], 
 \end{cases} $$
 \item[(c).] $\ze(t)$, $t \in (0, 1/2)$, is collision-free, 
 \item[(d).] $\ze_{k}(0) \ne \ze_l(0)$, for any $k \in \{j, N-j\}$ and $l \in \N \setminus \{j, N-j\}$, 
 \item[(e).] $\ze_j(0) \ne \ze_{N-j}(0)$, in particular 
 $$\ze_j(0)= z_j(0) +i\ep, \;\; \ze_{N-j}(0) = z_{N-j}(0) - i \ep.$$

 \end{enumerate}
\end{prop}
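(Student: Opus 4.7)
The plan is to exhibit $\ze$ explicitly as a short vertical perturbation of $z_j$ and $z_{N-j}$ supported on $[0, t_0]$, verify properties (a)--(e) directly, and estimate the action change using Sundman's asymptotics (Proposition \ref{prop: sudman}) together with the angle limit (Proposition \ref{prop: angle limit}). With cutoff $\phi_{t_0}(t) := \max(0, 1-t/t_0)$, set
\begin{equation*}
\ze_j(t) := z_j(t) + i\ep\phi_{t_0}(t), \quad \ze_{N-j}(t) := z_{N-j}(t) - i\ep\phi_{t_0}(t),
\end{equation*}
and $\ze_l := z_l$ for $l \notin \{j, N-j\}$. Properties (a), (b), (e) hold by construction. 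For (d), the isolated-collision hypothesis gives $z_l(0) \ne 0$ for $l \notin \{j, N-j\}$, so $\ze_k(0) = \pm i\ep$ stays away from the other bodies once $\ep$ is small. For (c), the strict monotone constraints (Lemmas \ref{lm: even strict}, \ref{lm: odd strict}) give $x_j(t) - x_{N-j}(t) = x_0(j+t) - x_0(j-t) > 0$ on $(0, 1/2]$, and since the perturbation is purely vertical, $\ze_j(t) \ne \ze_{N-j}(t)$ on $(0, t_0]$; collisions between $\ze_k$ and $\ze_l = z_l$ are excluded for small $\ep$ by continuity of $|z_j(t) - z_l(t)|$ away from $t = 0$.

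The core calculation is $\ac(\ze) < \ac(z)$. A direct computation gives
\begin{equation*}
\ac_K(\ze) - \ac_K(z) = -\frac{\ep}{t_0}\bigl(y_j(t_0) - y_{N-j}(t_0)\bigr) + \frac{\ep^2}{t_0},
\end{equation*}
and $y_j - y_{N-j} = 2\rho_j\sin\tht_j = O(t_0^{2/3})$ by Sundman, so the kinetic change is $O(\ep t_0^{-1/3}) + \ep^2/t_0$. For the potential, using the equal-mass identity $z_j - z_{N-j} = 2\rho_j e^{i\tht_j}$ yields the key decomposition
\begin{equation*}
|\ze_j - \ze_{N-j}|^2 = 4\rho_j^2\cos^2\tht_j + (2\rho_j\sin\tht_j + 2\ep\phi_{t_0})^2,
\end{equation*}
so that on the inner region $[0, T_*]$ with $T_* := (\ep/C_1)^{3/2}$ (at which $\rho_j \sim \ep$), the second summand dominates and $|\ze_j - \ze_{N-j}| \gtrsim \ep$. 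Integrating $|z_j - z_{N-j}|^{-1} - |\ze_j - \ze_{N-j}|^{-1}$ over $[0, T_*]$ yields a potential decrease of order $\ep^{1/2}$. The choice $t_0 := C\ep^{3/2}$ makes the kinetic cost $O(\ep^{1/2})$, and for $\ep$ small the inner gain dominates, delivering $\ac(\ze) - \ac(z) < 0$.

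The main obstacle is the subcase $\tht_j^+ \in (-\pi/2, 0)$ where $\sin\tht_j^+ < 0$: the cross term $8\ep\phi_{t_0}\rho_j\sin\tht_j$ is then negative, so on the outer region $[T_*, t_0]$ the perturbation can locally decrease the separation, and $|\ze_j - \ze_{N-j}|^{-1}$ may exceed $|z_j - z_{N-j}|^{-1}$ there. The sum-of-squares decomposition above nevertheless gives the uniform bound $|\ze_j - \ze_{N-j}| \ge 2\rho_j|\cos\tht_j|$, and the angle hypothesis $\tht_j^+ > -\pi/2$ forces $\cos\tht_j^+ > 0$ in this subcase, so the outer potential increase is bounded by a finite multiple of $\ep^{1/2}$ depending on $1/|\cos\tht_j^+|$. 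A careful tuning of the constant in $t_0 = C\ep^{3/2}$ (and, if needed, switching to a H\"older cutoff with exponent matched to Sundman's $1/3$) ensures the inner $\ep^{1/2}$-decrease strictly exceeds the outer $\ep^{1/2}$-increase plus the kinetic cost.
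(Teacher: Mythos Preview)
Your explicit linear perturbation works cleanly when $\tht_j^+\in[0,\pi/2]$: there the cross term $8\ep\phi_{t_0}\rho_j\sin\tht_j$ is nonnegative, so $|\ze_j-\ze_{N-j}|\ge|z_j-z_{N-j}|$ on all of $[0,t_0]$, the potential decreases everywhere, and the kinetic cross term has the favorable sign. The inner gain of order $\ep^{1/2}$ then beats $\ep^2/t_0$ once $t_0=C\ep^{3/2}$ with $C$ large.

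The gap is in the range $\tht_j^+\in(-\pi/2,0)$. There every contribution you list is of order $\ep^{1/2}$, and the constants cannot be tuned to make the balance negative in general. After rescaling $u=\rho_j/\ep$ (with $\phi_{t_0}\approx 1$), the $(j,N-j)$ potential change becomes
\[
\frac{3}{4C_1^{3/2}}\,\ep^{1/2}\int_0^{U_1}\Big(\frac{u^{1/2}}{\sqrt{u^2-2|\sin\tht_j^+|\,u+1}}-u^{-1/2}\Big)\,du,\qquad U_1\sim C_1C^{2/3}.
\]
The integrand is negative only for $u<\tfrac{1}{2|\sin\tht_j^+|}$; for larger $u$ it is positive, with a peak near $u=|\sin\tht_j^+|$ of height $\sim 1/|\cos\tht_j^+|$ and integrated contribution $\sim\log(1/|\cos\tht_j^+|)$. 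As $\tht_j^+\to-\pi/2$ the loss part diverges while the gain part stays bounded, so $\int_0^\infty\!g>0$ and the potential change is a net \emph{increase}. Choosing $U_1$ small enough to stay in the gain region forces $C$ bounded, but then the kinetic cross term $2C_1|\sin\tht_j^+|\,C^{-1/3}\ep^{1/2}$ is of the same fixed order as the (now bounded) gain. Your proposed fixes do not help: a H\"older cutoff $\phi=1-(t/t_0)^\alpha$ still produces a kinetic cross term $\sim\ep\,t_0^{-1/3}$ (the $t_0$-exponent is dictated by Sundman's $t^{-1/3}$ for $\dot\rho_j$, not by $\alpha$), and the bound $|\ze_j-\ze_{N-j}|\ge 2\rho_j|\cos\tht_j|$ gives an outer loss $\sim(\sec\tht_j^+-1)\,C^{1/3}\ep^{1/2}$, which \emph{grows} with $C$. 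So ``careful tuning of $C$'' cannot close the argument.

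The paper does not prove the proposition directly; it cites \cite[Proposition~4.3]{Y16b} and identifies the key input as a Kepler comparison of Marchal type: after blow-up the relative motion is a parabolic collision--ejection, and one replaces it on $[0,t_0]$ by the appropriate Keplerian arc joining the same endpoints in the same time. That comparison produces an $O(1)$ constant in the action drop that is uniform over $\tht_j^+\in(-\pi/2,\pi/2]$, which is precisely what your perturbative scheme lacks. This is also why the boundary case $\tht_j^+=-\pi/2$ genuinely requires the separate global argument of Lemma~\ref{thtp2}.
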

\begin{rem}
 A proof of the above proposition can be found in \cite[Proposition 4.3]{Y16b}. The proof essentially relies on the following fact of the Kepler problem: the parabolic collision-ejection solution connecting two different points with the same distance to the origin has action value strictly smaller than the direct and indirect Keplerian arcs joining them (with the same transfer time). This result was attributed to C. Marchal in \cite{C05}. A proof can be found in \cite{FGN11} and \cite{Y16a}. 
\end{rem}

\begin{proof} 

 [\textbf{Lemma \ref{thtp1}}] 
 By Proposition \ref{bicoll}, after a local deformation of $z$ near the isolated collision $z_j(0)=z_{N-j}(0)$, we get a new path $\ze \in H^1([0, 1/2], \cc^{N})$ with $\ac(\ze) < \ac(z).$ After apply the action of the dihedral group $D_N$ defined before, we get a loop, which will till be denoted by $\ze$. Notice that as a loop $\ze$ is contained in $\Lmd$ and satisfies the $\om$-topological constraints. However it is not so clear whether it is also contained in $\lo$, as the monotone constraints may be violated after the local deformation. Nevertheless we will show that by further modification of $\ze$, we can always get a $ \ztl \in \lo$ satisfying $\ac(\ztl) \le \ac(\ze) < \ac(z)$. 

 Recall that $z_j(0)=z_{N-j}(0)=0$. By Proposition \ref{bicoll} (e), $x^{\ep}_j(0) = x^{\ep}_{N-j}(0)=0$. Let $t_0$ be given as in Proposition \ref{bicoll}, we define $\dl_1, \dl_2$ as following
 $$ \dl_1 = - \min \{ x^{\ep}_j(t) : t \in [0, t_0] \}, \;\; \dl_2 = \max \{ x^{\ep}_{N-j}(t): t \in [0, t_0] \}. $$
 Then $\dl_1 \ge 0$, and so is $\dl_2$. Furthermore let 
 $$ t_1 = \min \{ t \in [0, t_0]: x_j^{\ep}(t) = -\dl_1 \}, \;\; t_2 = \min \{ t \in [0, t_0]: x_{N-j}^{\ep}(t) = \dl_2 \}$$
 and 
 $$ \mathbb{T}_1 =\{ t \in [0, t_1]: x_j^{\ep}(t) <0 \}, \;\; \mathbb{T}_2 = \{ t \in [0, t_2]: x_{N-j}^{\ep}(t) > 0 \}. $$ 
 
Now we define a new path $\ztl(t) = (\ztl_k(t))_{k \in \N}$ as following:   
$$  \ztl_j(t) =
\begin{cases}
 \ze_j(t) & \text{ if } t \in [0, t_1] \setminus \mb{T}_1, \\
 -\bar{\ze}_j(t) & \text{ if } t \in \mb{T}_1, \\
 \ze_j(t) + 2 \dl_1, & \text{ if } t \in [t_1, 1/2], 
\end{cases}
$$
$$ \ztl_{N-j}(t) = 
\begin{cases}
 \ze_{N-j}(t), & \text{ if } t \in [0,t_2] \setminus \mb{T}_2, \\
 - \bar{\ze}_{N-j}(t), & \text{ if } t \in \mb{T}_2, \\
 \ze_{N-j}(t) - 2 \dl_2, & \text{ if } t \in [t_2, 1/2], 
\end{cases} 
$$
$$ \ztl_k(t) = 
\begin{cases}
\ze_k(t) + 2 \dl_1, & \text{ if } k \in \{j+1, \dots, N-j -1 \}, \\
\ze_k(t) - 2 \dl_2, & \text{ if }  k \in \N \setminus \{j, \dots, N-j \},
\end{cases}
\;\; \forall t \in [0, 1/2].
$$

We point out that for any $k \in \{j, N-j\}$, if $\dl_k=0$, then $t_k=0$ and $\mb{T}_k=\emptyset$. In particular, when $\dl_1 =\dl_2=0$, then $\ztl = \ze$. See Figure \ref{fig: C4} for illuminating pictures of $\ztl_j$ (or $\ztl_{N-j}$), when $\dl_1 >0$ (or $\dl_2 >0$). 

For $\ztl$ defined as above, the monotone constraints are satisfied and $\ztl \in \lo$. Furthermore $\ac(\ztl) \le \ac(\ze) < \ac(z),$ as $\ac_K(\dot{\ztl})= \ac_K(\zd^{\ep})$, and $\ac_U(\ztl) \le \ac_U(z^{\ep})$, where the estimate on potential energy follows from
$$ |\ztl_k(t) - \ztl_l(t)| \ge |\ze_k(t) - \ze_l(t)|, \;\; \forall t \in [0, 1/2], \;\; \forall \{k < l \} \subset \N. $$

\begin{figure}
	\centering
	\includegraphics[scale=0.7]{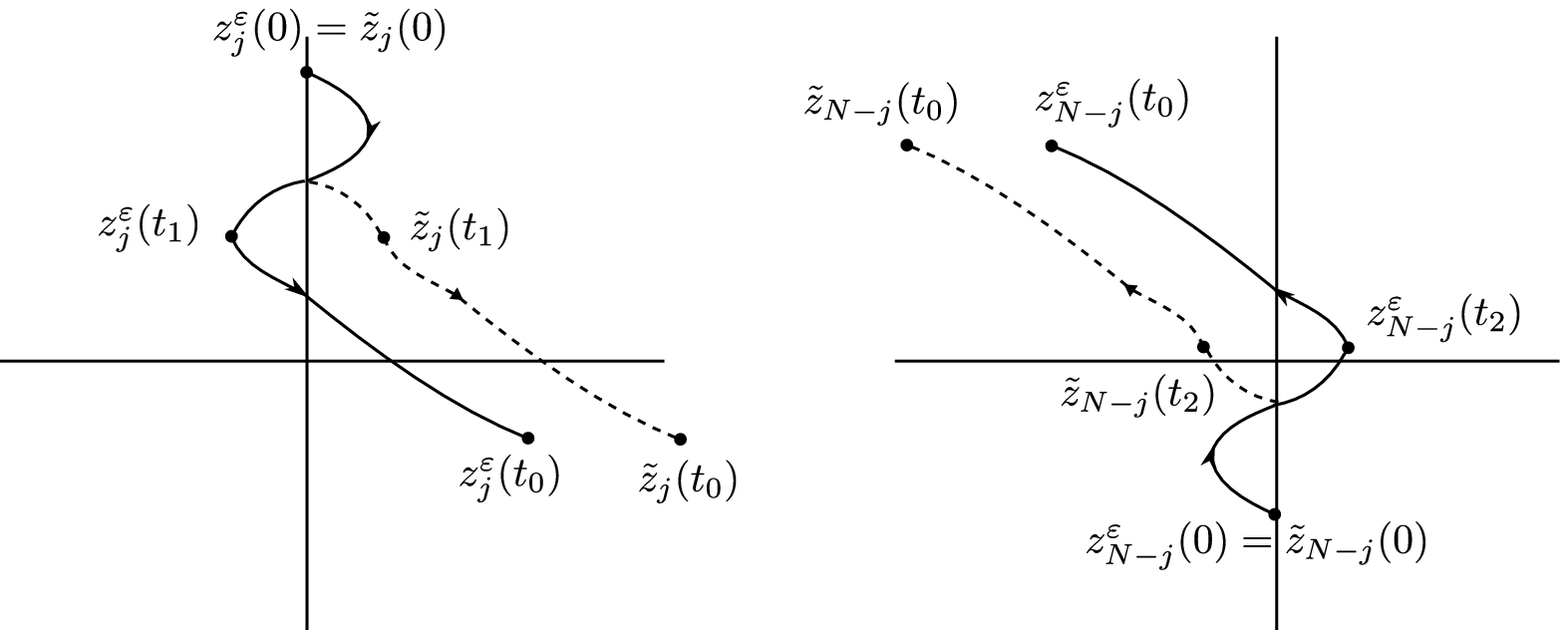}
	\caption{}
	\label{fig: C4}
\end{figure}

\end{proof}

We point out that Lemma \ref{thtp1} does not apply when $ \tht^+_j = -\pi/2$. By Gordon's result on Kepler problem (see \cite{Go77}), an argument in the nature of local deformation can never rule out the binary collision in this case. Some type of global estimate has to be involved, which generally is hard to do. The advantage of our approach is that the necessary global estimate can be obtained based on the monotone constraints. This will be given in the next lemma. 

\begin{lm}
 \label{thtp2} If $z_j(0)=z_{N-j}(0)$ and $\tht_j^+=-\pi/2$, then for $\ep_1>0$ small enough, there is a $\zey  \in \lo$ with $\ac(\zey) < \ac(z).$
\end{lm}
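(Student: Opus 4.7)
Gordon's theorem on the Kepler problem makes the local deformation of Lemma \ref{thtp1} (Proposition \ref{bicoll}) ineffective precisely when $\tht_j^+=-\pi/2$, so the plan is to use a genuinely global deformation. The key asset is the strict monotonicity of $\Re(z_0)$ on $[0,N/2]$ from Lemmas \ref{lm: even strict}--\ref{lm: odd strict}, which ensures that large-scale modifications do not create spurious collisions.

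First I would refine the collision profile: combining Proposition \ref{prop: sudman}, Proposition \ref{prop: angle limit}, the hypothesis $\tht_j^+=-\pi/2$, and the symmetry $z_0(-t)=\bar{z}_0(t)$ from \eqref{eq: symm 2}, one expects $y_0(j+s) = -C|s|^{2/3}(1+o(1))$ with $C>0$, so that $z_0$ is tangent to the real axis at $t=j$ from the lower half-plane, while $\dot{x}_0(j)$ remains finite.

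Next, for $\ep_1>0$ small I would construct $\zey$ by an imaginary lift. Fix a smooth even cutoff $\eta\colon\rr\to[0,1]$ with $\eta(0)=1$ and $\eta(s)=0$ for $|s|\ge 1$, and a scale $t_0=t_0(\ep_1)>0$. Define
\[
\zey_0(t) \;:=\; z_0(t) \;+\; i\bigl[\eta((t-j)/t_0) - \eta((t-(N-j))/t_0)\bigr]\bigl(\ep_1 + |y_0(t)|\bigr)
\]
on $[0,N]$, and $\zey_k(t):=\zey_0(k+t)$ for $k\in\N$. The antisymmetric pair of bumps is forced by $\zey_0(N-t)=\bar{\zey}_0(t)$; near $t=j$ the imaginary part is lifted by at least $\ep_1$ and near $t=N-j$ it is pushed down by at least $\ep_1$, so the binary collision is resolved. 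Since $\Re(\zey_0)=\Re(z_0)$, the monotone constraints \eqref{eq: monotone 1}--\eqref{eq: monotone 3} and condition \eqref{0} are automatic; $\Im(\zey_0(j))=\ep_1>0$ matches $\om_{2j}=1$, while the values $\Im(\zey_0(k/2))$ at the other half-integer points $k$ are unchanged for small $t_0$; strict monotonicity of $\Re(z_0)$ rules out new collisions for $\ep_1$ small.

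The hard part is the action inequality $\ac(\zey)<\ac(z)$. The potential gain from removing the $|t-j|^{-2/3}$-singularity in the $(j,N-j)$-pair scales like $\Delta\ac_U\sim-t_0^{1/3}$, while the bump's kinetic cost, given $\dot{y}_0\sim|t-j|^{-1/3}$, is $\Delta\ac_K\sim\ep_1^2/t_0$ plus cross terms of order $\ep_1 t_0^{2/3}$. At the natural Sundman scale $t_0\sim\ep_1^{3/2}$ the leading $\ep_1^{1/2}$-terms cancel -- this is the Gordon tie -- and the argument must extract a strictly negative sub-leading contribution. This should come from the genuinely $N$-body structure: the $O(1)$ interactions of $m_j,m_{N-j}$ with the neighboring masses $m_{j\pm 1},m_{N-j\pm 1}$, which by the monotone chain sit at definite horizontal distances away, perturb the pure Kepler motion at one order finer than Marchal's averaging. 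A careful expansion in the spirit of \cite[Proposition 4.3]{Y16b}, tracking this next-order correction, should produce the required strict negative sign.
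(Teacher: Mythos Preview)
Your deformation is vertical (imaginary) near the collision, and you already identify the fatal symptom: at the Sundman scale the kinetic cost and potential gain cancel to leading order, which is exactly Gordon's obstruction. Your proposed cure---extracting a strictly negative sub-leading term from the interaction of $m_j,m_{N-j}$ with their neighbours---is not an argument but a hope; those corrections are $O(1)$ perturbations of a two-body blow-up and there is no reason a priori for their sign to be favourable. So as written the proof has a genuine gap at the decisive step.

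The point you are missing is that the hypothesis $\tht_j^+=-\pi/2$ is not just an obstruction, it is information: the collision is approached \emph{vertically}, so the \emph{horizontal} velocities $\dot x_j,\dot x_{N-j}$ are small near $t=0$ (of order $t^{2/3}$ by Sundman, once one checks via the monotone constraints that the horizontal velocity of the pair's centre of mass vanishes at $t=0$). Consequently a \emph{horizontal} deformation is cheap in kinetic energy. The paper exploits this: it shifts $z_j$ to the right and $z_{N-j}$ to the left by an amount that grows from $0$ to $\ep_1^2$ on $[0,\ep_1]$ and stays at $\ep_1^2$ on $[\ep_1,1/2]$ (with matching rigid shifts of the remaining bodies to preserve the monotone order). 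The kinetic cost is then $O(\ep_1^{8/3})$. The potential gain is harvested \emph{globally}, away from the collision: by strict monotonicity $x_j(1/2)-x_{N-j}(1/2)>0$, so on a fixed interval near $t=1/2$ the $\ep_1^2$ horizontal separation increase lowers $|z_j-z_{N-j}|^{-1}$ by a definite multiple of $\ep_1^2$. Since $\ep_1^2$ beats $\ep_1^{8/3}$, the action drops. In short: deform in the real direction, not the imaginary one, and collect the gain far from the collision rather than trying to beat Gordon locally.
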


\begin{proof}
	For $\ep_1>0$ small enough, define a new loop $\zey \in \lo$ as following:
	$$ \zey_j(t) = \begin{cases}
	z_j(t) + t(2\ep_1 -t), \;\; & \forall t \in [0, \ep_1], \\
	z_j(t) + \ep_1^2, \;\; & \forall t \in [\ep_1, 1/2], 
	\end{cases} $$
	$$ \zey_{N-j}= \begin{cases}
	z_{N-j}(t)-t(2\ep_1 -t), \;\; & \forall t \in [0, \ep_1], \\
	z_{N-j}(t)-\ep_1^2, \;\; & \forall t \in [\ep_1, 1/2], 
	\end{cases} $$
	$$ \zey_k(t) = \begin{cases}
	z_k(t) + \ep_1^2, \;\; & \text{ if } k \in \{j+1, \dots, N-j-1 \}, \\
	z_k(t) - \ep_1^2, \;\; & \text{ if } k \in \N \setminus \{j, \dots, N-j\},
	\end{cases} \;\; \forall t \in [0, 1/2]. $$

	We claim $\ac(\zey) < \ac(z)$ for $\ep_1$ small enough. The above definition immediately implies
	$$ \ac_K(\zd^{\ep_1}) > \ac_K(\zd), \;\; \ac_U(\zd^{\ep_1}) < \ac_U(z). $$
	However to get the desired result, the above estimates have to be improved. 

	For $\ac_K$, we need the estimates of $\dot{x}_j(t)$ and $\dot{x}_{N-j}(t)$, when $t$ goes to zero. Let $\zh$ and $w_k$, $k \in \{j, N-j\}$, be defined as in \eqref{zhat} and \eqref{wcom}. Then $\dot{x}_k(t) = \dot{u}_k(t) + \dot{\hat{x}}(t)$. Meanwhile by \eqref{wpolar}, 
  		$$ \dot{u}_k = \dot{\rho}_k \cos \tht_k - \rho_k \dot{\tht}_k \sin \tht_k, \;\; \forall k \in \{j, N-j\}.$$
    As $ \tht_j^+= \lim_{t \to 0^+}\tht_j(t)=-\pi/2$, by Proposition \ref{prop: sudman} and \ref{prop: angle limit}, we get 
     \begin{equation}
     \label{uj} |\dot{u}_j(t) | \le C_1 t^{\frac{2}{3}} \text{ for } t>0 \text{ small enough. }
     \end{equation}
    Since $\tht_{N-j}^+= \tht_j^+ \pi = \pi/2$, similarly we have
    \begin{equation}
    \label{unj} |\dot{u}_{N-j}(t)| \le C_2 t^{\frac{2}{3}} \text{ for } t>0 \text{ small enough. }
    \end{equation}
        
    Notice that although $m_j$ and $m_{N-j}$ collide at $t=0$, there center of mass $\zh(t)$ is $C^2$ at $t=0$, and we claim
    \begin{equation*}
      \dot{\xh}(0) = 0.
    \end{equation*} 
    Otherwise let's assume $\dot{\xh}(0) <0.$ Then by \eqref{uj},
     $$ \dot{x}_j(t) = \dot{u}_j(t) + \dot{\hat{x}}(t)  < 0, \text{ for } t >0 \text{ small enough}.$$ 
    However this violates the monotone constraints. Similarly if $\dot{\xh}(0) > 0$, then 
    $$ \dot{x}_{N-j}(t)= \dot{u}_{N-j}(t) + \dot{\hat{x}}(t) >0, \text{ for } t> 0 \text{ small enough},$$
    which is again a contradiction to the monotone constraints. This proves our claim. As a result,
    \begin{equation}
    \label{xsd} |\dot{\xh}(t)| \le C_3 t \text{ for } t>0 \text{ small enough. }
    \end{equation} 
    Combine this with \eqref{uj} and \eqref{unj}, we get
  \begin{equation}
   \label{hori vel} 
   |\dot{x}_k(t) | \le C_4 t^{\se} \text{ for } t>0 \text{ small enough,} \;\; \forall k \in \{j, N-j\}. 
  \end{equation}
   
   By the definition of $\zey$ and \eqref{hori vel}, 
   \begin{equation}
   \label{eq: K difference} \begin{split}
   \ac_K(\zey) - \ac_K(z) &= \ey \int_0^{\ep_1} \sum_{k \in \{j, N-j\}} \big( |\zd_k^{\ep_1}| - |\zd_k|^2 \big) \,dt \\
   						  &= \int_0^{\ep_1} 4(\ep_1 -t)^2 +2 (\ep_1-t)\dot{x}_j(t) - 2 (\ep_1-t)\dot{x}_{N-j}(t) \,dt \\
   						  & \le 4\int_0^{\ep_1} (\ep_1 -t)^2 + C_4 t^{\frac{2}{3}}(\ep_1-t) \,dt \le C_5 \ep_1^{\frac{8}{3}} + o( \ep_1^{\frac{8}{3}}). 
   \end{split}	
   \end{equation}

   Now we will estimate the change in potential energy. By Lemma \ref{lm: even strict} and \ref{lm: odd strict}, $x_j(1/2)-x_{N-j}(1/2) >0$. Then for any $\dl>0$ small enough, there exist positive constants $C_6, C_7$ (both independent of $\ep_1$), such that 
   \begin{equation}
   \label{eq: xj xN-j} x_j(t) -x_{N-j}(t) \ge C_6, \;\; |z_j(t)-z_{N-j}(t)|^{-1} \ge C_7, \;\; \forall t \in [1/2-\dl, 1/2].
   \end{equation}
   Meanwhile for any $t \in [1/2-\dl, 1/2]$, 
   $$ |\zey_j(t)-\zey_{N-j}(t)|^{-1} = \big(|z_j(t)-z_{N-j}(t)|^2 +4\ep_1^2(x_j(t)-x_{N-j}(t))+4\ep_1^4 \big)^{-\ey}.$$
   By \eqref{eq: xj xN-j} and a simple computation, we get, for any $t \in [1/2-\dl, 1/2]$, 
   \begin{equation*}
   \frac{1}{|\zey_j(t)-\zey_{N-j}(t)|} - \frac{1}{|z_j(t)-z_{N-j}(t)|}  \le \frac{-2(x_j(t)-x_{N-j}(t))}{|z_j(t)-z_{N-j}(t)|^3}\ep_1^2 +o(\ep^2). 
   \end{equation*}
   Notice the definition of $\zey$ implies 
   \begin{equation}
   |\zey_k(t) - \zey_l(t)| \ge |z_k(t) - z_l(t)|, \;\; \forall t \in [0, 1/2], \;\; \forall \{k \ne l \} \subset \N. 
   \end{equation}
   As a result 
   $$ \ac_U(\zey)- \ac_U(z) \le \int_{\ey-\dl}^{\ey} \frac{1}{|\zey_j-\zey_{N-j}|}-\frac{1}{|z_j-z_{N-j}|} \,dt \le -C_8 \dl \ep_1^2. $$
   Combining this with our estimate on $\ac_K$ obtained earlier, we get 
   $$ \ac(\zey) - \ac(z) \le -C_8\dl \ep^2 + C_5 \ep_1^{\frac{8}{3}} < 0 $$
   for $\ep_1$ small enough, as $C_5, C_8$ are independent of $\ep_1$.

\end{proof}

By Lemma \ref{thtp1} and Lemma \ref{thtp2}, we have proven there is no collision between $m_j$ and $m_{N-j}$ at $t=0$, for any $j \in \{1, \dots, \mf{n} \}$. Similarly it can be proven that there is no collision between $m_j$ and $m_{N-1-j}$ at $t=1/2$, for any $j \in \{0, \dots, \mf{n} \}$. We will not repeat it here. As a result, we have proven the following. 
\begin{prop}
For any $\om \in \Om_N$, the action functional $\ac$ has at least one minimizer $\om \in \lo$. Furthermore every action minimizer $\zo$ satisfies the strictly monotone constraints, and is a collision-free solution of \eqref{N body}.\end{prop}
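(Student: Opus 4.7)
The plan is to collect the results already established in Section \ref{sec proof}. Existence of an action minimizer $\zo \in \lo$ is immediate from Proposition \ref{lm 1}, and the strictly monotone property for any such minimizer follows by combining Lemmas \ref{nondecreasing}, \ref{degenerate}, \ref{lm: even strict}, and \ref{lm: odd strict}. It then remains to show that $\zo$ is collision-free, after which being a solution of \eqref{N body} will follow from Palais's symmetric critical principle.

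First I would exclude collisions of $\zo$ on the open interval $(0, N/2)$: by the symmetric identities \eqref{chore eq}, \eqref{eq: symm 1}, and \eqref{eq: z0 zj zN-j}, any collision $z_j^\om(t) = z_k^\om(t)$ with $j \ne k$ and $t$ in the interior would force two distinct times to share the same value of $x_0^\om$, contradicting strict monotonicity. The same identities together with \eqref{eq: symm 3} reduce the possible endpoint collisions to binary ones of the form $z_j^\om(0) = z_{N-j}^\om(0)$ for $j \in \{1, \dots, \nf\}$, or $z_j^\om(1/2) = z_{N-1-j}^\om(1/2)$ for $j \in \{0, \dots, \nf\}$ (triple-or-higher collisions being ruled out because each pair involved would have to collide separately at the same instant, yet the symmetric pairings are disjoint).

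To exclude each such binary collision, I would invoke the Sundman-type asymptotics of Proposition \ref{prop: sudman} together with the angular-limit analysis of Proposition \ref{prop: angle limit} to confine the ejection angle $\tht_j^+$ to $[-\pi/2, \pi/2]$, using the strict monotone constraints to normalize the half-planes. Lemma \ref{thtp1} then handles the generic range $\tht_j^+ \in (-\pi/2, \pi/2]$ by the Marchal-type local deformation, followed by the flattening construction that keeps the competitor inside $\lo$. The critical endpoint $\tht_j^+ = -\pi/2$, where Gordon's theorem obstructs any purely local deformation, is dispatched by Lemma \ref{thtp2}, whose global deformation exploits the uniform positive lower bound on $x_j^\om - x_{N-j}^\om$ near $t = 1/2$ supplied by the strictly monotone constraints. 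The parallel binary collisions at $t = 1/2$ are excluded by the exact analogue of this argument obtained by reflecting the time variable about $1/4$, noting that when $N = 2n+1$ the pair $\{\nf, N-1-\nf\}$ is degenerate and needs no treatment.

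Once $\zo \in \hlo$ has been secured, the final step is to observe that every defining inequality of $\lo$ is strict on $\zo$, so $\zo$ lies in the $H^1$-interior of $\lo$ inside $\hat{\Lmd}^{D_N}_N$. Consequently $\zo$ is an unconstrained critical point of $\ac$ in $\hat{\Lmd}^{D_N}_N$, and Palais's symmetric critical principle promotes it to a critical point on $\hat{\Lmd}_N$, i.e.\ a classical solution of \eqref{N body}. The main obstacle throughout is the critical-angle case $\tht_j^+ = -\pi/2$: this is precisely where the monotone constraints become indispensable, because they supply the horizontal separation that converts the seemingly marginal $\ep_1^2$ competitor of Lemma \ref{thtp2} into a genuine strict decrease of $\ac$.
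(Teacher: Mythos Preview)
Your proposal is correct and mirrors the paper's approach exactly: the proposition in the paper is stated without a separate proof environment because it is a summary of the preceding results, and the surrounding discussion (Proposition \ref{lm 1}, Lemmas \ref{lm: even strict} and \ref{lm: odd strict} for strict monotonicity, the reduction to binary endpoint collisions, and Lemmas \ref{thtp1} and \ref{thtp2} for their exclusion) is precisely what you outline. Your final paragraph, noting that strict monotonicity plus collision-freeness forces $\zo$ into the interior of the constraint set so that Palais's principle applies, is exactly the implicit reasoning behind the paper's sentence ``As a result, if $z$ is collision-free, it must be a solution of \eqref{N body}.''
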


However we still haven't proven \eqref{eq: dot x > 0} in Theorem \ref{thm 1}. This will be demonstrated by the next lemma. Notice this is not necessarily true, even when the strictly monotone constraints are satisfied. 
 \begin{lm}
 	\label{strictly increasing} For any $\om \in \Om_N$, if $\zo \in \lo$ is a minimizer of $\ac$ in $\lo$, then it satisfies \eqref{eq: dot x > 0}.
 \end{lm}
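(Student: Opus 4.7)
The equalities $\dot x_0^{\om}(0) = \dot x_0^{\om}(N/2) = 0$ are immediate consequences of the $D_N$-symmetry already built into every element of $\lo$. Writing $z_0^{\om}(t) = x_0^{\om}(t) + i y_0^{\om}(t)$, equation \eqref{eq: symm 2} gives $x_0^{\om}(-t) = x_0^{\om}(t)$, so $x_0^{\om}$ is even about $t = 0$ and its derivative there vanishes. Combining \eqref{chore eq} (with $j = N-1$) with \eqref{eq: symm 1} (with $j = 0$) yields $z_0^{\om}(t) = \overline{z_0^{\om}(N - t)}$, which similarly makes $x_0^{\om}$ even about $t = N/2$ and gives $\dot x_0^{\om}(N/2) = 0$.

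For the strict positivity on the open interval, I start from the fact established in the proposition immediately preceding this lemma: $\zo$ is a collision-free classical solution of \eqref{N body}, hence $\dot x_0^{\om}$ is real-analytic on $\rr$. By Lemmas \ref{lm: even strict} and \ref{lm: odd strict}, $\dot x_0^{\om} \ge 0$ on $[0, N/2]$, so any zero of $\dot x_0^{\om}$ in the open interval must be isolated and of even order. Arguing by contradiction, suppose $\dot x_0^{\om}(t_*) = 0$ at some $t_* \in (0, N/2)$. Even-order vanishing forces $\ddot x_0^{\om}(t_*) = 0$, so by Newton's equation the horizontal component of the total force on $m_0$ vanishes at $t_*$, and $x_0^{\om}$ is nearly constant on $[t_* - \delta, t_* + \delta]$ in the strong sense that $x_0^{\om}(t_* + \delta) - x_0^{\om}(t_* - \delta) = o(\delta)$.

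To produce a contradiction, I would construct a competitor $\ze \in \lo$ with $\ac(\ze) < \ac(\zo)$, in the spirit of the perturbations used in the proofs of Lemmas \ref{nondecreasing} and \ref{lm: even strict}. Concretely, shift $\zo$ horizontally by $-\ep$ on $[0, t_* - \delta]$, propagate this through the rest of the period via the $D_N$-action (shifting the outer block of masses and leaving the inner block fixed, exactly as in Lemma \ref{nondecreasing} Case~1), leave $\zo$ unchanged on $[t_* + \delta, N/2]$, and interpolate linearly on $[t_* - \delta, t_* + \delta]$. The $\om$-topological constraints are preserved since the shift does not affect $\Im(z_0^{\om})$ at half-integer times, and the monotone constraint is preserved because the ramp only adds a positive constant $\ep/(2\delta)$ to $\dot x_0^{\om}$. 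The potential energy then strictly decreases by an amount of order $\ep$, since the strict monotone constraints from Lemmas \ref{lm: even strict} and \ref{lm: odd strict} guarantee that the shifted and unshifted blocks of masses are pushed further apart in the $x$-direction on the complementary interval. The kinetic cost decomposes into a transition term of order $\ep^2/\delta$ and a cross term proportional to $\ep/\delta \cdot (x_0^{\om}(t_* + \delta) - x_0^{\om}(t_* - \delta)) = o(\ep)$.

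The main technical obstacle is exactly this cross term: because $\zo$ already satisfies \eqref{N body}, the first-order variation of $\ac$ in any smooth direction vanishes, and a naive shift-plus-ramp perturbation would have its $O(\ep)$ potential gain cancelled precisely by an $O(\ep)$ kinetic cost coming from $\int \dot x_0^{\om}\,dt$ over the transition. The hypothesis $\dot x_0^{\om}(t_*) = 0$ is what kills this dangerous cross term, reducing it to $o(\ep)$. Balancing $\ep \ll \delta$ then yields $\ac(\ze) < \ac(\zo)$, contradicting the minimality of $\zo$ in $\lo$ and ruling out any interior zero of $\dot x_0^{\om}$, which completes the proof of \eqref{eq: dot x > 0}.
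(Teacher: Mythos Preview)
Your proof is correct and follows essentially the same strategy as the paper. Both arguments observe that the endpoint equalities $\dot x_0^{\om}(0)=\dot x_0^{\om}(N/2)=0$ come from the $D_N$-symmetry together with smoothness of the collision-free solution, and both rule out an interior zero $\dot x_0^{\om}(t_*)=0$ by building a shift-plus-ramp competitor in $\lo$ whose $O(\ep)$ potential drop (from pushing the inner and outer blocks further apart, via the strict monotone constraints) dominates the kinetic cost once the dangerous cross term $\int \dot x_0^{\om}\,dt$ over the ramp is killed by the hypothesis $\dot x_0^{\om}(t_*)=0$.

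The presentational differences are minor. The paper works entirely on the fundamental domain $[0,1/2]$, translates the putative zero of $\dot x_0^{\om}$ to a zero of some $\dot x_j$ at $t_0\in\{0,\tfrac12\}\cup(0,\tfrac12)$, and re-uses the explicit quadratic-ramp competitor from Lemma~\ref{thtp2} (with a single parameter $\ep_1$, shift $\ep_1^2$, ramp length $\ep_1$), obtaining potential change $\le -C\ep_1^2$ and kinetic change $\le C'\ep_1^3$. You instead work directly with $z_0^{\om}$ on $[0,N/2]$, use a linear ramp with two independent parameters $\ep,\delta$, and balance $\ep\ll\delta$ at the end. Your version is more streamlined in that it avoids the three-case split, but correspondingly it leaves implicit exactly which $z_j$'s get shifted on the fundamental domain and why the resulting loop lies in $\lo$ (in particular, respects the endpoint identities \eqref{eq: symm 3}); the paper's case analysis makes this explicit. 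Your appeal to real-analyticity and ``even-order vanishing'' is harmless but unnecessary: the argument only needs $\dot x_0^{\om}(t_*)=0$, not isolatedness or $\ddot x_0^{\om}(t_*)=0$.
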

 
 \begin{proof}
       For simplicity, let $z =\zo$. We give the details for $N=2n$ (for $N=2n+1$, it can be proven similarly). By \eqref{eq: z0 zj zN-j}, it is equivalent to prove the following: 
       \begin{align}
       \label{eq: x dot >0 1} & \dot{x}_0(0)= \dot{x}_{\mf{n}+1}(0)=0,   \\
       \label{eq: x dot >0 2} & \begin{cases}
       \xd_0(t) >0, \;\; & \forall t \in (0, 1/2], \\
       \xd_j(t) >0, \;\; & \forall t \in [0, 1/2], \; \forall j \in \{1, \dots, \mf{n} \}, 
       \end{cases}   \\
       \label{eq: x dot >0 3} & \begin{cases}
       \xd_{\mf{n}+1}(t)<0, \;\; & \forall t \in (0, 1/2], \\
       \xd_{j}(t) < 0, \;\; & \forall t \in [0, 1/2], \; \forall j \in \{\mf{n}+2, \dots, N-1 \}. 
       \end{cases}           
       \end{align}
       Since $z$ is a collision-free minimizer of $\ac$ in $\lo$, $z_0(t)$ and $z_{\mf{n}+1}(t)$ must be perpendicular to the real axis at $t =0$, and \eqref{eq: x dot >0 1} follows immediately.

       Now let's prove \eqref{eq: x dot >0 2} (the proof of \eqref{eq: x dot >0 3} is similarly). By a contradiction argument, assume it does not hold. Then by the strictly monotone constraints, $\xd_j(t_0)=0$, for some $t_0 \in [0, 1/2]$ and $j \in \{0, \dots, \mf{n}\}$ (if $t_0=0$, $j \ne 0$). Depending on the value of $t_0$, three different cases will be considered. 

       \emph{Case 1}: $t_0=0$. For $\ep_1>0$ small enough, let $\zey \in \lo$ be the same path defined in the proof of Lemma \ref{thtp2}. Then
       \begin{equation}
        \label{eq: ac U differe x dot}  \ac_U(\zey) - \ac_U(z) \le -C_1\ep_1^2. 
        \end{equation}
        Meanwhile as $\xd_j(0)=\xd_{N-j}(0)=0$ (by the symmetric constraints, $\xd_{N-j}(0)= -\xd_{j}(0)$), we have 
        $$ |\xd_k(t)| \le C_2 t \text{ for } t >0 \text{ small enough}, \;\; \forall k \in \{j, N-j\}. $$ 
        This in fact is a better estimate than \eqref{hori vel}. Then by a computation similar to \eqref{eq: K difference}, we get 
        \begin{equation}
        \ac_K(\zey) - \ac_K(z) \le C_3 \ep_1^3.
        \end{equation}
        As a result, $\ac(\zey)-\ac(z)<0$ for $\ep_1>0$ small enough, which is a contradiction. This finishes our proof of \emph{Case 1}. 

        For the remaining two cases, similar estimates as above will give us contradictions as well. We just give the definition of $\zey \in \lo$ and omit the details. 

        \emph{Case 2}: $t_0=1/2$. Let $\zey \in \lo$ be defined as following: 
        $$ \zey_j(t) = \begin{cases}
        \zey_j(t) - \ep_1^2, \;\; & \forall t \in [0, 1/2-\ep_1], \\
        \zey_j(t) - (1/2-t)(2\ep_1-(1/2-t)), \;\; & \forall t \in [1/2-\ep_1, 1/2], 
        \end{cases} $$
        $$ \zey_{N-j-1}(t) = \begin{cases}
        \zey_{N-j-1}(t) +\ep_1^2, \;\; & \forall t \in [0, 1/2-\ep_1], \\
        \zey_{N-j-1}(t) +(1/2-t)(2\ep_1-(1/2-t)), \;\; & \forall t \in [1/2-\ep_1, 1/2], 
        \end{cases} $$
        $$ \zey_k(t) = \begin{cases}
        \zey_k(t)+\ep_1^2, \; &\text{ if } k \in \{j+1, \dots, N-j-2\}, \\
        \zey_k(t)-\ep_1^2, \; &\text{ if } k \in \N \setminus \{j, \dots, N-j-1\}, 
        \end{cases} \;\; \forall t \in [0, 1/2]. $$

        \emph{Case 3}: $t_0 \in (0, 1/2)$. Let $\zey \in \lo$ be defined as following:
        $$ \zey_j(t) = \begin{cases}
        \zey_j(t) - \ep_1^2, \;\; & \forall t \in [0, t_0-\ep_1], \\
        \zey_j(t) + (t-t_0)(2 \ep_1 - |t- t_0|), \;\; & \forall t \in [t_0-\ep_1, t_0+\ep_1], \\
        \zey_j(t) + \ep_1^2, \;\; & \forall  t \in [t_0+\ep_1, 1/2],  
        \end{cases} $$
        $$ \zey_k(t) = \begin{cases}
        \zey_k(t) + \ep_1^2, \; & \text{ if } k \in \{j+1, \dots, N-j-1 \}, \\
        \zey_k(t) - \ep_1^2, \; & \text{ if } k \in \N \setminus \{j, \dots, N-j-1 \},
        \end{cases} \;\; \forall t \in [0, 1/2]. $$      
       
 \end{proof}
 
 We finish this section with a remark about the existence of these simple choreographies in general homogeneous potentials: 
  $$ U_{\al}(z) = \sum_{\{j < k\} \subset \N} \frac{1}{|z_j-z_k|^{\al}}, \;\; \al >0. $$
 Such a potential is called a \emph{strong force}, if $\al \ge 2$; a \emph{weak force}, if $0 < \al <2.$ The Newtonian potential corresponds to $\al=1$. 

 All the results we proved in this section hold for $U_{\al}$ with $\al \ge 1$, but not when $0 < \al <1$. The reason is that when $\al <1$, Proposition \ref{bicoll} only holds for $\tht_j^+ \in (-\pi/2+\beta(\al), \pi/2]$, for some $\beta(\al)>0$.

\section{Simple choreographies with additional symmetries} \label{sec: addsym}

As we mentioned all the simple choreographies that were proven in Section \ref{sec proof} belong to the family of linear chain, and so are the Figure-Eight solution and the Super-Eight solution. We would like to compare our results with those two. First let's we recall what we know about those two solutions. There are several different figure eight type simple choreographies depending on the symmetric constraints, see \cite{C02} and \cite{FT04}. Here by the Figure-Eight solution, we only mean the one proved by Chenciner and Montgomery in \cite{CM00}. 

\begin{ex} \label{ex1}
      The Figure-Eight solution, $z^e$, is a collision-free minimizer of the action functional $\ac$ in $\Lmd_3^{D_6}$ with the action of the dihedral group $D_6$ defined as
      $$ \uptau(g)t = t-1/2, \quad \rho(g) q = -\bar{q}, \quad \sg(g) = (0,1,2)^2,$$
      $$ \uptau(h)t= -t+1, \quad \rho(h)q = \bar{q}, \quad \sg(h) = (0,2). $$ 
      The corresponding loop $z^e_0 \in H^1(\rr /3\zz, \cc)$ is in the shape of a figure eight symmetric with respect to the origin, the real and imaginary axes. Furthermore it was proved in \cite{KZ03} and \cite{FM05} that each lobe of the eight is convex.
\end{ex}

\begin{ex} \label{ex2}
    The Super-Eight solution, $z^s$, belongs to $\Lmd_4^{D_4 \times \zz_2}$, where $D_4$ is the dihedral group with the action defined as in the section \ref{sec intro},  and $\zz_2 = \langle f|f^2=1 \rangle$ with the action defined as following
	$$ \uptau(f)t=t, \quad \rho(f)q = -q, \quad \sg(f) = (0,2)(1,3).$$
    However the action minimizer of $\ac$ in $\Lmd_4^{D_4 \times \zz_2}$ is not $z^s$, but the rotating $4$-gon.

    Instead $z^s$ is an action minimizer among all $z \in \Lmd_4^{D_4 \times \zz_2}$ satisfying the following topological constraints: 
    	$$ y_0(j/2) = \om_j|y_2(j/2)|, \quad \forall j \in \{1,2,3\},$$
	where $\om = (\om_1, \om_2, \om_3)=(1, -1, 1).$
	
	This was proved analytically by Shibayama in \cite{Sh14} and  he also confirmed numerically that it should look like a super eight.	
\end{ex}

Meanwhile for any $\om \in \Om_N$, let $z^{\om} \in \Lmd_N^{D_N}$ be a simple choreography obtained by Theorem \ref{thm 1}.

\begin{ex} \label{ex3}
 When $N=3$ and $\om = (\om_1, \om_2) = (-1,1)$, an illuminating picture of $z^{\om}$ can be found in Figure \ref{C7}(a), where the solid curves represent the motion of the masses between $t=0$ and $t=1/2$. Such a $z^{\om}$ also looks like a figure eight. The symmetric constraints only implies the loop $z_0$ is symmetric with respect to the real axis. However unlike the Figure-Eight solution, it is not clear whether it is also symmetric with respect to the imaginary axis and the origin.  
\end{ex}

\begin{ex} \label{ex4}
 When $N=4$ and $\om = (\om_1, \om_2, \om_3)=(1, -1, 1)$, an illuminating picture of $z^{\om}$ can be found in Figure \ref{C7}(b), where the solid curves represent the motion of the masses between $t=0$ and $t=1/2$. Such a $z^{\om}$ looks like a super eight symmetric with respect to the real axis, but unlike the Super-Eight solution, we don't if it is also symmetric with respect to the imaginary axis and the origin.  
\end{ex}

\begin{figure}
	\centering
	\includegraphics[scale=0.65]{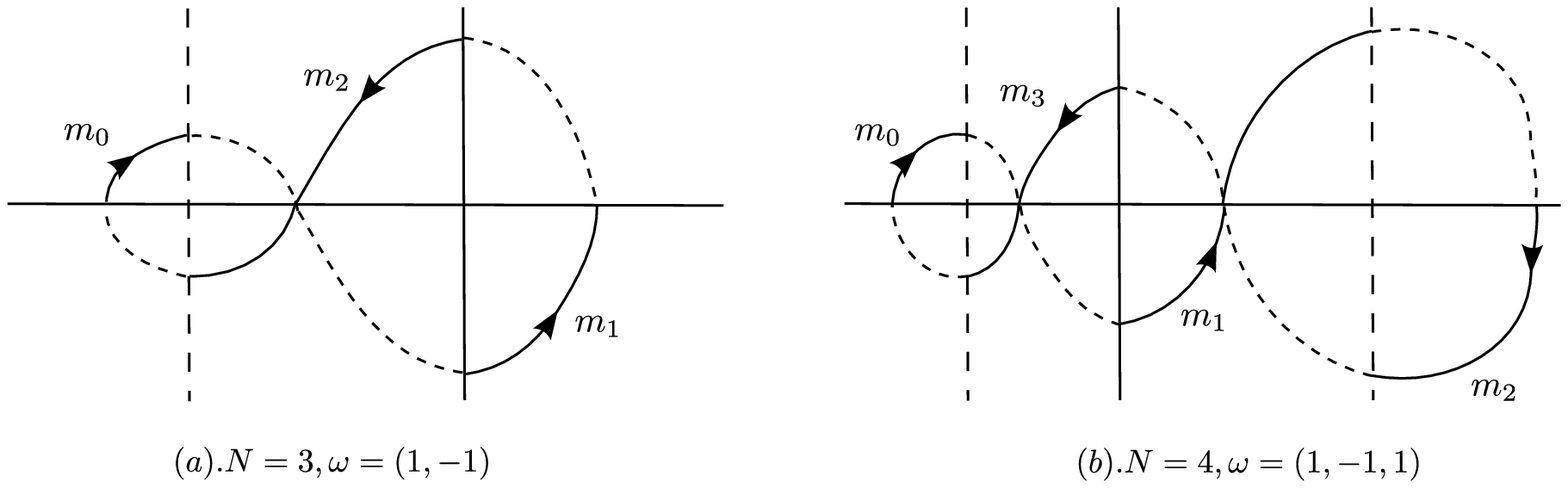}
	\caption{}
	\label{C7}
\end{figure}

The similarities and differences between the above examples inspire us to introduce the symmetric groups $ H_N := D_N \times \zz_2$, where $D_N$ and $\zz_2$ are the same as above, but the action will be defined as following:

if $N=2n$, 
\begin{align}
 \label{even} \begin{split}
              \uptau((g,1))t & = t-1/2, \quad \rho((g,1)) q = \bar{q}, \quad \sg((g,1))=(0, \dots, 2n-1), \\
              \uptau((h,1))t &= 1-t, \quad \rho((h,1)) q = \bar{q}, \quad \sg((h,1)) = (0, 2n-1)\cdots(n-1, n),\\
              \uptau((1,f))t& =t, \quad \rho((1,f))q = -q, \quad \sg((1,f))= (0,n)\cdots(n-1, 2n-1);
             \end{split}
\end{align}

if $N=2n+1$, notice
$$H_{2n+1} \cong D_{4n+2} =\langle (g, f), (h,1) | (g,f)^{4n+2} = (h,1)^2 =1, \big((g,f)(h,1)\big)^2 =1 \rangle, $$
\begin{align}
 \label{odd} \begin{split}
              \uptau((g,f))t & = t-1/2, \quad \rho((g,f)) q = -\bar{q}, \quad \sg((g,f))=(0, \dots, 2n)^{n+1}, \\
              \uptau((h,1))t &= 1-t, \quad \rho((h,1)) q = \bar{q}, \quad \sg((h,1)) = (0, 2n)\cdots(n-1, n+1).
             \end{split}
\end{align}

$D_N$, as a subgroup of $H_N$, has an action induced from $H_N$ as following: 
$$ \uptau((g,1))t = t-(n+1), \;\; \rho((g,1))q = q,  \;\; \sg((g,1))= \sg((g,f)), $$
$$ \uptau((h,1)) t = 1-t, \;\; \rho((h,1)) q = \bar{q}, \;\; \sg((h,1)) = (0, N-1)\cdots(\nf, N-1-\nf).$$
To see the first one notice that $(g,1)= (g,f)^{2n+2}$. This is the same as the action of $D_N$ defined in Section \ref{sec intro}. As a result, $\Lmd_N^{H_N} \subset \Lmd^{D_N}$.

Let $\Lmd_{N}^{H_N, +}$ be the subset of all loops in $\Lmd_N^{H_N}$ satisfies the monotone constraints given in Definition \ref{mc}. For each $\om \in \Om_N$ satisfies 
\begin{equation}
 \label{eq: om} |\om_j - \om_{N-j}| = \begin{cases}
 2, \;\; &\text{ if } N =2n+1, \\
 0, \;\; &\text{ if } N =2n, 
 \end{cases} \;\; \forall j \in \{1, \dots, \mf{n}\}, 
 \end{equation}
 let $\Lmd_{N, \om}^{H_N, +}$ be the subset of all loops in $\Lmd_{N}^{H_N, +}$ satisfies the $\om$-topological constraints. Then by the similar arguments given in Section \ref{sec proof}, we can prove the following result. 
\begin{thm}
 \label{thm 2} For each $\om \in \Om_N$ satisfying \eqref{eq: om}, the action functional $\ac$ has at least one minimizer $\zo \in \Lmd_{N, \om}^{H_N, +}$. Furthermore $\zo$ is collision-free and a simple choreography $z^{\om}(t) \in \hat{\Lmd}_N^{H_N}$ of equation \eqref{N body} satisfying all the properties in Theorem \ref{thm 1} and the following:
 \begin{enumerate}
  \item[(d).] 
  $
   \zo_0(t) = \begin{cases}
   -\zo_0(N/2 -t), \; &\text{ if } N=2n+1, \\
   -\bar{\zo_0}(N/2-t), \; &\text{ if } N=2n, 
   \end{cases} \;\; \forall t \in \rr/N\zz. 
  $
  \end{enumerate}
\end{thm}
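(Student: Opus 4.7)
The proof mirrors Section \ref{sec proof} step by step, adapted to the smaller equivariant class. Two observations are crucial at the outset: $\Lmd_N^{H_N} \subset \Lmd^{D_N}_N$, so all the relations from Section \ref{sec proof} carry over; and condition \eqref{eq: om} is precisely what makes the $\om$-topological constraint compatible with the extra $\zz_2$. Indeed, for $N=2n$ the $(1,f)$-equivariance forces $z_0(t+n)=-z_0(t)$, which combined with \eqref{eq: symm 2} and $N$-periodicity yields $\Im(z_0(j/2)) = \Im(z_0((N-j)/2))$, equivalent to $\om_j=\om_{N-j}$; the case $N=2n+1$ is analogous via $(g,f)$-equivariance and produces $\om_j=-\om_{N-j}$. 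With \eqref{eq: om} in place, $\Lmd^{H_N,+}_{N,\om}$ is nonempty and weakly closed, and the coercivity argument of Proposition \ref{lm 1} applies verbatim (it used only \eqref{chore eq}, \eqref{eq: symm 2}, and \eqref{0}), yielding an action minimizer $\zo \in \Lmd^{H_N,+}_{N,\om}$.

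Next one shows that $\zo$ satisfies the strictly monotone constraints by $H_N$-equivariant versions of Lemmas \ref{nondecreasing}, \ref{degenerate}, \ref{lm: even strict}, and \ref{lm: odd strict}. Each of those proofs builds a competitor by horizontal shifts of selected masses; in the $H_N$-setting any shift of $m_k$ by $+c$ in the real direction must be paired with a shift of $-c$ on its $\zz_2$-partner, since $\rho$ acts by $q \mapsto -q$. Performing the deformations in such symmetric pairs keeps the competitor inside $\Lmd^{H_N,+}_{N,\om}$ and only doubles the kinetic and potential variations, so the same strict-inequality contradictions persist. The binary collision exclusion at $t=0$ and $t=1/2$ via Lemmas \ref{thtp1} and \ref{thtp2} extends similarly: the $\zz_2$ action pairs each collision between $m_j$ and $m_{N-j}$ with a mirror collision between the partner masses, and \eqref{eq: om} guarantees that the asymptotic angles $\tht_j^+$ at both sites lie in the same regime (both in $(-\pi/2,\pi/2]$ or both equal to $-\pi/2$), so Proposition \ref{bicoll} or the global deformation of Lemma \ref{thtp2} can be applied simultaneously at both collision sites, doubling the action decrement.

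Once $\zo$ is a collision-free minimizer in $\hat{\Lmd}_N^{H_N}$ satisfying the strictly monotone constraints, Palais's symmetric criticality principle gives that it solves \eqref{N body}; property \eqref{eq: dot x > 0} is obtained by the argument of Lemma \ref{strictly increasing} applied $H_N$-equivariantly, and property (d) is a direct reading of $(1,f)$-equivariance combined with \eqref{eq: symm 2} for $N=2n$, respectively of $(g,f)$-equivariance combined with \eqref{eq: symm 2} for $N=2n+1$. The main obstacle throughout is the bookkeeping of the collision step: verifying that the mirror collision produced by the $\zz_2$ action has compatible angular and topological data so that the local or global deformation can be performed equivariantly. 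This verification reduces in each case to \eqref{eq: om} combined with the strictly monotone constraints already at hand, and introduces no essentially new analytic input beyond that of Section \ref{sec proof}.
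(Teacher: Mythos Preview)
Your proposal is correct and follows exactly the approach the paper indicates: the paper itself does not give a detailed proof of Theorem \ref{thm 2} but only remarks (Remark (i) following the statement) that it ``can be proved by the same argument used in the proof of Theorem \ref{thm 1}'' provided one keeps the additional symmetric constraints intact during the deformations, and refers to \cite{Y16s} for the bookkeeping. Your outline is in fact more explicit than what the paper records---your verification that \eqref{eq: om} encodes compatibility of the $\om$-constraints with the extra $\zz_2$, and your observation that the paired collisions carry the same asymptotic angle and the same $\om_{2j}$, are precisely the checks the paper leaves to the reader or to the cited reference.
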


\begin{rem}
\begin{enumerate}
\item[(i).] The above theorem can be proved by the same argument used in the proof of Theorem \ref{thm 1}. However during the deformation one needs to make sure the addition symmetric constraints are satisfied afterward. Similar problems and the detailed arguments can be found in another paper by the author \cite{Y16s}.
\item[(ii).] Condition \eqref{eq: om} is implied by the symmetric constraints of $H_{N}$. For those $\om$'s satisfying this condition. It will be interesting to see whether the simple choreographies obtained by Theorem \ref{thm 1} and \ref{thm 2} are actually the same. Similar questions were asked by Chenciner regarding the figure eight type solutions in \cite{C02}.  
\item[(iii).] When $N=3$, $H_3 \cong D_6$ and the action of $H_3$ defined above is identical to the one given in Example \ref{ex1}. Similarly when $N=4$, the action of $H_4$ is the same as the one given in Example \ref{ex2}. Hence for $N$ and $\om$ given in Example \ref{ex3} and \ref{ex4}, the simple choreographies obtained by Theorem \ref{thm 2} are exactly the Figure-Eight solution and the Super-Eight solution.
\end{enumerate}
\end{rem}

As we mentioned,  in \cite{Sh14} the actual shape of the Super-Eight solution were confirmed numerically. In the remaining of this section, we prove the following result proposition, which \emph{almost} confirms the shape of the Super-Eight solution without any numerical result.

\begin{prop}
	\label{super eight} Let $\zo$ be the simple choreography obtained in Theorem \ref{thm 2} with $N=4$ and $\om=(-1,1,-1)$. Then between $t=0$ and $t=1/2$, $\zo_1(t)$, as well as $\zo_3(t)$, has exactly one transversal intersection with the real axis; $\zo_0(t)$, as well as $\zo_2(t)$, has at least one and at most two transversal intersection with the real axis. 
\end{prop}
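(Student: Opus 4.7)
Use the full $H_4$-symmetry to reduce the four assertions to two statements about the single function $y_0(t) := \Im(\zo_0(t))$, and then rule out extra ``bubbles'' of its graph by a symmetry-preserving reflection argument which strictly decreases the action.

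First I would perform the reduction. From Theorem~\ref{thm 2}(d) one has $\zo_0(t)=-\bar{\zo}_0(2-t)$; combined with the $D_4$-identity $\zo_0(-t)=\bar{\zo}_0(t)$ and the choreography relation $\zo_j(t)=\zo_0(j+t)$, this yields $\zo_2=-\zo_0$, $\zo_3=-\zo_1$, together with the mirror identity $y_0(s)=y_0(2-s)$. Hence $y_2=-y_0$ and $y_3=-y_1$, and the zeros of $y_0$ on $[1,3/2]$ are in bijection with those on $[1/2,1]$ via $s\mapsto 2-s$. Reading off $y_0(1/2),y_0(3/2)<0$ and $y_0(1)>0$ from $\om=(-1,1,-1)$, the proposition reduces to: (A) $y_0$ has exactly one transversal zero on $(1/2,1)$, and (B) the number of transversal zeros of $y_0$ on $[0,1/2]$ is $1$ or $2$. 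The lower bounds are immediate: (A) from the intermediate value theorem applied to $y_0(1/2)<0<y_0(1)$; (B) from $y_0(0)=0$ together with a genuine sign change there, which holds because $y_0$ is odd and real-analytic in a neighborhood of $0$ and not identically zero (else $\zo_0$ would be real-valued, contradicting $y_0(1)>0$), so its leading Taylor coefficient at the origin is of odd order.

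For the upper bounds I argue by contradiction. Suppose (A) fails, so $y_0$ has three consecutive transversal zeros $\sg_1<\sg_2<\sg_3$ on $(1/2,1)$; then at least one of the sub-intervals $(\sg_1,\sg_2)$, $(\sg_2,\sg_3)$ is an ``extra bubble'' $[a,b]$ whose sign of $y_0$ is not demanded by the $\om$-constraints. I define $\ztl$ by setting $\ztl_0(t)=\bar{\zo}_0(t)$ on the $H_4$-orbit of $[a,b]$, namely on $[a,b]\cup[2-b,2-a]\cup[a+2,b+2]\cup[4-b,4-a]$, and $\ztl_0=\zo_0$ elsewhere on $[0,4]$. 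This $\ztl_0$ is $H_4$-equivariant and continuous (its pieces agree at the endpoints $a,b,\dots$, since $y_0$ vanishes there), and $\ztl$ lies in $\Lmd^{H_N,+}_{N,\om}$: the $x$-component of $\zo_0$ is untouched (preserving the strictly monotone constraints), the values of $y_0$ at $0,1/2,1,3/2,2$ are unchanged (preserving the $\om$-topology), and collision-freeness persists by strict monotonicity. Kinetic energies coincide since $|\dot{\bar{\zo}}_0|=|\dot{\zo}_0|$. Case (B) is handled by the same construction with an extra bubble inside $(0,1/2)$ chosen to avoid $t=0$.

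The crucial step is therefore to show $\ac_U(\ztl)<\ac_U(\zo)$. In the fundamental domain $[0,1/2]$ the modification reduces to conjugating $\zo_1,\zo_3$ on a single sub-interval $[1-b,1-a]$ and leaving $\zo_0,\zo_2$ unchanged; a pointwise computation there gives
\[
|\ztl_j(s)-\ztl_k(s)|^{2}-|\zo_j(s)-\zo_k(s)|^{2}\;=\;\ep_{jk}\cdot 4\,y_j(s)\,y_k(s),
\]
with $\ep_{01}=\ep_{23}=+1$, $\ep_{03}=\ep_{12}=-1$, and the pairs $(0,2),(1,3)$ untouched. Because $\zo_2=-\zo_0$ and $\zo_3=-\zo_1$ force $|\zo_0-\zo_1|=|\zo_2-\zo_3|$ and $|\zo_0-\zo_3|=|\zo_1-\zo_2|=|\zo_0+\zo_1|$, the sign of $\Delta\ac_U$ is governed by that of $y_0(s)\,y_1(s)\cdot\bigl(|\zo_0+\zo_1|^{-3}-|\zo_0-\zo_1|^{-3}\bigr)$, integrated over $[1-b,1-a]$. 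The main obstacle is now a Super-Eight-specific geometric inequality asserting that these two factors have the same sign for every admissible choice of extra bubble; this is where the strictly monotone constraints and the $\om$-topology are used to pin down the positions of the four bodies on the affected sub-interval. Once this inequality is in hand, one obtains $\ac(\ztl)<\ac(\zo)$, contradicting the minimality of $\zo$.
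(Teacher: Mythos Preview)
Your reduction via the $H_4$-symmetry and the overall reflection strategy match the paper's, but the execution of the potential estimate has a genuine gap.

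When you conjugate only $z_1,z_3$ on $[1-b,1-a]\subset(0,1/2)$ and leave $z_0,z_2$ untouched, an exact (not first-order) computation gives
\[
U(\ztl)-U(\zo)\;=\;2\Bigl(\frac{1}{r_1'}-\frac{1}{r_1}\Bigr)+2\Bigl(\frac{1}{r_2'}-\frac{1}{r_2}\Bigr),\quad
r_1=|z_0-z_1|,\ r_2=|z_0+z_1|,\ (r_1')^2=r_1^2+4y_0y_1,\ (r_2')^2=r_2^2-4y_0y_1 .
\]
Since $(r_1')^2+(r_2')^2=r_1^2+r_2^2$, convexity of $s\mapsto s^{-1/2}$ gives $U(\ztl)<U(\zo)$ if and only if the new pair is \emph{less} spread, i.e.\ $|x_0x_1-y_0y_1|<|x_0x_1+y_0y_1|$, which (because $x_0x_1<0$ on $(0,1/2)$) is equivalent to $y_0(s)\,y_1(s)<0$ pointwise. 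Your ``Super-Eight-specific geometric inequality'' is therefore exactly the assertion $y_0y_1<0$ on $[1-b,1-a]$. But the monotone constraints concern only the $x$-components, and the $\om$-topology only pins down $y_0$ at the half-integers; neither controls the sign of $y_0$ on the interior of $(0,1/2)$. So there is no reason for $y_0y_1<0$ to hold on the chosen sub-interval, and your reflection may \emph{raise} the action on part of it. (Incidentally, a direct check gives $\ep_{03}=\ep_{12}=+1$, not $-1$; all four relevant $\ep_{jk}$ are $+1$.)

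The paper closes this gap by reflecting \emph{both} $z_1$ and $z_2$, on separate adaptive sets $\mathbb{T}_1,\mathbb{T}_2\subset[0,1/2]$ chosen according to the signs of $y_2$ at the crossing times of $y_1$. After this double reflection one has $\tilde y_1\tilde y_2\le 0$ on the whole relevant interval, i.e.\ $\tilde z_1,\tilde z_2$ lie in different closed quadrants. The key pointwise inequality is then Shibayama's parallelogram fact: with $z_j^*\in\{z_j,\bar z_j\}$, one has $U(z_1,z_2)\ge U(z_1^*,z_2^*)$, with strict inequality whenever $z_1,z_2$ lie in the same open quadrant and $z_1^*,z_2^*$ do not. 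This holds unconditionally (it follows from the convexity computation above with $x_1x_2>0$), so no extra sign hypothesis on $y_0$ is needed. To repair your argument you must allow the second pair $z_0,z_2$ to be reflected as well, on an interval tailored to the sign of $y_0$---which is precisely what the paper does.
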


\begin{proof}
	For simplicity let $z=\zo$. The four masses always form an parallelogram:
	$$ z_3(t) = -z_1(t), \;\; z_0(t) = z_2(t), \;\; \forall t \in \rr. $$
	As a result the kinetic and negative potential energy only depend on $z_1$ and $z_2$:
	$$ K(\zd) = K(\zd_1, \zd_2) = |\zd_1|^2 + |\zd_2|^2;$$
	$$ U(z) = U(z_1, z_2) = \frac{1}{2|z_1|} + \frac{1}{2|z_2|} + \frac{2}{|z_1 - z_2|} + \frac{2}{|z_1 + z_2|}. $$
	By Theorem \ref{thm 2},  
	\begin{equation}
	\label{iin31} y_1(0) < 0 < y_1(1/2), \quad  y_2(0) = 0 > y_2(1/2),
	\end{equation}
	\begin{equation}
	\label{iin32}  0 =x_1(0) < x_1(t) < x_1(1/2) = x_2(1/2) < x_2(t) < x_2(0), \;\; \forall t \in (0, 1/2).
	\end{equation}

   The key to our proof is the following feature of the parallelogram $4$-body problem we borrowed from \cite{Sh14}. For any $j \in \{1,2\}$, let $z_j^* = z_j $ or $\zb_j$, then 
   \begin{equation}
   \label{iin33} U(z_1, z_2) \ge U(z_1^*, z_2^*).
   \end{equation} 
   In particular, if $z_1,z_2$ are in the same quadrant and $z^*_1, z^*_2$ are not, then
   \begin{equation}
   \label{iin34} U(z_1, z_2) > U(z_1^*, z_2^*).
   \end{equation}
   
   By \eqref{iin31}, $z_1(t)$ has at least one transversal intersection with the real axis. We claim this is the only one. Otherwise let's assume $0<t_0<t_1<t_2<1/2$ are the three earliest moments that an transversal intersection occurs. We define two subsets $\mb{T}_1$, $\mb{T}_2$ of $[0, 1/2]$ as following:   
   \begin{align*}
   \text{if } y_2(t_1) & \le 0, \\
   \mb{T}_1 &:= \{ t \in [t_1, 1/2]: y_1(t) <0 \}, \;\; \mb{T}_2:= \{ t \in [t_1, 1/2]: y_2(t) >0 \};\\
   \text{ if } y_2(t_1) &>0,\; y_2(t_0) \le 0, \\
   \mb{T}_1 &:= \{ t \in [t_0, 1/2]: y_1(t) <0 \}, \;\; \mb{T}_2 := \{ t \in [t_0, 1/2]: y_2(t) >0 \};\\
   \text{ if } y_2(t_1) &> 0,\; y_2(t_0) >0, \\
   \mb{T}_1 &:= \{ t \in [t_0, t_1]: y_1(t) >0 \}, \;\; \mb{T}_2:= \{ t \in [t_0, t_1]: y_2(t) <0 \},\\
   \end{align*}
   and a new path $\ztl(t) = (-\ztl_2(t), \ztl_1(t), \ztl_2(t), -\ztl_1(t))$, as following (see Figure \ref{C8})
   $$ \ztl_j(t) = \begin{cases}
   \zb_j(t) \quad  \text{ if } t \in \mb{T}_j;\\
   z_j(t) \quad \text{ if } t \in [0, 1/2] \setminus \mb{T}_j,
   \end{cases} \;\; \forall j \in \{1, 2\}
   $$
   
   \begin{figure}
   	\centering
   	\includegraphics[scale=0.7]{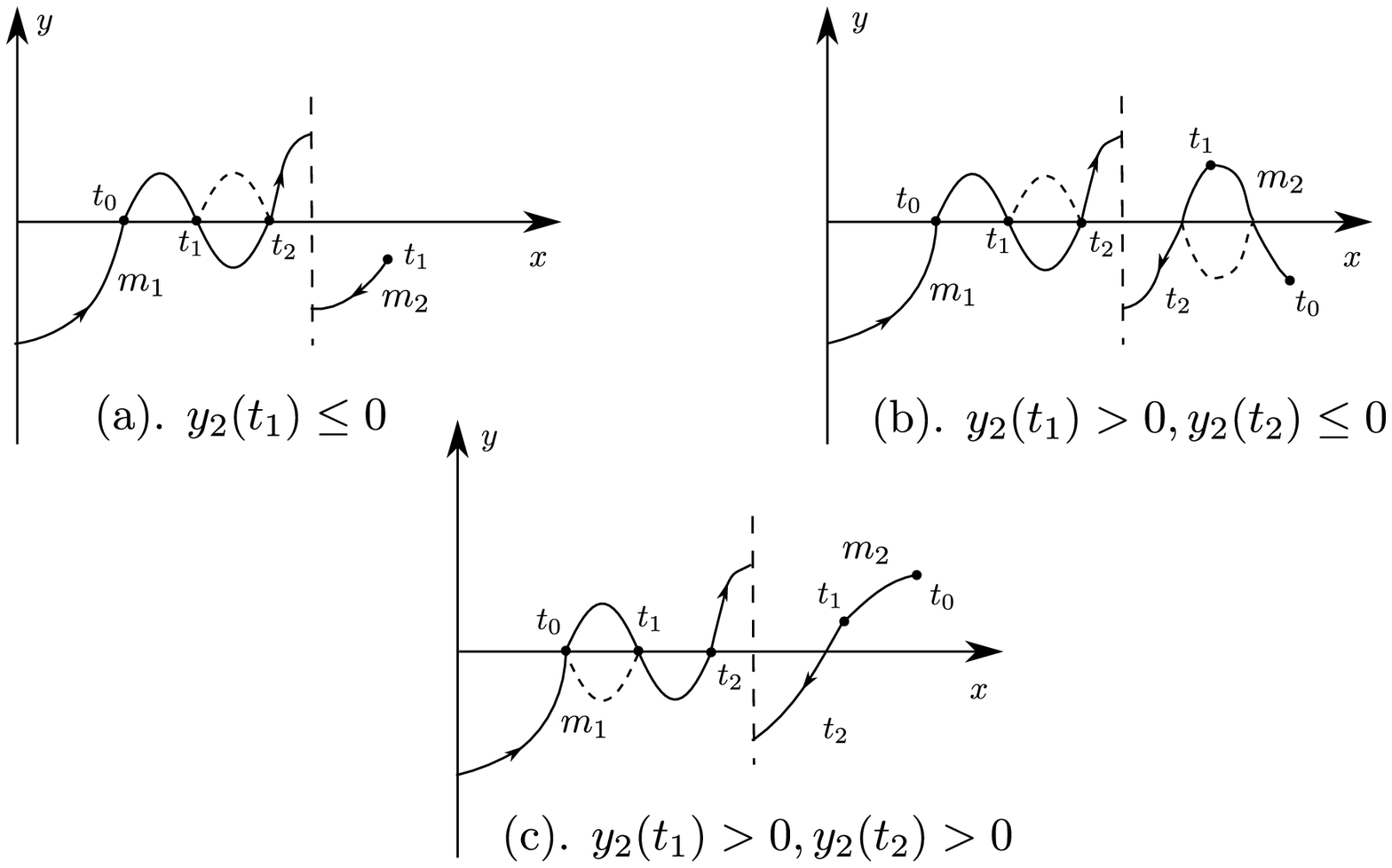}
   	\caption{}
   	\label{C8}
   \end{figure}
   
   By \eqref{iin33}, \eqref{iin34} and the above definition of $\ztl$,
   $$  \int_0^{1/2} K(\zd) \,dt = \int_0^{1/2} K(\dot{\ztl}) \,dt \; \; \int_0^{1/2} U(z_1, z_2) \,dt > \int_0^{1/2} U(\ztl_1, \ztl_2 ) \,dt. $$
   so $\ac(z) > \ac(\ztl)$, which is a contradiction to the minimizing property of $z$. This proves the first half of the proposition. For the second half, as $z_2(t)$ already has an transversal intersection with the real axis at $t=0$, a similar argument as above shows that $z_2(t)$ can have at most one transversal intersection with the real axis besides the one at $t=0$.    
\end{proof}

\section{Appendix}

We give a proof of Lemma \ref{degenerate} in this section. The idea is to combine a local deformation technique near an isolated collision similar to the one introduced by Montgomery in \cite{Mo99} and a blow-up technique introduced by Terracini (see \cite{Ve02}, \cite{FT04} and \cite{Y16b}). In the above references the configurations that serves as the directions of deformation need to be a \emph{centered configuration} (see Definition \ref{centered}).

In our case, we need to make deformations along directions that are not necessarily given by a centered configuration. Without this condition, the action value of the deformed path may not be strictly smaller than the original one. However in our setting, all the masses are traveling on a single vertical line, and we just need to deform along the directions that are orthogonal to this vertical line and the desired result will still hold. 

Let $z \in H^1([0, S], \cc^N)$ be a collision solution satisfying the following conditions: 
\begin{enumerate}
\item[(i).] $z(t)$ is collision-free and satisfies equation \eqref{N body}, for any $t \in (0, S)$;
\item[(ii).] there is a subset of indices $\I \subset \N$ (with its cardinality $|\I| \ge2$), such that $z$ has an isolated $\I$-cluster collision at $t_0=0$ or $S$, i.e., 
\begin{align*}
z_j(t_0)=z_k(t_0), \;\; & \forall \{j \ne k \} \subset \I,\\
z_j(t_0) \ne z_k(t_0), \;\; & \forall j \in \I, \; \forall k \in \N \setminus \I;
\end{align*}
\item[(iii).] $z_j(t) \in i \rr,$ $\forall t \in [0, S]$ and $\forall j \in \N$.
\end{enumerate}
Furthermore, we define a subset of $\{0, \pm 1\}^N$ as following
\begin{equation} 
\label{eqn:cond tau} \mf{T}:=\big\{ \tau = (\tau_j)_{j \in \N}| \; \tau_j \ne \tau_k, \text{ for some } \{j \ne k\} \subset \I \text{ and } \tau_l =0, \; \forall l \in \N \setminus \I  \big\}
\end{equation}

The following three local deformation lemmas will be the key to our proof.
\begin{lm}
  \label{deform1} If $t_0=0$ and $\tau \in \mf{T}$, then for $\ep_0>0$ small enough, there exist $h \in H^1([0,T], \rr)$ and a path $z^{\ep_0} =(z^{\ep_0}_j)_{j \in \N} \in H^1([0, S], \cc^N)$, with $z^{\ep_0}_j(t) =z_j(t)+ \ep_0 h(t) \tau_j$, $t \in [0, S]$, $\forall j \in \N$,  satisfying $\ac(z^{\ep_0}, S)< \ac(z, S)$ and the following:
\begin{enumerate}
\item[(a).] $h(t)=1$, $\forall t \in [0, \dl_1]$, for some $0< \dl_1=\dl_1(\ep_0)< S$ small enough; 
\item[(b).] $h(t) =0$, $\forall t \in [\dl_2, S]$, for some $\dl_1< \dl_2=\dl_2(\ep_0) < S$ small enough;
\item[(c).] $h(t)$ is decreasing for $t \in [\dl_1, \dl_2]$. 
\end{enumerate}
 \end{lm}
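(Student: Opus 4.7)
The plan is to build $h$ as an explicit piecewise-linear cut-off and exploit the collinearity hypothesis (iii) to make the deformation almost free in kinetic energy while yielding a divergent gain in potential energy. For parameters $0 < \dl_1 < \dl_2 < S$ to be chosen, set $h(t) = 1$ on $[0, \dl_1]$, $h(t) = (\dl_2 - t)/(\dl_2 - \dl_1)$ on $[\dl_1, \dl_2]$, and $h(t) = 0$ on $[\dl_2, S]$, so properties (a)--(c) are built in. The candidate $z_j^{\ep_0}(t) := z_j(t) + \ep_0 h(t) \tau_j$ then automatically lies in $H^1$, and the whole problem reduces to choosing the two parameters so that $\ac(z^{\ep_0}, S) < \ac(z, S)$.

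The central orthogonality observation is that (iii) forces $\dot z_j(t) \in i\rr$, while $\ep_0 h'(t) \tau_j$ is real. Hence the cross term in $|\dot z_j + \ep_0 h' \tau_j|^2$ vanishes, and likewise $|z_j^{\ep_0}(t) - z_k^{\ep_0}(t)|^2 = |z_j(t) - z_k(t)|^2 + \ep_0^2 h(t)^2 (\tau_j - \tau_k)^2$ pair-by-pair. Consequently the kinetic cost is purely quadratic and equal to $C_1 \ep_0^2/(\dl_2 - \dl_1)$, while $U(z^{\ep_0}(t)) \le U(z(t))$ at every $t$; pairs $\{j, k\} \subset \I$ with $\tau_j = \tau_k$ contribute nothing to $\ac_U(z^{\ep_0}) - \ac_U(z)$, and pairs $\{j, l\}$ with $j \in \I$, $l \notin \I$ contribute at most $O(\ep_0)$ uniformly in $\dl_1, \dl_2$, since $|z_j - z_l|$ is bounded below on $[0, \dl_2]$ by the isolation of the collision.

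The gain comes from a pair $\{j_0, k_0\} \subset \I$ with $\tau_{j_0} \ne \tau_{k_0}$, which exists by the definition of $\mf{T}$. Sundman's estimate at the isolated cluster collision $t = 0$ supplies $|z_{j_0}(t) - z_{k_0}(t)| \le C_2 t^{\se}$ for small $t$; choosing $\dl_1 = c_1 \ep_0^{3/2}$ with $c_1$ small makes this quantity $\le \ep_0 |\tau_{j_0} - \tau_{k_0}|/2$ on $[0, \dl_1]$, whence the perturbation dominates and $1/|z_{j_0}^{\ep_0} - z_{k_0}^{\ep_0}| \le 2/(\ep_0 |\tau_{j_0} - \tau_{k_0}|)$ there. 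Combining this with the lower bound $1/|z_{j_0} - z_{k_0}| \ge 1/(C_2 t^{\se})$ from the same estimate, the $\{j_0, k_0\}$ contribution to $\ac_U(z^{\ep_0}) - \ac_U(z)$ on $[0, \dl_1]$ is at most
\[
\frac{2 \dl_1}{\ep_0 |\tau_{j_0} - \tau_{k_0}|} - \frac{3 \dl_1^{1/3}}{C_2} = c_1 \ep_0^{1/2}\!\left( \frac{2}{|\tau_{j_0} - \tau_{k_0}|} - \frac{3}{C_2 c_1^{2/3}} \right),
\]
which is $-\Theta(\ep_0^{1/2})$ once $c_1$ is small enough. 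Fixing $\dl_2 - \dl_1$ equal to a small absolute constant makes the kinetic cost $C_1 \ep_0^2/(\dl_2 - \dl_1) = O(\ep_0^2)$, negligible against this $-\Theta(\ep_0^{1/2})$ gain; for $\ep_0$ small we thus obtain $\ac(z^{\ep_0}, S) < \ac(z, S)$.

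The main obstacle is the quantitative use of Sundman's estimate to simultaneously force (i) the perturbation to resolve the collision on $[0, \dl_1]$, contributing the linear cost $\sim \dl_1/\ep_0$ from the resolved Coulomb pair, and (ii) the divergent gain $\sim \dl_1^{1/3}$ from regularising the $t^{-2/3}$ singularity in $U$. The Newtonian balance $\dl_1 \sim \ep_0^{3/2}$ matches both to order $\ep_0^{1/2}$, and the constant in front of the gain wins for a sufficiently small $c_1$; this is exactly where the argument is special to $\al \ge 1$ and would fail for weaker homogeneous potentials, consistently with the remark at the end of Section \ref{sec proof}.
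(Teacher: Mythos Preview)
Your argument is correct and in fact more direct than the paper's. The paper proceeds via Terracini's blow-up technique: it first proves the analogous inequality for the limiting homothetic--parabolic solution $i\tilde v$ (Lemma~\ref{deform0}, using Montgomery's cut-off $f$ in~\eqref{f}), then uses a convergence lemma (Lemma~\ref{deform limit}) to transfer the inequality to the blow-ups $z^{\lambda_n}$ of $z$, and finally rescales. You bypass all of this by working directly on $z$: the orthogonality coming from condition~(iii) kills the kinetic cross term and forces $U(z^{\ep_0}(t)) \le U(z(t))$ pointwise, so that only the \emph{upper} bound $|z_{j_0}(t) - z_{k_0}(t)| \le C_2 t^{2/3}$ from the cluster-collision asymptotics is needed, not the full central-configuration limit. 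Both proofs rest on the same scaling balance ($\dl_1 \sim \ep_0^{3/2}$ producing a $-\Theta(\ep_0^{1/2})$ potential gain against a smaller kinetic cost); the paper's Lemma~\ref{deform0} is essentially your computation carried out on $i\tilde v$ rather than on $z$ itself. What your approach buys is the elimination of the blow-up and convergence machinery; what the paper's approach buys is that it fits into the general framework of \cite{FT04}, which would still apply even without the collinearity hypothesis~(iii).

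One cosmetic point: fixing $\dl_2 - \dl_1$ to a small absolute constant means $\dl_2$ does not tend to $0$ with $\ep_0$, which arguably does not match the phrase ``small enough'' in property~(b). Taking instead $\dl_2 - \dl_1 = \ep_0$ gives $\dl_2 \to 0$, a kinetic cost $C_1 \ep_0$ (still dominated by $-\Theta(\ep_0^{1/2})$), and makes your $h$ coincide, after rescaling, with the function $f$ in~\eqref{f}.
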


 \begin{lm}
  \label{deform2} If $t_0=S$ and $\tau \in \mf{T}$, then for $\ep_0>0$ small enough, there exist $h \in H^1([0,T], \rr)$ and a path $z^{\ep_0}=(z^{\ep_0}_j)_{j \in \N} \in H^1([0, S], \cc^N)$, with $z^{\ep_0}_j(t)=z_j(t)+ \ep_0 h(t) \tau_i$, $t \in [0,S]$, $\forall j \in \N$, satisfying $\ac(z^{\ep_0}, S)< \ac(z, S)$ and the following:
\begin{enumerate}
\item[(a).] $h(t)=1$, $\forall t \in [S-\dl_1, S]$, for some $0< \dl_1=\dl_1(\ep_0)< S$ small enough; 
\item[(b).] $h(t) =0$, $\forall t \in [0, S-\dl_2]$, for some $\dl_1< \dl_2=\dl_2(\ep_0) < S$ small enough;
\item[(c).] $h(t)$ is increasing for $t \in [S-\dl_2, S-\dl_1]$. 
\end{enumerate}
 \end{lm}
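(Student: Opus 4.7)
The plan is to reduce Lemma \ref{deform2} directly to Lemma \ref{deform1} via the time-reversal involution on the interval $[0,S]$. The Lagrangian $L(z,\zd) = K(\zd) + U(z)$ depends on the velocity only through $|\zd|^2$, and the Newtonian system \eqref{N body} is a second-order ODE; hence both the action functional and the property of solving \eqref{N body} are invariant under the substitution $t \mapsto S - t$. This simple symmetry exchanges the endpoints of $[0,S]$, converting a collision at $t_0 = S$ into a collision at $t_0 = 0$ without affecting any of the other hypotheses.

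First I would set $\ztl(t) := z(S-t)$ and check hypotheses (i)--(iii) for $\ztl$: it is collision-free and a solution of \eqref{N body} on $(0,S)$, it has the same isolated $\I$-cluster collision but now located at $\tilde{t}_0 = 0$, and it still takes values in $i\rr$ componentwise. The subset $\mf{T}$ depends only on $\I$, so the same $\tau \in \mf{T}$ is admissible for $\ztl$. Applying Lemma \ref{deform1} to $\ztl$ with this $\tau$ yields $\tilde{h} \in H^1([0,S],\rr)$ and a deformation $\ztl^{\ep_0}(t) = \ztl(t) + \ep_0 \tilde{h}(t)\tau$ with $\ac(\ztl^{\ep_0}, S) < \ac(\ztl, S)$ and with $\tilde{h}$ equal to $1$ on $[0,\dl_1]$, equal to $0$ on $[\dl_2, S]$, and decreasing on $[\dl_1,\dl_2]$.

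Next I would push this back through the reversal by defining $h(t) := \tilde{h}(S-t)$ and $z^{\ep_0}(t) := \ztl^{\ep_0}(S-t) = z(t) + \ep_0 h(t) \tau$. A direct check shows: $h(t) = 1$ on $[S - \dl_1, S]$ (so (a) holds), $h(t) = 0$ on $[0, S - \dl_2]$ (so (b) holds), and on $[S-\dl_2, S-\dl_1]$ the function $h = \tilde{h}\circ(S-\cdot)$ is the composition of a decreasing function with a decreasing function, hence increasing (so (c) holds). The change of variable $t \mapsto S - t$ is measure-preserving and reverses the sign of velocities while preserving their moduli, so
\[
\ac(z^{\ep_0}, S) = \int_0^S L\big(\ztl^{\ep_0}(S-t), -\dot{\ztl}^{\ep_0}(S-t)\big)\,dt = \ac(\ztl^{\ep_0}, S) < \ac(\ztl, S) = \ac(z, S),
\]
which gives the required action drop.

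The only subtle point is the verification that Lemma \ref{deform1} really does apply verbatim to $\ztl$ — no additional orientation or monotonicity on the path $z$ is used in the statement of Lemma \ref{deform1}, only the static conditions (i)--(iii) on the collision structure and the condition $\tau \in \mf{T}$, all of which are manifestly preserved by $t\mapsto S-t$. Since Lemma \ref{deform1} itself is where the substantive analytic work is carried out (Sundman-type expansion near the collision, Terracini's blow-up, and the comparison against a perturbation orthogonal to the imaginary axis), no additional estimates are required here: the whole content of Lemma \ref{deform2} reduces to reindexing the time variable.
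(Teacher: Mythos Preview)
Your proposal is correct and matches the paper's own approach exactly: the paper does not give a separate proof of Lemma \ref{deform2} but simply remarks that it ``can be proven similarly after reversing the time parameter,'' which is precisely the reduction $t\mapsto S-t$ to Lemma \ref{deform1} that you carry out. Your verification that hypotheses (i)--(iii), the set $\mf{T}$, and the action are preserved under time reversal, together with the translation of properties (a)--(c) for $\tilde h$ into those for $h$, fills in the details the paper leaves implicit.
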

 
While we can apply the above two lemmas to isolated collision occurring at the boundaries of a fundamental domain, we need another lemma for those occurring in the interior. For this, let $t_0=0$ and extend the domain of $z(t)$ to $t \in [-S, S]$. Furthermore we assume $z$ satisfies the following additional conditions: 
\begin{enumerate}
\item[(iv).] $z(t)$ is collision-free and satisfies equation \eqref{N body}, for any $t \in (-S, 0)$;
\item[(v).] $z_j(t) \in i \rr,$ $\forall t \in [-S, 0]$ and $\forall j \in \N$;
\item[(vi).] $K(\dot{z}(t))-U(z(t))= \text{Constant}$, for any $t \in (-S, S) \setminus \{0\}$.
\end{enumerate}
 
 \begin{lm}
  \label{deform3} If $\tau \in \mf{T}$, then for $\ep_0>0$ small enough, there exist $h \in H^1([-S,S], \rr)$ and a path $z^{\ep_0}=(z^{\ep_0}_j)_{j \in \N} \in H^1([-S, S], \cc^N)$, with $z^{\ep_0}_i(t)=z_i(t)+ i \ep_0 h(t) \tau_i, t \in [-S, S]$, $\forall j \in \N$, satisfying $\ac(z^{\ep_0}, -S, S)< \ac(z, -S, S)$ and the following:
\begin{enumerate}
\item[(a).] $h(t)=1$, $\forall t \in [-\dl_1, \dl_1]$, for some $0< \dl_1=\dl_1(\ep_0)< S$ small enough; 
\item[(b).] $h(t) =0$, $\forall t \in [-S, -\dl_2] \cup [\dl_2, S]$, for some $\dl_1< \dl_2=\dl_2(\ep_0) < S$ small enough;
\item[(c).] $h(t)$ is increasing for $t \in [-\dl_2, -\dl_1]$;
\item[(d).] $h(t)$ is decreasing for $t \in [\dl_1, \dl_2]$.  
\end{enumerate} 
\end{lm}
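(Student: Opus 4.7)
The plan is to derive Lemma \ref{deform3} by gluing together the one-sided local deformations provided by Lemma \ref{deform1} and Lemma \ref{deform2} at the interior isolated collision $t_0 = 0$. Lemmas \ref{deform1} and \ref{deform2} themselves have non-trivial proofs based on a blow-up analysis near an isolated collision on the line $i\rr$ (Sundman--Sperling asymptotics combined with the observation that deforming along $\tau \in \mf{T}$ with $\tau_j \ne \tau_k$ for some $\{j,k\}\subset\I$ strictly separates the colliding cluster and lowers the action in the rescaled limit). But Lemma \ref{deform3} requires no new analytic content beyond those one-sided results, only a gluing argument, so I would proceed as follows.

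First, split $[-S, S]$ at the collision. The restriction $z|_{[0, S]}$ is a collision solution satisfying hypotheses (i)--(iii) of Lemma \ref{deform1}, so that lemma supplies an $H^1$ bump $h^+$ on $[0,S]$ equal to $1$ on $[0, \dl_1]$, vanishing on $[\dl_2, S]$, and decreasing on $[\dl_1, \dl_2]$, together with a deformation $z^{\ep_0, +}_j(t) = z_j(t) + i\ep_0 h^+(t) \tau_j$ whose action on $[0, S]$ is strictly smaller than that of $z$. Next, the time-reversed path $t \mapsto z(-t)$ on $[0, S]$ is a collision solution with the collision at the right endpoint and satisfies the hypotheses of Lemma \ref{deform2}; applying that lemma with the same $\tau$ and $\ep_0$, then pulling back under $t \mapsto -t$, yields $h^- \in H^1([-S, 0],\rr)$ equal to $1$ on $[-\dl_1, 0]$, vanishing on $[-S, -\dl_2]$, and increasing on $[-\dl_2, -\dl_1]$, with the corresponding action decrease on $[-S, 0]$. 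By symmetry of the construction the parameters $\dl_1, \dl_2$ can be taken identical on the two sides.

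Finally, define $h \in H^1([-S, S], \rr)$ by setting $h = h^-$ on $[-S, 0]$ and $h = h^+$ on $[0, S]$; conditions (a)--(d) are then immediate. The two one-sided deformed paths agree at $t = 0$, both producing $z_j(0) + i\ep_0 \tau_j$, so they combine into a single $H^1$ path $z^{\ep_0}$ on $[-S, S]$. Additivity of the action across $t=0$, together with the two strict one-sided inequalities, yields $\ac(z^{\ep_0}, -S, S) < \ac(z, -S, S)$. The main subtlety, and the reason Lemma \ref{deform3} carries the additional energy-conservation hypothesis (vi) absent from the one-sided lemmas, is the compatibility at $t = 0^\pm$: the Sundman--Sperling expansions on the two sides must share the same leading coefficient and direction so that a single pair $(\tau, \ep_0)$ produces matching deformations from both sides, and this is what energy conservation across the collision enforces. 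The hard part is therefore hidden inside Lemmas \ref{deform1} and \ref{deform2}; once those are in hand, Lemma \ref{deform3} is essentially a bookkeeping exercise.
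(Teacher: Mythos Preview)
Your approach is correct and is exactly what the paper intends: it states only that Lemma \ref{deform3} ``follows directly once we have the previous two lemmas,'' and your gluing of the one-sided deformations from Lemmas \ref{deform1} and \ref{deform2} is the natural realization of that remark.

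One comment on your closing paragraph: you overstate the role of hypothesis (vi). The gluing you describe needs nothing beyond $h^+(0)=h^-(0)=1$, which already forces both one-sided deformations to produce the common value $z_j(0)+\ep_0\tau_j$ at $t=0$; the limiting central configurations $\tilde\sigma^\pm$ on the two sides enter only into the internal construction of $h^\pm$ and need not coincide. In particular, no matching of Sundman--Sperling expansions is required for your argument, and the strict action decrease on $[-S,S]$ follows simply by adding the two strict one-sided inequalities for a common small $\ep_0$. (The asymmetry in $\dl_1,\dl_2$ between the two sides is harmless: take $\dl_1=\min(\dl_1^+,\dl_1^-)$ and $\dl_2=\max(\dl_2^+,\dl_2^-)$, using that constant pieces of $h$ are trivially monotone.)
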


Only the proof of Lemma \ref{deform1} will be given in the following (Lemma \ref{deform2} can be proven similarly after reversing the time parameter and Lemma \ref{deform3} follows directly once we have the previous two lemmas), so let's assume $z(0)$ is an isolated $\I$-cluster collision in the following. Before we can proceed, some notations and technical lemmas need to be introduced first. 

We define the Lagrange and the corresponding action functional of the $\I$-body sub-system as following: 
$$L_{\I}(z,\zd) := \kk(\zd) + \uk(z), \;\;  \ack(z, T) := \int_0^T L_{\I}(z, \zd) \,dt,$$
$$ \kk(\zd) := \ey \sum_{j \in \I} |\zd_j|^2,\;\; \uk(z) := \sum_{\{j <k\} \in \I} \frac{1}{|z_j - z_k|}.  $$

\begin{dfn} \label{centered}
 we say $w$ is a $\I$-configuration, if $w = (w_j)_{j \in \I} \in \cc^{|\I|}$, and a centered $\I$-configuration, if $\sum_{j \in \I} w_j = 0$. 
\end{dfn}

Let $\zh(t) =\hat{x}(t)+i \hat{y}(t) = \frac{1}{|\I|} \sum_{j \in \I} z_j(t)$ be the center of mass of the $\I$-body sub-system. By condition (iii) given above, we know $x_j(t) \equiv 0$, $\forall j \in \I$, and so is $\hat{x}(t)$. As a result, we define $iv(t)= i(v_j(t))_{j \in \I}$ as
  $$ v_j(t)= y_j(t)- \hat{y}(t), \;\; \forall j \in \I.$$
Each $iv_j(t)$ represents the relative position of $m_j$, $j \in \I$, with respect to the center of mass of the $\I$-body sub-system.

\begin{dfn}
 \label{blow up} For any $0<\lmd<1$, we define $z^{\lmd} \in H^1([0, S/\lmd], \cc^N)$, $z^{\lmd}(t) = \lmd^{-\se} z(\lmd t),$ as the \textbf{$\lmd$-blow-up} of $z$, and $ v^{\lmd} \in H^1([0, S/ \lmd], \rr^{|\I|})$, $v^{\lmd}(t)= \lmd^{-\se} v(\lmd t)$, as 
 the \textbf{$\lmd$-blow-up} of $v$. 
\end{dfn}

Let $\sg(t) = \frac{v(t)}{|v(t)|}$ be the normalization of $v(t)$, where $|v(t)| = \big(\sum_{j \in \I} |v_j(t)|^2\big)^{\ey}$. It is well known that as $\{t_n\} \searrow 0$, the limit of $i\sg(t_n)$, if exists, is a central configuration of the $\I$-body problem, for a proof see \cite{FT04}. Since the space of all normalized $\I$-configuration is compact, we can always find a sequence of positive numbers $\{ \lmd_n \} \searrow 0$ with the following limit exist
$$ \lim_{n \to \infty} i\sg(\lmd_n) = i \sgt = i(\sgt_j)_{j \in \I}, \;\; \text{ with } \sgt_j \in \rr. $$
Then $i \sgt$ is a $\I$-central configuration, and $i\vt(t)$, $t \in [0, +\infty),$ given below is a homothetic-parabolic solution associated with $i\sgt$
\begin{equation}
 \label{vtilde} i\vt(t) = i(\vt_j(t))_{j \in \I}, \text{ where } \vt_j(t) = (\kp t)^{\se} \sgt_j, \;\; \forall j \in \I,
 \end{equation}
where $\kp$ is a positive constant determined by $i \tilde{\sg}$. 

The homothetic-parabolic solution $i\vt(t)$ is related to the isolated collision solution $z(t)$ through the blow-up's $v^{\lmd_n}(t)$ in the following way (for a proof see \cite[(7.4)]{FT04}). 

\begin{prop}
 For any $T>0$, the sequences $\{v^{\lmd_n} \}$ and $\{ \frac{d v^{\lmd_n}}{dt} \}$ converge to $\vt$ and its derivative $\dot{\vt}$ correspondingly. Furthermore the convergences are uniform on $[0, T]$ and on compact subsets of $(0,T]$ correspondingly. 
\end{prop}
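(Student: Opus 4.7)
The plan is to combine Sundman-type estimates near the isolated $\I$-cluster collision with an Arzel\`a--Ascoli compactness argument and the uniqueness of the homothetic-parabolic motion associated to a central configuration.

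First I would establish uniform bounds on the blow-ups. Applying the classical Sundman estimate to the $\I$-cluster subsystem (the interactions with bodies outside $\I$ are bounded in a neighborhood of $t=0$, so the standard proof of Proposition \ref{prop: sudman} carries over) yields $|v(t)| \le C t^{2/3}$ and $|\dot v(t)| \le C t^{-1/3}$ for $t$ small. After rescaling one has
$$ |v^{\lmd}(t)| \le C t^{2/3}, \qquad |\dot v^{\lmd}(t)| \le C t^{-1/3}, $$
with constants independent of $\lmd$. Since $t^{-1/3}$ is integrable on $[0,T]$, the family $\{v^{\lmd_n}|_{[0,T]}\}$ is uniformly bounded and equicontinuous, so Arzel\`a--Ascoli furnishes a subsequence converging uniformly on $[0,T]$ to a continuous limit $v^*$ with $v^*(0)=0$.

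Next I would derive the ODE satisfied by $v^{\lmd}$ and pass to the limit. Starting from Newton's equation for $z$ and using $|z_j - z_k| \ge c > 0$ for $j \in \I,\, k \notin \I$ near the collision, subtraction of the cluster center-of-mass equation gives
$$ \ddot v_j = -\sum_{k \in \I \setminus \{j\}} \frac{v_j - v_k}{|v_j - v_k|^3} + R_j(t), \qquad R_j(t) = O(1). $$
Using $v(\lmd t) = \lmd^{2/3} v^{\lmd}(t)$ together with the $-2$ homogeneity of the Newtonian vector field, the powers of $\lmd$ in the Newtonian term cancel and one obtains
$$ \ddot v^{\lmd}_j(t) = -\sum_{k \in \I \setminus \{j\}} \frac{v^{\lmd}_j - v^{\lmd}_k}{|v^{\lmd}_j - v^{\lmd}_k|^3} + \lmd^{4/3} R_j(\lmd t). $$
On each compact subset of $(0,T]$ the error term vanishes as $\lmd \to 0^+$ and the Newtonian vector field is smooth there; a bootstrap from $C^0$ to $C^2$ convergence then shows that $v^*$ solves the isolated $\I$-body Newton equation on $(0,T]$, and that $\dot v^{\lmd_n} \to \dot v^*$ uniformly on compact subsets of $(0,T]$.

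Finally I would identify $v^*$ with $\vt$ and promote subsequential convergence to convergence of the whole sequence. The defining condition $\sg(\lmd_n) \to \sgt$ combined with the Sundman leading-order asymptotic $|v(\lmd_n)| = (\kp \lmd_n)^{2/3}(1+o(1))$ gives $v^{\lmd_n}(1) \to \kp^{2/3} \sgt = \vt(1)$, pinning down both the direction and magnitude of $v^*$ at $t=1$. Moreover, energy conservation for $z$ together with the fact that interactions with bodies outside $\I$ contribute only $O(1)$ translates under rescaling to $K_{\I}(\dot v^{\lmd}) - U_{\I}(v^{\lmd}) = O(\lmd^{2/3}) \to 0$, so $v^*$ is a zero-energy parabolic collision--ejection motion asymptotic to the central configuration $i\sgt$; by ODE uniqueness such a motion must be homothetic to $i\sgt$, so $v^* = \vt$. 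Because the limit is uniquely determined, no subsequence extraction is needed and the full sequence converges. The main obstacle is precisely this identification: without the central-configuration nature of $\sgt$ and the vanishing of the rescaled energy (both of which are indispensable here), one could not exclude non-homothetic parabolic trajectories as limits.
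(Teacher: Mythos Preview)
The paper does not supply its own proof of this proposition; it simply refers the reader to \cite[(7.4)]{FT04}. Your outline---Sundman bounds, Arzel\`a--Ascoli compactness, passage to the limit in the rescaled ODE, then identification of the limit---is exactly the Ferrario--Terracini scheme, and the first three steps are correct as sketched.

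The identification step, however, has a genuine gap. From $\sg(\lmd_n)\to\sgt$ and the Sundman asymptotic you have only pinned down $v^*(1)=\vt(1)$; together with $v^*(0)=0$ and zero energy this does \emph{not} determine a second-order trajectory, and the assertion that a zero-energy ejection orbit ``asymptotic to $i\sgt$'' must be homothetic is both unproved (you have not shown $v^*(t)/|v^*(t)|\to\sgt$ as $t\to 0^+$) and in general false: the unstable manifold of a central-configuration rest point on the collision manifold is typically not one-dimensional, so non-homothetic zero-energy ejections with the same asymptotic shape exist. One way to close the gap, following \cite{FT04}, is to use a scalar that is monotone along the original collision orbit (the McGehee radial-velocity variable) and therefore has a limit as $t\to 0^+$; under blow-up this scalar at time $t$ equals its value for the original orbit at time $\lmd_n t\to 0$, so it becomes \emph{constant} for $v^*$, and that forces $\dot\sg^*\equiv 0$. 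Alternatively, once the full Sundman asymptotic $|v(s)|=(\kp s)^{2/3}(1+o(1))$ as $s\to 0^+$ is invoked (not merely at $s=\lmd_n$), the blow-up limit satisfies $|v^*(t)|=(\kp t)^{2/3}$ for every $t$; combining this with zero energy at the single instant $t=1$, where $\sg^*(1)=\sgt$, yields $|\dot\sg^*(1)|=0$, hence $\dot v^*(1)=\dot\vt(1)$, and then ordinary ODE uniqueness gives $v^*=\vt$.
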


A local deformation lemma of the above homothetic-parabolic solution will be given first. The key to its proof is the following function first introduced by Montgomery in \cite{Mo99}:
For any $T> \ep >0$, let $f \in C([0, T], \rr)$ be defined as following:
\begin{equation} \label{f}
 f(t) = \begin{cases}
         1, & \text{ if } t \in [0, \ep^{\frac{3}{2}}], \\
         1 + (\ep^{\frac{3}{2}}-t)/\ep, & \text{ if } t \in [\ep^{\frac{3}{2}}, \ep^{\frac{3}{2}}+ \ep], \\
         0, & \text{ if } t \in [\ep^{\frac{3}{2}} + \ep, T].
        \end{cases} 
\end{equation}
For any $\tau \in \mf{T}$ and $\ep>0$ small enough, define $\vt^{\ep}(t) =(\vt^{\ep}_j(t))_{j \in \I}$ as 
\begin{equation}
\label{eq: v tilde ep} \vt^{\ep}_j(t)=i\vt(t)+ \ep f(t) \tau_j, \;\; t \in [0, +\infty).
\end{equation}
\begin{lm}
 \label{deform0} For any $T>0$, $\ack(\vt^{\ep}, T) < \ack(i\vt, T)$, for $\ep>0$ small enough.
\end{lm}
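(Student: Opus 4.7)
The plan is to show $\ack(\vt^{\ep}, T) - \ack(i\vt, T) < 0$ by splitting $[0, T]$ into three pieces according to $f$: the inner interval $[0, \ep^{3/2}]$ where $f \equiv 1$, the transition interval $[\ep^{3/2}, \ep^{3/2}+\ep]$ where $f$ drops linearly from $1$ to $0$, and the outer interval $[\ep^{3/2}+\ep, T]$ where $f \equiv 0$ so the two paths agree. The guiding idea is that the inner interval should produce a strict favorable gain of order $\ep^{1/2}$ in the potential, against an unfavorable kinetic cost of order $\ep$ coming from the transition interval; for $\ep$ small enough the former then dominates.

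For the kinetic piece I would exploit that $i\dot{\vt}_j(t)$ is purely imaginary while $\ep \dot{f}(t)\tau_j$ is real, so the cross terms in $|\dot{\vt}^{\ep}_j|^2$ vanish and the change reduces to $\frac{1}{2}\ep^2\big(\sum_{j \in \I}\tau_j^2\big)\int_0^T \dot{f}(t)^2\,dt$. Since $\dot{f} = -1/\ep$ on an interval of length $\ep$ and vanishes elsewhere, this integral equals $1/\ep$, yielding a clean kinetic cost of order $\ep$.

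For the potential piece, I would use $|\vt^{\ep}_j - \vt^{\ep}_k|^2 = |\vt_j - \vt_k|^2 + \ep^2 f^2(\tau_j-\tau_k)^2 \geq |\vt_j-\vt_k|^2$, so $\uk(\vt^{\ep}) \leq \uk(i\vt)$ pointwise and the change has favorable sign on every subinterval. On the transition interval, the elementary bound $|\uk(\vt^{\ep}) - \uk(i\vt)| \leq \sum_{\{j<k\}} \frac{\ep^2 f^2 (\tau_j-\tau_k)^2}{2|\vt_j-\vt_k|^3}$ together with $|\vt_j-\vt_k| = (\kp t)^{2/3}|\sgt_j-\sgt_k| \geq C_1 \ep$ there gives a change of magnitude at most $O(\ep^{1/2})$. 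On the inner interval, where $f \equiv 1$, the self-similar substitution $t = \ep^{3/2} s$ converts the decrease from each pair into $\ep^{1/2} \int_0^1 \big[(\kp^{2/3}s^{2/3}|\sgt_j-\sgt_k|)^{-1} - (\kp^{4/3}s^{4/3}(\sgt_j-\sgt_k)^2 + (\tau_j-\tau_k)^2)^{-1/2}\big]\,ds$, an $\ep$-independent nonnegative integral scaled by $\ep^{1/2}$. For any pair with $\tau_j \neq \tau_k$ (which exists because $\tau \in \mf{T}$), the right-hand integrand stays bounded at $s = 0$ while the left one diverges like $s^{-2/3}$, so that integral is strictly positive, giving an inner-interval gain of $-C_2 \ep^{1/2}$ with $C_2 > 0$.

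Assembling the three pieces yields $\ack(\vt^{\ep}, T) - \ack(i\vt, T) \leq -C_2\ep^{1/2} + C_3\ep$, which is negative once $\ep$ is sufficiently small. The delicate point is arranging the inner-interval computation so that the gain is of genuine order $\ep^{1/2}$ with a constant independent of $\ep$: this needs the exact self-similar form of the homothetic-parabolic solution to turn the time integral into a scale-invariant one, and it needs the condition $\tau \in \mf{T}$ to guarantee that the integrand actually drops for at least one pair of bodies in $\I$.
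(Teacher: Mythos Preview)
Your proof is correct and follows essentially the same route as the paper: decompose the action difference into the kinetic cost on the transition interval (giving $C_1\ep$ via orthogonality of the real deformation against the purely imaginary $\dot{\vt}$), the potential gain on $[0,\ep^{3/2}]$ (giving $-C_2\ep^{1/2}$ via a self-similar substitution), and the nonpositive potential change on the transition interval. Your magnitude bound on the transition-interval potential is correct but superfluous, since you already know its sign; the paper uses the slightly different substitution $s=t^{2/3}/\ep$ in place of your $t=\ep^{3/2}s$, but the resulting $\ep^{1/2}$-scaling and the use of $\tau\in\mf{T}$ to ensure $C_2>0$ are identical.
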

\begin{proof}
 By the definition of $\vt^{\ep}$ and $f(t)$,  
 \begin{align*}
  & \ack(\vte, T) - \ack(i\vt, T) \\ &  = \int_{\ep^{\frac{3}{2}}}^{\ep^{\frac{3}{2}}+\ep} \kk(\vte) - \kk(i\vt) +\int_0^{\ep^{\frac{3}{2}}} \uk(\vte) - \uk(i\vt) + \int_{\ep^{\frac{3}{2}}}^{\ep^{\frac{3}{2}}+\ep} \uk(\vte) - \uk(i\vt)\\
  & := A_1 + A_2 + A_3.
 \end{align*}
 We estimate each $A_j$, separately in the following. 
 
 For $A_1$, notice that $|\dot{\vt}^{\ep}_j|^2 = |\ep \dot{f} \tau_j|^2 + |\dot{\vt}_j|^ 2,$ $ \forall j \in \I$, then by the definition of $f(t)$,  
  \begin{equation}
    A_1= \ey \sum_{j \in \I} \int_{\ep^{\frac{3}{2}}}^{\ep^{\frac{3}{2}}+\ep} \tau_j^2 \,dt = \frac{\ep}{2} \sum_{j \in \I} \tau_j^2 = C_1 \ep,
  \end{equation}
  $C_1$ is a positive constant, as $\tau_j \ne 0$ for some $j \in \I$.

  To estimate $A_2$, we introduce a new time parameter $s = t^{\se}/\ep,$ then 
 \begin{align*}
  A_2 & = \sum_{\{j<k\} \subset \I} \int_0^{\ep^{\frac{3}{2}}} |\ep(\tau_j - \tau_k)+ i(\vt_j(t)-\vt_k(t))|^{-1} - |\vt_j(t) - \vt_k(t)|^{-1} \,dt \\
  & = \sum_{\{j<k\} \subset \I} \int_0^{\ep^{\frac{3}{2}}} \big(\ep^2(\tau_j - \tau_k)^2+ (\kp t)^{\frac{4}{3}}(\sgt_j - \sgt_k)^2 \big)^{-\ey} - |(\kp t)^{\se}(\sgt_j - \sgt_k)|^{-1} \,dt \\
  & = \frac{3 \ep^{\ey}}{2}\sum_{\{j<k\} \subset \I}  \int_0^1 \big \{ \big((\tau_j - \tau_k)^2  +\kp^{\frac{4}{3}} s^2 (\sgt_j- \sgt_k)^2 \big)^{-\ey} -|\kp^{\se}s(\sgt_j -\sgt_k)| \big \} s^{\ey} \,ds \\
  &  = -C_2 \ep^{\ey}.
  \end{align*}
  The last equality holds for some positive constant $C_2$, as $\tau_j \ne \tau_k$ for some $j \ne k \in \I$. 
  
  For $A_3$, notice that for any $\{j< k\} \subset \I$,
  $$ U_{j,k}(\vt^{\ep}(t)) - U_{j,k}(i\vt(t)) \le 0, \;\; \forall t \in [\ep^{\frac{3}{2}}, \ep^{\frac{3}{2}}+\ep]. $$
  Then $A_3 \le 0. $ Combine the above estimates, we get for $\ep>0$ small enough, 
  $$ \ack(\vte, T) - \ack(i\vt, T) \le C_1 \ep-C_2 \ep^{\ey} <0.$$
\end{proof}
  
\begin{lm}
	\label{deform limit} For any $\tau \in \mf{T}$ and $g(t) \in H^1([0,T], \rr)$, where $g(t)$ is $C^1$ in a neighborhood of $T$. Define $\phi =(\phi_j)_{j \in \I} \in H^1([0, T], \rr^{|\I|})$ as $\phi_j(t) = g(t)\tau_j$, $\forall t \in [0, T]$, $\forall j \in \I$ , and $\{ \psi_n \in H^1([0, T], \cc )\}_{n \in \zz^+}$ as 
	\begin{equation}
	 \label{psi} \psi_n(t) = 
	\begin{cases}
	i\vt(t) - i\vt^{\lmd_n}(t) & \text{ if } t \in [0, T-\frac{1}{N_n}], \\
	N_n(T-t)\big(i\vt(t) - i\vt^{\lmd_n}(t)\big) & \text{ if } t \in  [T-\frac{1}{N_n}, T], 
	\end{cases} 
	\end{equation}	
	where $\{N_n \}$ is a sequence of positive integers goes to infinity, then  
	$$ \lim_{n \to \infty} \ac(z^{\lmd_n} + \phi + \psi_n, T) - \ac(z^{\lmd_n}, T) = \ack( \phi+i\vt, T) - \ack(i\vt, T). $$
\end{lm}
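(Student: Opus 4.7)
The plan is to decompose the action difference by which pairs of bodies interact. Write
$$\ac(z^{\lmd_n}+\phi+\psi_n,T) - \ac(z^{\lmd_n},T) = \Dl_K + \Dl_{U,\I} + \Dl_{U,\mathrm{mix}},$$
where $\Dl_K$ is the change in kinetic energy (which only involves the $\I$-indices because $\phi_j=\psi_{n,j}=0$ for $j\in\N\setminus\I$), $\Dl_{U,\I}$ is the change in $\uk$, and $\Dl_{U,\mathrm{mix}}$ is the change in the $\I$-to-$(\N\setminus\I)$ interaction potential; the outside-outside potential and outside kinetic energy are unchanged. For the mixed term, hypothesis (ii) gives $z_j(0)\ne z_k(0)$ for $j\in\I$ and $k\in\N\setminus\I$, so
$$|z^{\lmd_n}_j(t)-z^{\lmd_n}_k(t)| = \lmd_n^{-\se}|z_j(\lmd_n t)-z_k(\lmd_n t)| \ge c\lmd_n^{-\se}$$
uniformly on $[0,T]$ for $n$ large. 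Since $\phi$ and $\psi_n$ are uniformly bounded (by $\|g\|_{\infty}\max_j|\tau_j|$ and $\sup_{[0,T]}|\vt-v^{\lmd_n}|\to 0$ respectively), the perturbed mixed distances obey the same bound; hence both integrals composing $\Dl_{U,\mathrm{mix}}$ are $O(\lmd_n^{\se})$ and $\Dl_{U,\mathrm{mix}}\to 0$.

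The heart of the argument is a translation identity for the internal terms. Since $z_j = i(v_j+\hat y)$ for $j\in\I$, the blow-up satisfies $z^{\lmd_n}_j = iv^{\lmd_n}_j + i\lmd_n^{-\se}\hat y(\lmd_n t)$, and because $\psi_{n,j}=i\vt_j-iv^{\lmd_n}_j$ on $[0,T-1/N_n]$,
$$z^{\lmd_n}_j(t)+\psi_{n,j}(t) = i\vt_j(t) + i\lmd_n^{-\se}\hat y(\lmd_n t).$$
Thus $z^{\lmd_n}+\psi_n$ restricted to $\I$ equals $i\vt$ translated rigidly by the single time-dependent vector $i\lmd_n^{-\se}\hat y(\lmd_n t)$. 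This translation drops out of every internal distance, so on $[0,T-1/N_n]$,
$$|z^{\lmd_n}_j+\phi_j+\psi_{n,j}-z^{\lmd_n}_k-\phi_k-\psi_{n,k}| = |i\vt_j-i\vt_k+\phi_j-\phi_k|,\;\; |z^{\lmd_n}_j-z^{\lmd_n}_k|=|iv^{\lmd_n}_j-iv^{\lmd_n}_k|,$$
reducing $\Dl_{U,\I}$ on $[0,T-1/N_n]$ to $\int(\uk(\phi+i\vt)-\uk(iv^{\lmd_n}))\,dt$. Uniform convergence $v^{\lmd_n}\to\vt$ on $[0,T]$ together with the Sundman majorant $\uk(i\vt)(t)\le Ct^{-\se}$ from Proposition \ref{prop: sudman} allows dominated convergence to push this to $\int_0^T(\uk(\phi+i\vt)-\uk(i\vt))\,dt$.

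For $\Dl_K$, differentiating the translation identity gives $\dot z^{\lmd_n}_j+\dot\phi_j+\dot\psi_{n,j} = \dot\phi_j + i(\dot\vt_j+\lmd_n^{1/3}\dot{\hat y}(\lmd_n t))$, with $\dot\phi_j\in\rr$ orthogonal to the imaginary piece, while $\dot z^{\lmd_n}_j = i(\dot v^{\lmd_n}_j+\lmd_n^{1/3}\dot{\hat y}(\lmd_n t))$. The centered-configuration identities $\sum_{j\in\I}\dot\vt_j=\sum_{j\in\I}\dot v^{\lmd_n}_j=0$ make the cross terms containing $\lmd_n^{1/3}\dot{\hat y}$ cancel after summation, so
$$\sum_{j\in\I}\big(|\dot z^{\lmd_n}_j+\dot\phi_j+\dot\psi_{n,j}|^2 - |\dot z^{\lmd_n}_j|^2\big) = \sum_{j\in\I}(\dot\phi_j)^2 + \sum_{j\in\I}\big(\dot\vt_j^2-(\dot v^{\lmd_n}_j)^2\big)$$
pointwise on $[0,T-1/N_n]$. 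Integrating yields $\int\kk(\dot\phi)\,dt+\int(\kk(i\dot\vt)-\kk(i\dot v^{\lmd_n}))\,dt$ over $[0,T-1/N_n]$; the second integral tends to zero by uniform convergence $\dot v^{\lmd_n}\to\dot\vt$ on each compact subset of $(0,T]$ together with the Sundman bound $|\dot v^{\lmd_n}_j(t)|,|\dot\vt_j(t)|\le Ct^{-1/3}$ (uniform in $n$), which gives a square-integrable majorant near $t=0$. The boundary layer $[T-1/N_n,T]$ contributes integrals bounded by $C/N_n + N_n\sup_{[0,T]}|\vt-v^{\lmd_n}|^2$, which vanishes when $N_n\to\infty$ is chosen so that $N_n\sup_{[0,T]}|\vt-v^{\lmd_n}|\to0$ (for instance $N_n=\lfloor\sup_{[0,T]}|\vt-v^{\lmd_n}|^{-1/2}\rfloor$). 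The chief technical obstacle is exactly this lack of uniform convergence of $\dot v^{\lmd_n}$ near the collision $t=0$, where both velocities blow up like $t^{-1/3}$; the uniform-in-$n$ Sundman estimate is precisely what permits the exchange of limit and integral there.
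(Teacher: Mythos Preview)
Your proposal is essentially correct and rests on the same key observation as the paper's proof: the real deformation $\phi$ is orthogonal (in $\cc$) to the purely imaginary paths $iv^{\lmd}$, $z^{\lmd}$, which is what replaces the centered-configuration hypothesis. The paper packages this observation as the pointwise identity
\[
L_{\I}(iv^{\lmd}+\phi)-L_{\I}(iv^{\lmd})=L_{\I}(z^{\lmd}+\phi)-L_{\I}(z^{\lmd}),
\]
observes that this identity is the only place in the argument of \cite[(7.9)]{FT04} where centeredness of $\phi$ was used, and then simply cites \cite{FT04} for everything else. You instead unpack the full blow-up argument explicitly---the translation identity $z^{\lmd_n}+\psi_n=i\vt+i\lmd_n^{-2/3}\hat y(\lmd_n\,\cdot)$, the cancellation of the $\hat y$-cross terms via $\sum_{j\in\I}\dot\vt_j=\sum_{j\in\I}\dot v^{\lmd_n}_j=0$, and the boundary-layer and singular-integral estimates. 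This is the same mechanism, just presented directly rather than by reduction. Two small points: your reference to Proposition~\ref{prop: sudman} for the Sundman majorant is misplaced, since that proposition treats only binary collisions while here $|\I|$ can be arbitrary---the uniform-in-$n$ bounds $|\dot v^{\lmd_n}_j(t)|\le Ct^{-1/3}$ and the analogous potential bounds come from the general cluster-collision asymptotics in \cite{FT04}; and your dominated-convergence step for $\uk(iv^{\lmd_n})\to\uk(i\vt)$ needs not just uniform convergence of $v^{\lmd_n}$ but a uniform lower bound $|v^{\lmd_n}_j-v^{\lmd_n}_k|\ge ct^{2/3}$ on individual pair distances, which follows from the convergence $\sg(\lmd_n)\to\tilde\sg$ to a collision-free central configuration but is worth stating.
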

\begin{proof}	
	If $\phi(t)$ is a centered $\I$-configuration, this result was ready proved in \cite[(7.9)]{FT04}. In their proof this condition was used to show that,
	\begin{equation}
	\label{a0} \lk(iv^{\lmd} + \phi) - \lk(iv^{\lmd}) = \lk(z^{\lmd}+ \phi) - \lk(z^{\lmd}), \;\; \forall \lmd >0.
	\end{equation} 
	In the following, we show \eqref{a0} still holds even when $\phi(t)$ is not centered. It is enough to demonstrate it for $\lmd=1$. The others are the same. 
	
	Recall that $iv_j(t)= z_j(t)-\hat{z}(t)= iy_j(t)-i\hat{y}(t)$, $\forall j \in \I$. Hence
	\begin{equation}
	\label{a1} \uk(iv(t)) = \uk(z), \;\; \uk(\phi(t)+iv(t))  = \uk(\phi(t)+z(t)),
	\end{equation} 
		\begin{align} \label{a2}
	\begin{split}
		2 \kk(\zd) &  = \sum_{j \in \I}|\zd_j|^2 = \sum_{j \in \I}|iy_j|^2= \sum_{j \in \I}|i\dot{v}_j + i\dot{\hat{y}}|^2 \\
		&  = \sum_{j \in \I}|\dot{v}_j|^2 + |\I| |\dot{\hat{y}}|^2 = 2 \kk(i\dot{v})+ |\I| |\dot{\hat{y}}|^2. 
	\end{split}
	\end{align}
	Meanwhile by the fact that $\phi_j(t)  \in \rr$, for any $j \in \I$, we have
	\begin{align}
	\label{a3} \begin{split}
	2 \kk(z +\phi) & = \sum_{j \in \I} |i \dot{v}_j + i \dot{\hat{y}} + \dot{\phi}_j|^2 = \sum_{j \in \I}|i\dot{v}_j + i \hat{y}|^2 + \sum_{j \in \I} |\dot{\phi}_j|^2  \\
	& =\sum_{ j \in \I} |\dot{v}_j|^2 + \sum_{ j \in \I} |\dot{\phi}_j|^2 + |\I| |\dot{\hat{y}}|^2 = \sum_{ j \in \I} |i \dot{v}_j + \dot{\phi}_j|^2 + |\I| |\dot{\hat{y}}|^2\\
	&  = 2 \kk(i\dot{v} + \dot{\phi}) + |\I||\dot{\hat{y}}|^2. 
	\end{split}
	\end{align}
	Combining \eqref{a1}, \eqref{a2} and \eqref{a3}, we get
	$$ \lk(z+\phi) - \lk(iv+\phi) = \lk(z) - \lk(iv) = \ey |\I| |\dot{\hat{y}}|^2. $$
	This establishes \eqref{a0}. The rest of the proof is exactly the same as \cite[(7.9)]{FT04}. We will not repeat the details here. 	
	
\end{proof}

With the above results, now we can give a proof of Lemma \ref{deform1}.  
 \begin{proof}

  [\textbf{Lemma \ref{deform1}}]
  Choose a $T \in (0, S)$. Let $f(t)$ and $\{\lmd_n\}_{n \in \zz^+} \searrow 0$ be defined as above, for $\ep>0$ small, we set $\phi(t)= (\phi_j(t))_{j \in \I}$ as $\phi_j(t) = \ep f(t) \tau_j$, $\forall t \in [0, T]$, $\forall j \in \I$. By Lemma \ref{deform limit}, there is  a sequence of functions $\{ \psi_n \}_{n \in \zz^+}$ defined by \eqref{psi} satisfying
  $$ \lim_{n \to \infty} \ac(z^{\lmd_n} + \phi + \psi_n, T) - \ac(z^{\lmd_n}, T) = \ack(i\vt + \phi, T) - \ack(\vt, T). $$
  
  For each $n$, define a $\ztl^{\lmn} \in H^1([0, S/\lmn], \cc^N)$ as following 
  $$ \ztl^{\lmn}(t) = \begin{cases}
                      z^{\lmn}(t) & \text{ if } t \in [T, S/\lmn], \\
                      z^{\lmn}(t) + \phi(t) + \psi(t) & \text{ if } t \in [0, T].
                     \end{cases} $$
  By the definition of $\ztl^{\lmn}$ and Lemma \ref{deform limit},
  $$\lim_{n \to \infty} \ac(\ztl^{\lmn}, S/\lmn) - \ac(z^{\lmn}, S/\lmn)= \ack(i\vt + \phi, T) - \ack(i\vt, T). $$
  For any $j \in \I$, by \eqref{eq: v tilde ep},  $\vt^{\ep}_j(t)= i\vt_j(t)+\ep f(t) \tau_j= i\vt_j(t)+ \phi_j(t)$. By Lemma \ref{deform0}
  $$ \lim_{n \to \infty} \ac(\ztl^{\lmn}, S/\lmn) - \ac(z^{\lmn}, S/\lmn)= \ack(\vt^{\ep}, T) - \ack(i\vt, T) <0, $$
  so for $n$ large enough
  \begin{equation}
  \label{eq: zt lmn} \ac(\ztl^{\lmn}, S/\lmn) < \ac(z^{\lmn}, \lmn/S). 
  \end{equation}
   
  Now define $\ztl_n \in H^1([0, S], \cc^{N})$ as $\ztl_n(t) = \lmn^{\se} \ztl^{\lmn}(t/\lmn)$. By \eqref{eq: zt lmn}, a straight forward computation shows $\ac(\ztl_n, S) < \ac(z, S)$, for $n$ large enough. Notice that for any $t \in [\lmn T, S]$, $\ztl_n(t) = z(t),$ and for any $t \in [0, \lmn T]$, 
  \begin{align*}
   \ztl_n(t) & = z(t) + \lmn^{\se}\big(\phi(t/\lmn) + \psi (t/\lmn) \big) = z(t) + \lmn^{\se} \ep f(t/\lmn) \tau + \lmn^{\se} \psi(t/\lmn). 
  \end{align*}
  By the definition of $f$ and $\psi$, for $n$ large enough, $\ztl_n$ satisfies all the conditions that are required for $\zel$. This finishes our proof.
 \end{proof}

 \begin{rem}
  Although we only considered the planar problem here. For the general $N$-body problem in $\rr^d$ with $d \ge 2$, if $z(t)$ is solution with an isolated collision, satisfying $z_j(t) \in V$, $\forall t$ and $\forall j \in \N$, where $V$ is a $d-1$ dimensional linear subspace of $\rr^d$. Then local deformation as lemmas \ref{deform1}, \ref{deform2} and \ref{deform3} can be proven similarly, when we only try to deform the path $z(t)$ along the directions orthogonal to $V$.  
 \end{rem}

Now we are ready to prove Lemma \ref{degenerate}.
 
 \begin{proof}
  
  [\textbf{Lemma \ref{degenerate}}] For simplicity, let $z = \zo$. By a contradiction argument, let's assume $z_0(N/2) - z_0(0)=0$. By \eqref{0}, $z_0(t) \equiv 0$, $ \forall t \in \rr$. This means every body stays on the imaginary axis all the time. 

  As a result, $z$ is a minimizer of the action functional $\ac$ among all loops in $\Lmd_N^{D_N} \cap H^1(\rr/N\zz, (i\rr)^N)$ that satisfies the $\om$-topological constraints, which means $z(t)$ satisfies equation \eqref{N body}, whenever it is collision-free. Furthermore since $\ac(z)$ is finite, it only contains isolated collision (see \cite[Section 3.3]{C02} or \cite[Section 5]{FT04}). 
    
  First let's assume $N=2n$. Then $z_0(0) = z_n(0)$, and $z$ has an isolated $\I$-cluster collision, with $\{0, n\} \subset \I$, at $t=0$. Choose a $\tau \in \mf{T}$ with
  \begin{equation}
   \label{eq: tau 1}  \tau_0 = -1, \; \tau_n = 1 \text{ and } \tau_j =0, \;\; \forall j \in \N \setminus \{0, n\}.
  \end{equation} 
  By Lemma \ref{deform1}, there is a $\zel \in H^1([0, 1/2], \cc^N)$, which is a local deformation of $z$, satisfying $\ac(\zel, 1/2) < \ac(z, 1/2)$, for $\ep_0>0$ small enough. Since only the paths of $m_0$ and $m_n$ were deformed, by the properties listed in Lemma \ref{deform1}, $\zel$ satisfies the monotone constraints and as a loop is contained in $\lo$. This gives us a contradiction. 

  For the rest, we assume $N=2n+1$. The proof is more complicate in this case, because although there will always be an isolated collision, which will be shown in a moment, the collision time and masses that are involved in the collision may vary.

  Without loss of generality, let's assume $\om_1=1$. Then 
  \begin{equation}
  \label{eq: y0 0 +}  y_0(0) =0, \;\; y_0(1/2) \ge 0. 
  \end{equation}
  Meanwhile if $\om_j = 1$, $\forall j \in \N \setminus \{0, 1\}$, then
  \begin{equation}
  \label{eq: yn + 0}  y_n(0) \ge 0, \;\; y_n(1/2) =0. 
  \end{equation}
  On the other hand, if there is a $ 1 \le j_0 \le N-2$, such that $w_j=1,\; \forall j \in \{1, \dots, j_0\}$ and $w_{j_0+1}=-1,$ then by \eqref{eq: z0 zj zN-j}, 
  \begin{equation}
  \label{eq: y j0 + -} \begin{cases}
  y_{j_0/2}(0) \ge 0, \; y_{j_0/2}(1/2) \le 0, \;\; & \text{ if } j_0 \text{ is even}, \\
  y_{N-\frac{j_0+1}{2}}(0) \ge 0, \; y_{N-\frac{j_0+1}{2}}(1/2) \le 0, \;\; & \text{ if } j_0 \text{ is odd}. 
  \end{cases}
  \end{equation}
  As $x_j(t) \equiv 0$, $\forall j \in \N$, \eqref{eq: y0 0 +}, \eqref{eq: yn + 0} and \eqref{eq: y j0 + -} show that there must be a $k \in \N \setminus \{0\}$ and $t_0 \in [0, 1/2]$, such that $z_0(t_0) =z_k(t_0)$. In other words, there must be an isolated $\I$-cluster collision at $t=t_0$, with $\{0, k \} \subset \I$. 

  Like above we will use the local deformation lemmas given at the beginning of this section to find a new path $\zel \in H^1([0, 1/2], \cc^N)$ with $\ac(\zel, 1/2) < \ac(z, 1/2)$, for $\ep_0>0$ small enough. However $\zel$ may not satisfies the monotone constraints, which prevents it being a member of $\lo$. Fortunately after some proper modification, if needed, we can always get a $\ztl \in \lo$ with $\ac(\ztl, 1/2) \le \ac(\zel, 1/2) < \ac(z, 1/2)$, which gives us a contradiction. 

  In the following, we will show how to find such a $\ztl$. Depending on the value of $k$ and $t_0$, six different cases need to be considered. During the proof, it will be helpful to keep \eqref{eq: monotone 1} and \eqref{eq: monotone 3} in mind.

  \begin{figure}
  	\centering
  	\includegraphics[scale=0.7]{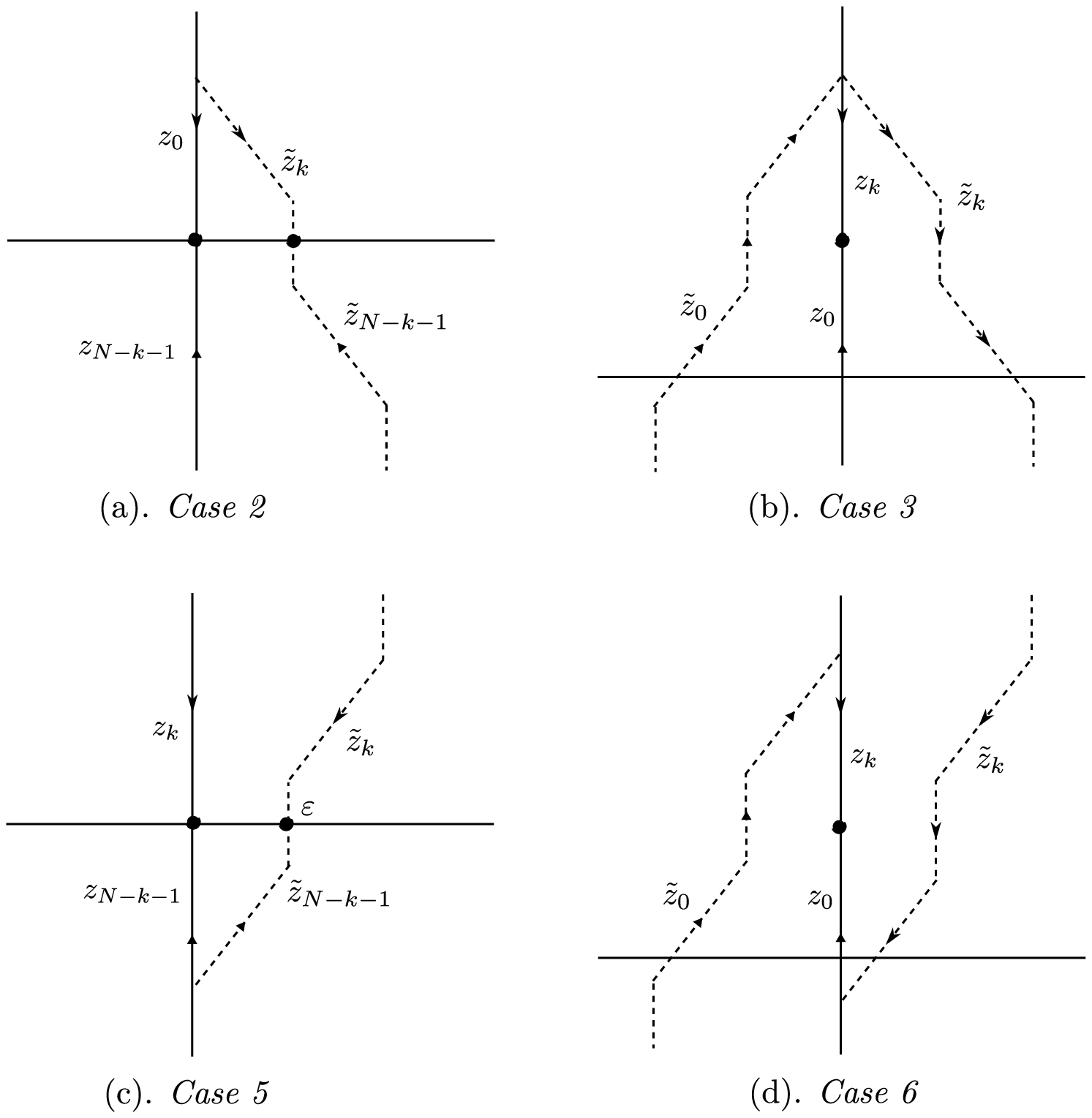}
  	\caption{}
  	\label{fig:C5}
  \end{figure}

  \emph{Case 1}: $ k \in \{1, \dots, n\}$ and $t_0=0$. By \eqref{eq: symm 3}, $z_0(0)=z_k(0)= z_{N-k}(0)$, so $\{0, k, N-k \} \subset \I$. Choose a $\tau \in \mf{T}$ with 
  $$ \tau_0 = -1 \; \text{ and } \; \tau_j = 0, \; \forall j \in \N \setminus \{0\}.$$
  By Lemma \ref{deform1}, for $\ep_0>0$ small enough, there is a $\zel \in H^1([0, 1/2], \cc^N)$, which is a local deformation of $z$, satisfying $\ac(\zel, 1/2) < \ac(z, 1/2)$. Since only the path of $m_0$ is deformed, with the properties listed in Lemma \ref{deform1}, $\zel \in \lo$. We set $\ztl= \zel$. 

  \emph{Case 2}:  $ k \in \{1, \dots, n\}$ and $t_0=1/2$. By \eqref{eq: symm 3}, $z_0(1/2) = z_k(1/2) = z_{N-k-1}(1/2)$, so $\{0, k, N-k-1\} \subset \I$. Choose a $\tau \in \mf{T}$ with 
   $$ \tau_{k}= \tau_{N-k-1} =1 \; \text{ and } \;  \tau_j = 0,  \forall j \in \I \setminus \{k, N -k-1 \}. $$
 By Lemma \ref{deform2}, there is a $\zel \in \in H^1([0, 1/2], \cc^N)$, which is a local deformation of $z$, satisfying $\ac(\zel, 1/2) < \ac(z, 1/2)$. Here only the paths of $m_k$ and $m_{N-k-1}$ are deformed. However $\zel_{N-k-1}(0)< \zel_{N-k-1}(1/2)$ violates \eqref{eq: monotone 1}, so we defined a new path $\ztl(t)$, $t \in [0, 1/2]$ as following:  
  \begin{equation*}
  \ztl(t) = \begin{cases}
       2x^{\ep_0}_j(1/2)-x_j^{\ep_0}(t)+iy_j^{\ep_0}(t), \; & \text{ if } j = N-k-1, \\
       \zel_j(t)+2x_{N-k-1}^{\ep}(1/2), \; & \text{ if } j \in \{k+1, \dots, N-k-2\}, \\
       \zel_j(t),   \; & \text{ if } j \in \N \setminus \{k+1, \dots, N-k-1\}. 
  \end{cases}
  \end{equation*}
  See Figure \ref{fig:C5}(a) for an illuminating picture. By the above definition, $\ac(\ztl, 1/2) \le \ac(\zel, 1/2)$. Furthermore it satisfies the monotone constraints and as a loop is contained in $\lo$.   

  Case $3$:  $ k \in \{1, \dots, n\}$ and $t_0 \in (0, 1/2)$. Then $z_0(t_0)= z_k(t_0)$, so $\{0, k \} \subset \I$. Choose a $\tau \in \mf{T}$ with 
  $$ \tau_0=-1, \; \tau_{k} = 1,  \text{ and } \tau_j =0, \; \forall j \in \N \setminus \{0, k \}. $$
  By Lemma \ref{deform3}, there is a $\zel \in H^1([0, 1/2], \cc^{N})$,  which is a local deformation of $z$, satisfying $\ac(\zel, 1/2) < \ac(z, 1/2)$. Only the paths of $m_0$ and $m_k$ are deformed. However $\zel_0(t_0) < \zel_0(0)$, $\zel_k(1/2) < \zel_k(t_0)$ violate the monotone constraints, so we define a new path $\ztl \in \lo$ as following: 
  $$ \ztl_0(t)= \begin{cases}
  2x^{\ep_0}_0(t_0)-x^{\ep_0}_0(t)+iy^{\ep_0}_0(t), \;\; &\forall t \in [0, t_0], \\
  \zel_0(t), \;\; & \forall t \in [t_0, 1/2],
  \end{cases} $$
  $$ \ztl_{k}(t) = \begin{cases}
  \zel_k(t), \;\; & \forall t \in [0, t_0], \\
  2x^{\ep_0}_k(t_0)-x^{\ep_0}_k(t)+iy^{\ep_0}_k(t), \;\; & \forall t \in [t_0, 1/2],
  \end{cases} $$
  and for any $t \in [0, 1/2]$, 
  $$ \ztl_j(t) = \begin{cases}
  \zel_j(t)+2x^{\ep_0}_k(t_0), \;\; & \text{ if } j \in \{k+1, \dots, N-k-1\}, \\
  z^{\ep_0}_j(t), \;\; & \text{ if } j \in \N \setminus \{0, k, \dots, N-k-1 \}. 
  \end{cases} $$
  See Figure \ref{fig:C5}(b). The rest is the same as in \emph{Case 2}. 
  
  \emph{Case 4}: $k \in \{n+1, \dots, N-1 \}$ and $t_0=0$. By \eqref{eq: symm 3}, $z_0(0)=z_k(0)= z_{N-k}(0)$, so $\{0, k, N-k \} \subset \I$. The rest follows from the same $\tau$ and argument used in \emph{Case 1}. 
  
  \emph{Case 5}: $k \in \{n+1, \dots, N-1 \}$ and $t_0=1/2$. The rest follows from the same $\tau$ and argument used in \emph{Case 2}, when $\ztl(t)$, $t \in [0, 1/2],$ is defined as following (see Figure \ref{fig:C5}(c)):   
  $$ 
  \ztl_j(t)= \begin{cases}
  2x^{\ep_0}_j(1/2)-x^{\ep_0}_j(t)+iy^{\ep_0}_j(t), \;\; & \text{ if } j =k, \\
  \zel_j(t) + 2\zel_k(1/2), \;\; & \text{ if } j \in \{N-k, \dots, k-1 \}, \\
  \zel_j(t), \;\; & \text{ if } j \in \N \setminus \{N-k, \dots, k \}.
  \end{cases}
  $$
     
  \emph{Case 6}: $k \in \{n+1, \dots, N-1 \}$ and $t_0 \in (0, 1/2)$. The rest follows from the same $\tau$ and argument used in \emph{Case 3}, when $\ztl(t)$ should is as following (see Figure \ref{fig:C5}(d)):
 
  $$ \ztl_0(t)= \begin{cases}
  2 x^{\ep_0}_0(t_0)-x^{\ep_0}_0(t)+iy^{\ep_0}_0(t), \;\; & \forall t \in [0, t_0], \\
  \zel_0(t), \;\; & \forall t \in [t_0, 1/2], 
  \end{cases} $$

  $$ \ztl_k(t) = \begin{cases}
  2x^{\ep_0}_k(t_0)-x^{\ep_0}_k(t)+i y^{\ep_0}_k(t), \;\; & \forall t \in [0, t_0], \\
  \zel_k(t), \;\; & \forall t \in [t_0, 1/2], 
  \end{cases} $$
  and for any $t \in [0, 1/2]$, 
  $$ \ztl_j(t) = \begin{cases}
  \zel_j(t)+ 2 x^{\ep_0}_k(t_0), \;\; & \text{ if } j \in \{N-k-1, \dots, k-1 \}, \\
  \zel_j(t), \;\; & \text{ if } j \in \N \setminus \{0, N-k-1, \dots, k \}. 
  \end{cases} $$
  
  This finishes our proof for $N=2n+1$, as well as the entire lemma.

 \end{proof}

\mbox{}

\emph{Acknowledgments}. The author wishes to express his gratitude to Kuo-Chang Chen, Alain Chenciner, Jacques F\'ejoz, Rick Moeckel, Richard Montgomery, Carles Sim\'o for their interests and comments on this work. He thanks Ke Zhang for his continued support and encouragement. Part of the work was done when the author was a visitor at Shandong University. He thanks the hospitality and support of Xijun Hu through NSFC(No. 11425105).

\bibliographystyle{abbrv}
\bibliography{ref-choreography}

\end{document}